\documentclass[9pt]{article}

\usepackage{graphicx,subfig}
\graphicspath{{./pics/}}
\usepackage{bm,color}
\usepackage{amssymb,amsmath,amsthm,amsfonts}
\usepackage[section]{algorithm}
\usepackage{algorithmic}
\usepackage{enumerate}
\usepackage{verbatim}
\usepackage[colorlinks,linktocpage,linkcolor=blue]{hyperref}
\numberwithin{equation}{section}

\setlength\topmargin{-1cm} \setlength\textheight{220mm}
\setlength\oddsidemargin{0mm}
\setlength\evensidemargin\oddsidemargin \setlength\textwidth{160mm}
\setlength\baselineskip{18pt}


\theoremstyle{plain}
\newtheorem{theorem}{Theorem}[section]
\newtheorem{lemma}[theorem]{Lemma}
\newtheorem{proposition}[theorem]{Proposition}

\newtheorem{assumption}{Assumption}[section]
\theoremstyle{theorem}

\newtheorem{remark}{Remark}[section]

\def\beq#1\eeq{\begin{equation}#1\end{equation}}
\def\bal#1\eal{\begin{aligned}#1\end{aligned}}
\def\tu{\tilde u}

\def\al{{\alpha}}

\def\RR{{\mathbb R}}
\def\II{{\mathbb (D)}}
\def\CC{{\mathbb C}}

\newcount\icount

\def\DD#1#2{\icount=#1
  \ifnum\icount<1
  \,_{ 0}\kern -.1em D^{#2}_{\kern -.1em x}
  \else
  \,_{x}\kern -.2em D^{#2}_1
  \fi
}

\def\DDRI#1#2{{\icount=#1
  \ifnum\icount<1
  \,_{-\infty}^{\kern 1em R}\kern -.2em D^{#2}_{\kern -.1em x}
  \else
  \,_{x}^R \kern -.2em D^{#2}_\infty
  \fi
}}

\def\DDR#1#2{{\icount=#1
  \ifnum\icount<1
 _{0}^{ \kern -.1em R} \kern -.2em D^{#2}_{\kern -.1em x}
  \else
 _{x}^{ \kern -.1em R} \kern -.2em D^{#2}_{\kern -.1em 1}
  \fi
}}

\def\DDCI#1#2{{\icount=#1
  \ifnum\icount<1
  \,_{-\infty}^{\kern 1em C}  \kern -.2em D^{#2}_{\kern -.1em x}
  \else
  \,_{x}^C \kern -.2em  D^{#2}_\infty
  \fi
}}

\def\DDC#1#2{{\icount=#1
  \ifnum\icount<1
  \,_{0}^C \kern -.2em  D^{#2}_{\kern -.1em x}
  \else
  \,_{x}^C \kern -.2em D^{#2}_1
  \fi
}}

\def\Hd#1{\widetilde H^{#1}}   
\def\Hdi#1#2{\icount=#1
  \ifnum\icount<1
  \widetilde H_{L}^{#2}\II
  \else
  \widetilde H_{R}^{#2}\II
  \fi
}

\def\Cd#1{\icount=#1
  \ifnum\icount<1
  \widetilde C_{L}
  \else
  \widetilde C_{R}
  \fi
}

\title{A Finite Element Method for the Fractional Sturm-Liouville Problem}
\author {Bangti Jin\thanks{Department of Mathematics and Institute for
Applied Mathematics and Computational Science, Texas A\&M University,
College Station, TX 77843-3368 ({\texttt{btjin,lazarov,pasciak,rundell@math.tamu.edu}})}
\and Raytcho Lazarov\footnotemark[1] \and Joseph Pasciak\footnotemark[1] \and William Rundell\footnotemark[1]}

\begin{document}

\maketitle


\begin{abstract}
In this work, we propose an efficient finite element method for solving fractional Sturm-Liouville problems
involving either the Caputo or Riemann-Liouville derivative of order $\alpha\in(1,2)$ on the unit interval
$(0,1)$. It is based on novel variational formulations of the eigenvalue problem.
Error estimates are provided for the finite element approximations of the eigenvalues. Numerical
results are presented to illustrate the efficiency and accuracy of the method. The results indicate
that the method can achieve a second-order convergence for both fractional derivatives, and can provide accurate approximations to
multiple eigenvalues simultaneously.
\end{abstract}

\section{Introduction}\label{sec:introduction}
We consider the following fractional Sturm-Liouville problem (FSLP): find $u$ and $ \lambda \in \CC$ such that
\begin{equation}\label{eigen-strong}
  \begin{aligned}
   -D_0^\alpha u(x) + qu = \lambda u, & \quad x \in D=(0,1),\\
   u(0)=u(1)=0, &
   \end{aligned}
\end{equation}
where the fractional order $\alpha\in(1,2)$ and $D_0^\al$ refers to the left-sided Caputo or Riemann-Liouville
fractional derivative defined below by \eqref{eqn:Caputo} and \eqref{eqn:Riemann}, respectively. The potential
coefficient $q(x)$ is a (not necessarily nonnegative) measurable function in $ L^\infty\II$.
When $\alpha$ equals $2$, either fractional derivative coincides with
 the usual second-order derivative $u''$
\cite[eq. (2.1.7) and eq. (2.4.55)]{KilbasSrivastavaTrujillo:2006}.

The interest in the FSLP \eqref{eigen-strong} has several  motivations.
The first arises in the study of materials with memory, where the fractional-order derivative in
space is often used to describe anomalous (super-) diffusion processes (see the comprehensive
review \cite{MetzlerKlafter:2000}). The second motivation comes from  the analysis of (space) fractional
wave equation or fractional Fokker-Plank equation. 
In this case the space fractional derivative admits statistical interpretation as the macroscopic
counterpart of Levy motion (as opposed to Brownian motion for standard diffusion process);
see \cite{BensonWheatcraftMeerschaert:2000} for the derivation for solute transport in subsurface
materials. Third, it underlies M. Djrbashian's construction on spaces of analytical functions
\cite[Chapter 11]{Djrbashian:1993}, where the eigenfunctions of problem similar to \eqref{eigen-strong} are
used to construct a certain bi-orthogonal basis. This construction dates at least back to \cite{Dzrbasjan:1970}; see also \cite{Nahusev:1977}.
Mathematically, it also serves as a natural departure point from the classical Sturm-Liouville
problem, for which there is a wealth of deep mathematical results and efficient numerical methods
\cite{ChadanColton:1997}.

The FSLP \eqref{eigen-strong} is closely connected with the two-parameter Mittag-Leffler
function \cite{Podlubny_book}
\begin{equation*}
E_{\alpha,\beta}(z)=\sum_{k=0}^\infty\frac{z^k}{\Gamma(k\alpha+\beta)},\quad z\in\mathbb{C}.
\end{equation*}
It is well known that problem \eqref{eigen-strong} for $q(x)\equiv0$ has infinitely many eigenvalues
$\lambda$ that are the zeros of the Mittag-Lefler
functions $E_{\al,2}(-\lambda)$ and $E_{\alpha,\alpha}(-\lambda)$ for the Caputo and Riemann-Liouville
fractional derivatives, respectively (see \cite{Dzrbasjan:1970,Nahusev:1977} for related discussions),
and the eigenfunctions can also be expressed in terms of Mittag-Leffler functions.
Further, it is known that only a finite number of eigenvalues are real and all other eigenvalues
are genuinely complex. The asymptotic distribution of the eigenvalues is also known \cite{Sedletskii:2000,
JinRundell:2012}. However, computing the zeros of the Mittag-Lefler function
in a stable and accurate way remains a very challenging task (in fact, evaluating the Mittag-Lefler function
$E_{\alpha,\beta}(z)$ to a high accuracy is already highly nontrivial \cite{Seybold:2008}).

Al-Mdallal \cite{AlMdallal:2009} presented a numerical scheme for the FSLP with a Riemann-Liouville
derivative based on the Adomian decomposition method.  However, there is no mathematical analysis, e.g.,
convergence and error estimates, of the numerical scheme, and it can only locate one eigenvalue each
time. In \cite{JinRundell:2012}, by reformulating the FSLP with a Caputo derivative as an initial value
problem, a Newton type method was proposed for computing eigenvalues. However, since it generally
involves many solves of (possibly very stiff) fractional ordinary differential equations, the method
is expensive. Hence, there seems no fast, accurate and yet justified algorithm for the FSLP
available in the literature. In this work, we present and analyze a finite element method (FEM) for
computing the eigenvalues and eigenfunctions in problem \eqref{eigen-strong}. It is especially suited
to the case with a general potential $q$, and can provide accurate approximations to multiple eigenvalues
simultaneously with provable error estimates. We believe that the scheme provides a valuable tool for
studying refined analytical properties, e.g., asymptotics, bifurcation, and interlacing property, of
the FSLP \eqref{eigen-strong}, such as those listed in \cite{JinRundell:2012} for the Caputo derivative.

In order to apply the FEM, we need the weak formulation of problem \eqref{eigen-strong}. This itself is not
a straightforward task and has been addressed in a reasonable way only very recently \cite{ErvinRoop:2006,
JinLazarovPasciak:2013a}. We introduce the bilinear form $a(u,v): U \times V \to \CC$ for suitable solution
and test spaces $U$ and $V$ in Section \ref{sec:prelim}. In either case, the bilinear form $a(\cdot,\cdot)$
is nonsymmetric. While for the Riemann-Liouville case we can take $U=V=\Hd {\alpha/2}\II$ (see Section
\ref{ssec:def} below for the definition), the spaces are different for the Caputo case. Then the weak formulation
of the eigenvalue problem \eqref{eigen-strong} reads: find $u \in U$ and $\lambda \in \CC$ such that
\begin{equation}\label{eigen-weak}
a(u,v)=\lambda (u,v) \mbox{    for all } v \in V.
\end{equation}
The finite element approximation of problem \eqref{eigen-weak} is introduced in Section \ref{sec:fem}.
Specifically, we define finite dimensional subspaces $U_h \subset U$ and $V_h \subset V$,
and seek an approximation $u_h \in U_h$ and $\lambda_h \in \CC$ such that
\begin{equation}\label{eigen-weak-h}
a(u_h,v)= \lambda_h (u_h,v) \mbox{    for all  } v \in V_h.
\end{equation}
The main theoretical result of the paper is stated in Theorem \ref{eigencval-error}, i.e., $| \lambda
- \lambda_h | \le Ch^r$, where $h$ denotes the mesh size. Provided that the eigenvalue $\lambda$ is
simple, the exponent $r$ is given by $r<\al-1$ for the Riemann-Liouville case, while for the Caputo
case if $q\in \Hd s\II\cap L^\infty\II$, $s\in[0,1]$ and $\alpha+s>3/2$, then $r<\min(\alpha+s,2)-1/2$.

The rest of the paper is organized as follows. In Section \ref{sec:prelim}, we describe the variational
formulations of the source problem, and recall relevant regularity results on the variational solutions.
Then in Section \ref{sec:fem}, we introduce the finite element method. We shall discuss details for its
efficient implementation and provide rigorous error bounds. Readers who are not interested in the
technical derivations may simply skip Sections \ref{ssec:reg} and \ref{ssec:FEM-eigen}. In Section
\ref{sec:numerics}, we present extensive numerical experiments to illustrate the convergence behavior
and efficiency of the method, and briefly discuss possible extensions and its application in the study
of fractional Sturm-Liouville problems.

\section{Variational formulations for source problem}\label{sec:prelim}

To derive the finite element method, we need the variational formulations of the
following source problem
\begin{equation}\label{strongp}
  \begin{aligned}
    -D_0^\alpha u(x) + qu & = f, \quad x\in (0,1),\\
     u(0)&=u(1)=0,
  \end{aligned}
\end{equation}
where $f\in L^2\II$ or suitable Sobolev space. We only give an informal derivation here, and refer interested
readers to \cite{JinLazarovPasciak:2013a} for rigorous justifications. Also we recall the smoothing properties
of the source problem, which will be essential for the error analysis in Section \ref{sec:fem}.

\subsection{Notation for functional spaces}\label{ssec:def}
We first introduce notation for fractional order Sobolev spaces. Since the spectrum of the fractional
differential operator $-D_0^\alpha+q$ lies in the complex plane $\CC$, we need to consider complex-valued
functions. The norms in $L^2\II$ and $H^1\II$ are defined through the inner product $(u,v) = \int_0^1 u(x)
\bar v(x) dx$, where $\bar{v}(x)$ refers to the complex conjugate of $v(x)$ and $(u,v)_{H^1\II} =  ( u,v)
+ (u^\prime, v^\prime)$. The fractional order Sobolev spaces $H^\beta\II$, $\beta$ non-integer, are defined
through the real method of interpolation, and the norm is denoted by $\|\cdot\|_{H^\beta\II}$. For any
$\beta\ge 0$, we denote $\Hd \beta\II$ to be the set of functions in $H^\beta\II$ whose extension by zero
to $\RR$ are in $H^\beta(\RR)$ \cite{AdamsFournier:2003}, and $\Hdi 0 \beta$ to be the set of functions $u$
on $D$ whose extension by zero $\tilde{u}$ are in $H^\beta(-\infty,1)$. For $u\in \Hdi 0 \beta$, we set
$\|u\|_{\Hdi 0\beta}:=\|\tu\|_{H^\beta(-\infty,1)}$. Let $\Cd0^\beta\II$ denote the set of functions in
$v\in C^\infty[0,1]$ satisfying $v(0)=v^\prime(0)=\ldots=v^{(k)}(0)=0$ for $k\le\beta-1/2$. Throughout
we use $c$ to denote a generic constant, which may change at different occurrences, but it is always
independent of the mesh size $h$, and $c_\alpha$ to denote a nonzero constant only depending on $\alpha$.

\subsection{Fractional derivatives and integrals}
We first recall the definition of Caputo and  Riemann-Liouville fractional derivatives. For any positive
non-integer  real number $\beta$ with $n-1 < \beta < n$, the (formal) left-sided Caputo fractional derivative
of order $\beta$ is defined by (see, e.g., \cite[pp. 92]{KilbasSrivastavaTrujillo:2006}, \cite{Podlubny_book})
\begin{equation}\label{eqn:Caputo}
  \DDC 0 \beta u = {_0\hspace{-0.3mm}I^{n-\beta}_x}\bigg(\frac {d^nu} {d
    x^n }\bigg)
\end{equation}
and the (formal) left-sided Riemann-Liouville fractional derivative of order $\beta$
is defined by \cite[pp. 70]{KilbasSrivastavaTrujillo:2006}:
\begin{equation}\label{eqn:Riemann}
  \DDR0\beta u =\frac {d^n} {d x^n} \bigg({_0\hspace{-0.3mm}I^{n-\beta}_x} u\bigg) .
\end{equation}
Here $_0\hspace{-0.3mm}I^{\gamma}_x$ for $\gamma>0$
is the left-sided Riemann-Liouville integral operator of order $\gamma$ defined by
\begin{equation*}
 ({\,_0\hspace{-0.3mm}I^\gamma_x} f) (x)= \frac 1{\Gamma(\gamma)} \int_0^x (x-t)^{\gamma-1} f(t)dt,
\end{equation*}
where $\Gamma(\cdot)$ is the Euler's Gamma function defined by $\Gamma(x)=\int_0^\infty t^{x-1}e^{-t}dt$. As
the order $\gamma$ approaches 0, we can identify the operator $_0\hspace{-0.3mm}I_x^\gamma$ with the identity
operator \cite[pp. 65, eq. (2.89)]{Podlubny_book}. The integral operator ${_0\hspace{-0.3mm}I^\gamma_x}$
satisfies a semigroup property, i.e., for $\gamma,\delta> 0$ and smooth $u$, there holds
\cite[Lemma 2.3, pp. 73]{KilbasSrivastavaTrujillo:2006}
\begin{equation} \label{semig}
{_0\hspace{-0.3mm}I_x^{\gamma+\delta}}u={_0\hspace{-0.3mm}I_x^\gamma} {_0\hspace{-0.3mm}I_x^\delta} u.
\end{equation}

The fractional derivatives $\DDC0\beta$ and $\DDR0\beta$
are well defined for functions in $C^n[0,1]$ and are related to each other by the formula (cf.
\cite[pp. 91, eq. (2.4.6)]{KilbasSrivastavaTrujillo:2006})
\begin{equation} \label{eqn:rl=c}
 \DDC0\beta u =  {\DDR0\beta u}-\sum_{k=0}^{n-1} \frac
{u^{(k)}(0)} {\Gamma(k-\beta  +1)} x^{k-\beta}.
\end{equation}

The right-sided versions of fractional-order integrals and derivatives are defined analogously, i.e.,
\begin{equation*}
  ({_x\hspace{-0.3mm}I^\gamma_1} f) (x)= \frac 1{\Gamma(\gamma)}\int_x^1 (t-x)^{\gamma-1}f(t)\,dt,
\end{equation*}
and
\begin{equation*}
\begin{aligned}
  \DDC1\beta u = (-1)^n {_x\hspace{-0.3mm}I^{n-\beta}_1}\bigg(\frac {d^nu} {d x^n }\bigg),
\quad \quad  \DDR1\beta u =(-1)^n\frac {d^n} {d x^n} \bigg({_x\hspace{-0.3mm}I^{n-\beta}_1} u\bigg) .
\end{aligned}
\end{equation*}

We also recall the following useful change of integration order formula \cite[Lemma 2.7, part (a)]{KilbasSrivastavaTrujillo:2006}:
\begin{equation}\label{eqn:intpart}
({_0\hspace{-0.3mm}I_x^\beta} \phi,\varphi) = (\phi,{_x\hspace{-0.3mm}I_1^\beta} \varphi),
\qquad \hbox{ for all } \phi,\varphi\in L^2\II.
\end{equation}

The starting point of the variational formulations is the following theorem \cite{JinLazarovPasciak:2013a}.
\begin{theorem}\label{thm:difint}
The operators $\DDR0\beta $ and $_0I_x^\beta$ satisfy the following properties.
\begin{itemize}
  \item[(a)] The operator $\DDR0\beta$ defined on $\Cd 0^\beta\II$
extends continuously to an operator from $\Hdi 0 \beta$ to $L^2\II$.
  \item[(b)]For any $s,\beta\geq 0$, the operator $_0I_x^\beta $ is bounded from $\Hd s \II$ into $\Hdi 0
{s+\beta}$.
\end{itemize}
\end{theorem}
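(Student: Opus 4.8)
The plan is to push both statements onto the whole real line $\RR$, where the Riemann--Liouville operators become Fourier multipliers, prove the estimates there, and descend back to $D$. For $u$ on $D$ write $\tilde u$ for its extension by zero (to $\RR$ when measuring in $\Hd s\II$, to $(-\infty,1)$ when measuring in $\Hdi 0\beta$), so that $\|u\|_{\Hd s\II}=\|\tilde u\|_{H^s(\RR)}$ and $\|u\|_{\Hdi 0\beta}=\|\tilde u\|_{H^\beta(-\infty,1)}$ by definition. The interval integral ${}_0I_x^{\beta}$ is the restriction to $(0,1)$ of the whole-line left-sided operator $I_+^\beta$ (convolution with $x_+^{\beta-1}/\Gamma(\beta)$), whose Fourier symbol is $(i\xi)^{-\beta}$ of modulus $|\xi|^{-\beta}$; similarly the whole-line derivative $D_+^\beta=\tfrac{d^n}{dx^n}I_+^{\,n-\beta}$, $n=\lceil\beta\rceil$, has symbol $(i\xi)^\beta$ of modulus $|\xi|^\beta$. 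Because $\tilde u$ is supported in $[0,\infty)$, these nonlocal operators, computed from $\tilde u$, agree on $(0,1)$ with the interval operators applied to $u$.

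For part (a) I would argue by continuity from the smooth class. Using that $\Cd0^\beta\II$ is dense in $\Hdi 0\beta$, it suffices to establish $\|\DDR0\beta u\|_{L^2\II}\le c\|u\|_{\Hdi 0\beta}$ for $u\in\Cd0^\beta\II$ and then extend. Pick a bounded extension operator $E:H^\beta(-\infty,1)\to H^\beta(\RR)$ and set $U=E\tilde u$, so $U=\tilde u$ on $(-\infty,1)$ and $\|U\|_{H^\beta(\RR)}\le c\|u\|_{\Hdi 0\beta}$. Since $\DDR0\beta u(x)$ for $x\in(0,1)$ only involves the values of $u$ on $(0,x)\subset(0,1)$, where $U$ coincides with $\tilde u$, we have $\DDR0\beta u=D_+^\beta U$ on $(0,1)$. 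Plancherel's theorem then gives $\|\DDR0\beta u\|_{L^2\II}\le\|D_+^\beta U\|_{L^2(\RR)}=\big\||\xi|^\beta\widehat U\big\|_{L^2(\RR)}\le\|U\|_{H^\beta(\RR)}$, since $|\xi|^\beta\le(1+\xi^2)^{\beta/2}$; chaining the bounds yields (a). The only delicate point is that $D_+^\beta U$ is a composition whose intermediate step $I_+^{\,n-\beta}U$ carries the singular low-frequency symbol $(i\xi)^{-(n-\beta)}$, yet the composite symbol $(i\xi)^\beta$ is regular at the origin, so the multiplier identification is legitimate.

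For part (b) the same reduction shows that the zero-extension of ${}_0I_x^{\beta} u$ to $(-\infty,1)$ is exactly $(I_+^\beta\tilde u)\big|_{(-\infty,1)}$, and the sought gain of $\beta$ derivatives is produced by the decay $|\xi|^{-\beta}$ of the symbol at high frequency. The main obstacle is the singularity of $(i\xi)^{-\beta}$ at $\xi=0$: because $\widehat{\tilde u}(0)=\int_0^1 u$ need not vanish, $I_+^\beta\tilde u$ decays only like $x^{\beta-1}$ as $x\to+\infty$ and fails to belong to $H^{s+\beta}(\RR)$ once $\beta\ge 1/2$. This is precisely why the target is the one-sided space $\Hdi 0{s+\beta}$: its norm measures only the zero-extension on $(-\infty,1)$ and therefore disregards the slowly decaying tail of $I_+^\beta\tilde u$ on $(1,\infty)$.

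To exploit this I would bound $\|w\|_{H^{s+\beta}(-\infty,1)}$, $w=(I_+^\beta\tilde u)|_{(-\infty,1)}$, after splitting the symbol as $(i\xi)^{-\beta}=(i\xi)^{-\beta}(1-\theta(\xi))+(i\xi)^{-\beta}\theta(\xi)$, with $\theta$ a smooth cutoff equal to $1$ near $\xi=0$, yielding $I_+^\beta\tilde u=v_{\mathrm{hi}}+v_{\mathrm{lo}}$. The high-frequency multiplier has modulus $\lesssim(1+\xi^2)^{-\beta/2}$, so $v_{\mathrm{hi}}\in H^{s+\beta}(\RR)$ with $\|v_{\mathrm{hi}}\|_{H^{s+\beta}(\RR)}\le c\|u\|_{\Hd s\II}$, and its restriction to $(-\infty,1)$ is controlled. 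The low-frequency part $v_{\mathrm{lo}}$ is the convolution of the \emph{compactly supported} $\tilde u$ with a smooth kernel, hence $C^\infty$ on $\RR$ with all seminorms on $(-1,1)$ bounded by $\|\tilde u\|_{L^1}\le c\|u\|_{\Hd s\II}$; moreover $v_{\mathrm{lo}}=-v_{\mathrm{hi}}$ on $(-\infty,0)$ (where $I_+^\beta\tilde u$ vanishes), so $\|v_{\mathrm{lo}}\|_{H^{s+\beta}(-\infty,1)}$ is controlled as well. Adding the two contributions gives $\|w\|_{H^{s+\beta}(-\infty,1)}\le c\|u\|_{\Hd s\II}$ and hence (b). I expect this low-frequency bookkeeping --- confirming, via the compact support of $\tilde u$, that $v_{\mathrm{lo}}$ is harmless on $(-\infty,1)$ --- to be the only genuine difficulty; the high-frequency estimate and the whole of (a) are direct symbol computations.
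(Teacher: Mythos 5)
The paper itself contains no proof of Theorem~\ref{thm:difint}: it is quoted verbatim from the companion work \cite{JinLazarovPasciak:2013a}, so there is no in-paper argument to compare against. Your proposal is, in substance, the argument of that reference: extend by zero, use causality of the left-sided operators to replace the interval operators by whole-line convolution operators, read off the symbols $(i\xi)^{\pm\beta}$, and for the integral operator separate low and high frequencies to neutralize the singularity of $(i\xi)^{-\beta}$ at the origin --- which, as you correctly observe, is exactly why the target in (b) is the one-sided space $\Hdi 0{s+\beta}$ rather than $\Hd{s+\beta}\II$. I consider the proof essentially correct. Two points deserve to be written out rather than asserted. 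First, in (a) the extension $U=E\tilde u$ is only an $H^\beta(\RR)$ function, so you should either define $D_+^\beta$ on $H^\beta(\RR)$ directly as the multiplier operator and check on a dense smooth class that it agrees on $(0,1)$ with the classical $\tfrac{d^n}{dx^n}{_0I_x^{n-\beta}}$, or mollify; this is routine but not free. Second, in (b) for $\beta\ge 1$ the cut symbol $(i\xi)^{-\beta}\theta(\xi)$ is not locally integrable at $\xi=0$ and must be interpreted as a compactly supported distribution; its inverse Fourier transform is then smooth but only polynomially bounded (Paley--Wiener--Schwartz), not bounded. This still suffices, because you only invoke pointwise bounds on $v_{\mathrm{lo}}$ over the bounded window $(-1,1)$ and recover $(-\infty,0)$ from $v_{\mathrm{lo}}=-v_{\mathrm{hi}}$ there, but the patching of the two overlapping regions into a single $H^{s+\beta}(-\infty,1)$ bound (via a partition of unity) should be stated explicitly. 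Neither point is a gap in the idea; both are bookkeeping that a careful write-up must include.
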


It follows directly from Theorem \ref{thm:difint}(b) that for $0<\beta<1$, the operator $\DDC0\beta$
extends continuously from $H^1\II$ into $H^{1-\beta}\II$. Further, by \eqref{eqn:rl=c} and
Theorem~\ref{thm:difint}(a), the following lemma holds \cite[Lemma 4.1]{JinLazarovPasciak:2013a}.

\begin{lemma} \label{lem:caprl}
For $u\in \Hdi 0 1$  and $\beta\in (0,1)$,
$\DDR 0\beta u = {_0I_x^{1-\beta}}(u^\prime):=\DDC 0\beta u$.
\end{lemma}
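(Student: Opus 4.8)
The plan is to exploit the bridging identity \eqref{eqn:rl=c} relating the two derivatives and then to discard the single boundary term it produces. Since $\beta\in(0,1)$ forces $n=1$, the right-hand side ${_0I_x^{1-\beta}}(u')$ is, by definition \eqref{eqn:Caputo}, exactly $\DDC0\beta u$; thus the real content of the lemma is the equality $\DDR0\beta u=\DDC0\beta u$. By \eqref{eqn:rl=c} with $n=1$, the two differ only by the term $u(0)\,x^{-\beta}/\Gamma(1-\beta)$, so everything reduces to showing that this term is absent for $u\in\Hdi 0 1$.

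First I would treat smooth functions. Let $\mathcal D=\{v\in C^\infty[0,1]:v(0)=0\}$. For $v\in\mathcal D$ the formula \eqref{eqn:rl=c} applies verbatim (it needs only $v\in C^1[0,1]$), and because $v(0)=0$ its boundary term vanishes; hence $\DDR0\beta v={_0I_x^{1-\beta}}(v')$ holds pointwise on $\mathcal D$. The condition $v(0)=0$ is no accident: an element of $\Hdi 0 1$ has its zero-extension in $H^1(-\infty,1)$, and since one-dimensional $H^1$ functions are continuous, continuity across $x=0$ forces the trace $u(0)=0$. In particular $\Hdi 0 1=\{u\in H^1\II:u(0)=0\}$, and $\mathcal D$ is dense in it.

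Next I would upgrade the identity from $\mathcal D$ to all of $\Hdi 0 1$ by a continuity/density argument, which is where Theorem \ref{thm:difint} does the work. The left-hand map is bounded from $\Hdi 0 1$ into $L^2\II$: Theorem \ref{thm:difint}(a) gives $\DDR0\beta:\Hdi 0 \beta\to L^2\II$ bounded, and since $\beta<1$ there is a continuous embedding $\Hdi 0 1\hookrightarrow\Hdi 0 \beta$. The right-hand map is likewise bounded, since $u\mapsto u'$ sends $\Hdi 0 1$ into $L^2\II=H^0\II$, and Theorem \ref{thm:difint}(b) (with $s=0$) makes ${_0I_x^{1-\beta}}:H^0\II\to\Hdi 0 {1-\beta}\hookrightarrow L^2\II$ bounded, so the composition is bounded. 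Two bounded operators from $\Hdi 0 1$ to $L^2\II$ that coincide on the dense subspace $\mathcal D$ coincide everywhere, which is precisely the assertion.

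The computation on $\mathcal D$ is elementary, so the only genuine obstacle is the soft-analysis step: verifying the correct mapping properties of both operators on $\Hdi 0 1$ (supplied by Theorem \ref{thm:difint} together with the embeddings $\Hdi 0 1\hookrightarrow\Hdi 0 \beta$ and $\Hdi 0 {1-\beta}\hookrightarrow L^2\II$) and the density of $\mathcal D$ in $\Hdi 0 1$. I expect the density statement and the boundedness of these embeddings to require the most care, though both are standard facts for these fractional Sobolev spaces.
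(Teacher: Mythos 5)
Your argument is correct and is essentially the route the paper itself indicates: it derives the lemma from the bridging identity \eqref{eqn:rl=c} (whose single boundary term vanishes because the trace $u(0)$ is zero for $u\in\Hdi 0 1$) together with the continuity supplied by Theorem \ref{thm:difint}, extended from smooth functions by density. Your write-up simply fills in the density and boundedness details that the paper delegates to the cited reference.
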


\subsection{Derivation of variational formulations}
Now we can derive the variational formulation constructively. We shall first construct the strong solutions
(in the case of $q=0$), and then verify that the strong solution satisfies a certain variational formulation.
The well-posedness of the variational formulations will be discussed in Section \ref{ssec:reg}.

We first consider the Riemann-Liouville case. For $f\in L^2\II$, we set $g={_0I_x^\alpha} f \in \Hdi 0\alpha$.
By Theorem~\ref{thm:difint}, the fractional derivative $\DDR0\alpha g$ is well defined. Now by the semigroup
property \eqref{semig}, we deduce
\begin{equation*}
{_0I_x^{2-\alpha}} g = {_0I_x^2}f\in \Hdi 0 2.
\end{equation*}
It is straightforward to check that $({_0I_x^2}f)^{\prime\prime}=f$ holds for smooth $f$ and hence also on
$L^2\II$ by a density argument. This implies that $\DDR0\alpha g=f$. We thus find that
\begin{equation} \label{strongrl}
u= -{_0I_x^\alpha} f +({_0I_x^\alpha} f)(1) x^{\alpha -1}
\end{equation}
is a solution of \eqref{strongp} in the Riemann-Louiville case (for $q=0$) since it satisfies the boundary
conditions and $\DDR0\alpha x^{\alpha-1}=(c_\alpha x)^{\prime\prime}=0$. Taking $v\in C_0^\infty\II$, Lemma~\ref{lem:caprl} implies
\begin{equation}\label{rlvar}
\begin{aligned}
  (\DDR0\alpha u,v) &= -\big(({_0I^{2-\alpha}_x} g)^{\prime\prime},v\big) =\big(({_0I^{2-\alpha}_x} g)^{\prime},v^\prime\big) \\
   &=\big({\DDR0{\alpha-1}} g,v^\prime\big) = ({_0I_x^{2-\alpha}}g',v'),
\end{aligned}
\end{equation}
where we have used the identity $\DDR0{\alpha} x^{\alpha-1}=0$ in the first step. Now the semigroup
property \eqref{semig} and the change of integration order formula \eqref{eqn:intpart} yield
\begin{equation}\label{rl-first}
   (\DDR0\alpha u,v)= \big({_0I^{1-\alpha/2}_x} g',{_xI_1^{1-{\alpha/2}}}v^\prime\big).
\end{equation}
Since $g\in \Hdi0{\alpha}$ and $v\in\Hd 1\II$, we can apply Lemma~\ref{lem:caprl} again to conclude
\begin{equation*}
  (\DDR0\alpha u,v) = -(\DDR0{\alpha/2} g,\ {\DDR1{\alpha/2}}v).
\end{equation*}
Further direct computation shows that $(\DDR0{\alpha/2} x^{\alpha-1},{\DDR1{\alpha/2}}v) = (\DDR0{\alpha-1}x^{\alpha-1},v')=0$.
Consequently,
\begin{equation}\label{adef}
   A(u,v):=-(\DDR0\alpha u,v)=-(\DDR0{{\alpha/2}}u,\,\DDR1{{\alpha/2}}v).
\end{equation}
Thus, $u$ is a solution of the variational problem: Find $u\in U:=\Hd {\alpha/2} \II$ such that
\begin{equation*}
A(u,v)=(f,v),\qquad \hbox{ for all } v\in V= U.
\end{equation*}
When $q\neq 0$, the variational problem becomes
\begin{equation}\label{var:rl}
a(u,v):= A(u,v)+(qu,v)=(f,v),\qquad \hbox{ for all } v\in V.
\end{equation}
We shall show in Theorem~\ref{thm:regrl} below that, under further assumptions,
there is a unique weak solution $u$ to \eqref{var:rl} when $q\in
L^\infty\II$.  In this case, we set $g={_0I_x^\alpha}(f-qu)$ and find that
\begin{equation}\label{strongrl2}
u(x)= -g(x) +g(1) x^{\alpha -1}
\end{equation}
is the unique solution of the variational equation.

The case of the Caputo derivative is similar and we only illustrate the derivation when $q=0$. Again we
first construct a ``strong'' solution. By the identity \eqref{eqn:rl=c}, both $\DDC0\alpha$ and
$\DDR0\alpha$ extend continuously to bounded operators on $\Hdi 0 {\alpha+\beta}$ for any
$\beta$ such that $\alpha+\beta>3/2$ and since they coincide on $\Cd0^{\alpha+\beta}\II$,
they coincide on $\Hdi 0 {\alpha+\beta}$. Thus if $f\in \Hd \beta\II$ then Theorem~
\ref{thm:difint}(b) implies that the function $g={_0I_x^\alpha f}$ is in  $\Hdi 0 {\alpha+\beta}$ and hence
\begin{equation*}
u(x) = -g(x) + g(1) x
\end{equation*}
represents a strong solution in the Caputo case. It is easy to see that $g(1)=u'(0)$.
Proceeding as in \eqref{rlvar}, for a smooth $v$ with $v(1)=0$, there holds
\begin{equation*}
\begin{aligned}
  \big(\DDC0\alpha u,v\big) &= -\big(\DDR0\alpha g,v\big)
  =\big(\DDR0{\alpha-1} g,v'\big)\\
&=\big(\DDR0{\alpha/2} u,\DDR1{\alpha/2} v\big) -u'(0)
 \big({\DDR0{\alpha-1}} x,v'\big).
\end{aligned}
\end{equation*}
The last term on the right hand side involves $u'(0)$, which is not allowed in a variational formulation in $U=\Hd{\alpha/2}\II$,
and can be removed by requiring the test function $v$ to satisfy $\big(\DDR0{\alpha-1} x,v^\prime\big)=c_\alpha
(x^{1-\alpha},v)=0.$ This leads to the same bilinear form as in the Riemann-Liouville case
except in the Caputo case, $V=\{v\in H^{\alpha/2}\II :\ v(1)=0, (x^{1-\alpha},v)=0\}.$

\subsection{Stability of the variational formulations}\label{ssec:reg}
Now we briefly discuss the stability of the variational formulations: find $u\in U$ such that
\begin{equation}\label{eqn:bilinform}
  a(u,v) = (f,v), \quad \forall v\in V,
\end{equation}
where $f$ belongs to either $L^2\II$ or suitable Sobolev space. Throughout we make the
following assumption on the bilinear form $a(u,v)$.
\begin{assumption}\label{ass:bilin}
The bilinear form $a(u,v)$  on $U\times V$ satisfies
\begin{itemize}
 \item[{$\mathrm{(a)}$}]The problem of finding $u \in U$ such that $a(u,v)=0$ for all $v \in V$
           has only the trivial solution $u\equiv 0$.
 \item[{$(\mathrm{a}^\ast)$}] The problem of finding $v \in V$ such that $a(u,v)=0$ for all $u \in U$
    has only the trivial solution $v\equiv 0$.
\end{itemize}
\end{assumption}

\begin{remark}
It can be verified \cite{ErvinRoop:2006,JinLazarovPasciak:2013a}
that in case of $q=0$, the bilinear form $a(u,v)$ is in fact coercive on $\Hd {\alpha/2} \II$ and hence
satisfies Assumption \ref{ass:bilin} in the Riemann-Liouville case. Hence it holds also for any bounded
nonnegative potential $q$. In the Caputo case, it can be verified directly that the
bilinear satisfies the assumption \ref{ass:bilin} for $q=0$, but for a general potential it is unclear.
\end{remark}

Then we have the following existence and stability result in the
case of the Riemann-Liouville derivative.

\begin{theorem}[Riemann-Liouville derivative] \label{thm:regrl}
Let Assumption \ref{ass:bilin} hold and $q\in L^\infty\II$. Then for any $f\in L^2\II$,
the variational problem \eqref{eqn:bilinform} has a unique weak solution $u\in U$. Further,
$u\in H^{\alpha-1+\beta}\II\cap \Hd{{\alpha/2}}\II$ for any $\beta\in[0,1/2)$ and satisfies
\begin{equation*}
  \|u \|_{H^{\al -1 +\beta}\II} \le c \|f\|_{L^2\II}.
\end{equation*}
\end{theorem}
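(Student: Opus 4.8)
The plan is to split the statement into well-posedness and regularity, handling the first by a Gårding/compactness argument built on the coercivity of the principal part $A(\cdot,\cdot)$, and the second by bootstrapping through the explicit solution representation \eqref{strongrl2}.

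For well-posedness I would work on $U=V=\Hd{\alpha/2}\II$. As recorded in the Remark, the form $A(u,v)=-(\DDR0{\alpha/2}u,\,\DDR1{\alpha/2}v)$ of \eqref{adef} is bounded and coercive, so $a(u,v)=A(u,v)+(qu,v)$ is bounded and satisfies the Gårding inequality
\begin{equation*}
\mathrm{Re}\,a(u,u)\ge c_0\|u\|_{\Hd{\alpha/2}\II}^2-\|q\|_{L^\infty\II}\|u\|_{L^2\II}^2 .
\end{equation*}
Since $\alpha/2>1/2$, the embedding $\Hd{\alpha/2}\II\hookrightarrow L^2\II$ is compact, so the perturbation $(qu,v)$ induces a compact operator and the operator associated with $a$ is a compact perturbation of an isomorphism, hence Fredholm of index zero on $U$. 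Assumption \ref{ass:bilin}(a) (together with ($\mathrm{a}^\ast$) for the adjoint) forces the kernel to be trivial, so the operator is boundedly invertible. This gives a unique $u\in U$ with the a priori bound $\|u\|_{\Hd{\alpha/2}\II}\le c\|f\|_{L^2\II}$; in particular $u\in\Hd{\alpha/2}\II$, which is the second membership in the claim.

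With $u\in L^2\II$ in hand, I would bootstrap once through \eqref{strongrl2}. Set $h:=f-qu$; since $q\in L^\infty\II$ and $\|u\|_{L^2\II}\le c\|f\|_{L^2\II}$, we have $h\in L^2\II$ with $\|h\|_{L^2\II}\le c\|f\|_{L^2\II}$. Writing $g={_0I_x^\alpha}h$, Theorem \ref{thm:difint}(b) with $s=0$ gives $g\in\Hdi0\alpha\subset H^\alpha\II$ and $\|g\|_{H^\alpha\II}\le c\|h\|_{L^2\II}$, while $\alpha>1/2$ yields $H^\alpha\II\hookrightarrow C[0,1]$, so the point evaluation is bounded and $|g(1)|\le c\|g\|_{H^\alpha\II}\le c\|f\|_{L^2\II}$. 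By \eqref{strongrl2}, $u=-g+g(1)\,x^{\alpha-1}$, and since $\alpha-1+\beta<\alpha$ the smooth part $g$ already lies in $H^{\alpha-1+\beta}\II$ with the desired bound.

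The decisive point is the singular term $x^{\alpha-1}$: I would verify, by a direct fractional-order computation (or the standard threshold for power functions), that $x^{\alpha-1}\in H^s\II$ precisely when $s<(\alpha-1)+\tfrac12=\alpha-\tfrac12$. This is exactly what forces $\beta\in[0,1/2)$, for then $\alpha-1+\beta<\alpha-\tfrac12$ and $\|x^{\alpha-1}\|_{H^{\alpha-1+\beta}\II}<\infty$. Combining the two contributions,
\begin{equation*}
\|u\|_{H^{\alpha-1+\beta}\II}\le\|g\|_{H^{\alpha-1+\beta}\II}+|g(1)|\,\|x^{\alpha-1}\|_{H^{\alpha-1+\beta}\II}\le c\|f\|_{L^2\II},
\end{equation*}
which finishes the argument. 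I expect the main obstacle to be this last fractional Sobolev estimate for $x^{\alpha-1}$ and the identification of the sharp exponent $\alpha-\tfrac12$; the well-posedness part is comparatively routine once the Gårding inequality and the compact embedding $\Hd{\alpha/2}\II\hookrightarrow L^2\II$ are in place.
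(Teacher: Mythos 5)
Your proposal is correct and follows essentially the same route as the paper: unique solvability is obtained from the coercivity of the principal part $A(\cdot,\cdot)$ on $\Hd{\alpha/2}\II$ together with Assumption \ref{ass:bilin} (the paper packages this as an inf-sup condition quoted from \cite{JinLazarovPasciak:2013a}, which is established there by the same compact-perturbation/Fredholm argument you spell out), and the regularity is then read off from the splitting \eqref{strongrl2} into $-g\in\Hdi0\alpha$ and the singular part $g(1)x^{\alpha-1}$, whose membership in $H^{s}\II$ precisely for $s<\alpha-\tfrac12$ is what restricts $\beta$ to $[0,1/2)$. The only point worth flagging is notational: you reuse the letter $h$ for $f-qu$, which collides with the paper's mesh-size symbol.
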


\begin{proof}
We only sketch the idea and refer the full details to \cite{JinLazarovPasciak:2013a}.
Since the bilinear form $A(\cdot,\cdot)$ coerces the norm $\Hd  {{\alpha/2}} \II$,
Assumption \ref{ass:bilin}(a) implies that the bilinear form $a(\cdot,\cdot)$ satisfies
the inf-sup condition \cite[Theorem 4.3]{JinLazarovPasciak:2013a}. This and Assumption
\ref{ass:bilin}$(\mathrm{a}^\ast)$ imply that \eqref{eqn:bilinform} has a unique weak
solution $u\in\Hd{\alpha/2}\II$.  The unique solution $u$ satisfies \eqref{strongrl2} which
exhibits $u$ as a sum of functions, $-g\in \Hdi0 {\alpha} $ and $g(1) x^{\alpha-1} \in
\Hdi0{\alpha-1+\beta}$, for any $\beta \in [0,1/2)$.  This completes the sketch of the proof.
\end{proof}

\begin{remark}\label{rmk:regrl}
In general, the best possible regularity of the solution to \eqref{strongp} with a Riemann-Liouville fractional
derivative is $H^{\alpha-1+\beta}\II$ for any $\beta\in[0,1/2)$, due to the presence of the singular
term $x^{\alpha-1}$. The only possibility of an improved regularity is the case $({_0I_x^\alpha f})(1)=0$ (in case of $q=0$).
\end{remark}

We shall need also the adjoint problem in the Riemann-Liouville case:
given $f\in L^2\II$, find $w\in V$ such that
\begin{equation} \label{eqn:bilinadj}
   a(v,w)= ( v,f), \qquad \hbox{ for all } v\in U.
\end{equation}
Then there exists a unique solution $w\in U$ to the adjoint problem. Indeed, Assumption \ref{ass:bilin} implies
that the inf-sup condition for the adjoint problem holds. For $q=0$ and a right hand side $f\in L^2\II$, we have
\begin{equation*}
w= -{_xI_1^\alpha} f(x) +  ({_xI_1^\alpha} f)(0)(1-x)^{\alpha -1}.
\end{equation*}
This implies a similar regularity pickup, i.e., $w\in H^{\alpha-1+\beta}\II$.
Now we can repeat the arguments in the proof of Theorem \ref{thm:regrl} for a
general $q$ to deduce the regularity pick-up of the adjoint solution $w$.
Thus we have the following result.

\begin{theorem}\label{thm:regadjrl}
Let Assumption \ref{ass:bilin} hold and $q\in L^\infty\II$. Then for any $f\in L^2\II$,
there exists a unique weak solution $w\in \Hd{{\alpha/2}}\II$ to \eqref{eqn:bilinadj} such that for any
$\beta\in[0,1/2)$ there holds
\begin{equation*}
  \|w\|_{H^{\al-1+\beta}\II} \le c\|f\|_{L^2}.
\end{equation*}
\end{theorem}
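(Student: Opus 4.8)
The plan is to obtain Theorem~\ref{thm:regadjrl} as the mirror image of Theorem~\ref{thm:regrl}, exploiting the reflection $x\mapsto 1-x$ that interchanges the left- and right-sided fractional operators. First I would settle existence and uniqueness. In the Riemann--Liouville case $U=V=\Hd{\alpha/2}\II$ and the form $A(\cdot,\cdot)$ of \eqref{adef} is coercive on $\Hd{\alpha/2}\II$, so Assumption~\ref{ass:bilin} furnishes the inf--sup condition for the transposed form $a(\cdot,\cdot)$: Assumption~\ref{ass:bilin}$(\mathrm{a}^\ast)$ gives injectivity in the $w$ argument, while Assumption~\ref{ass:bilin}(a) supplies the complementary dense-range condition. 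The Babu\v{s}ka--Ne\v{c}as theory then yields a unique $w\in\Hd{\alpha/2}\II$ solving \eqref{eqn:bilinadj}, together with the a priori bound $\|w\|_{\Hd{\alpha/2}\II}\le c\|f\|_{L^2\II}$; this stability estimate is exactly what I will use to close the regularity argument at the end.

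Second, I would establish a representation formula. Taking $v\in C_0^\infty\II$ in \eqref{eqn:bilinadj} and unwinding $A(v,w)=-(\DDR0{\alpha/2}v,\DDR1{\alpha/2}w)$ through the change-of-integration-order identity \eqref{eqn:intpart}, the semigroup property \eqref{semig}, and Lemma~\ref{lem:caprl} applied on the right-sided side, the adjoint variational equation is seen to encode the strong reflected problem $-\DDR1\alpha w+qw=f$ with $w(0)=w(1)=0$. Repeating the construction that produced \eqref{strongrl2}, but with ${_xI_1^\alpha}$ in place of $_0I_x^\alpha$ and the singular function $(1-x)^{\alpha-1}$ (which vanishes at $x=1$ and equals $1$ at $x=0$) in place of $x^{\alpha-1}$, gives
\[
w = -{_xI_1^\alpha}(f-qw) + \big({_xI_1^\alpha}(f-qw)\big)(0)\,(1-x)^{\alpha-1},
\]
which for $q=0$ reduces to the explicit formula quoted before the theorem. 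The only new ingredient is the right-sided analogue of Theorem~\ref{thm:difint}, but this follows verbatim from the left-sided statement under $x\mapsto 1-x$ (the space $\Hdi0\beta$ transforming into $\Hdi1\beta$), so ${_xI_1^\alpha}$ maps $L^2\II=\Hd0\II$ boundedly into $\Hdi1\alpha$.

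Third, the regularity pickup. Since $q\in L^\infty\II$ and $w\in\Hd{\alpha/2}\II\hookrightarrow L^2\II$, the datum $f-qw$ lies in $L^2\II$ with $\|f-qw\|_{L^2\II}\le\|f\|_{L^2\II}+\|q\|_{L^\infty\II}\|w\|_{L^2\II}$. Consequently $-{_xI_1^\alpha}(f-qw)\in\Hdi1\alpha\hookrightarrow H^{\alpha-1+\beta}\II$, while the boundary term is controlled by $|({_xI_1^\alpha}(f-qw))(0)|\le c\|f-qw\|_{L^2\II}$ together with $(1-x)^{\alpha-1}\in H^{\alpha-1+\beta}\II$ for every $\beta\in[0,1/2)$. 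Summing the two contributions and inserting the stability bound $\|w\|_{L^2\II}\le c\|f\|_{L^2\II}$ from the first step gives $\|w\|_{H^{\alpha-1+\beta}\II}\le c\|f\|_{L^2\II}$, as claimed.

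I expect the main obstacle to be the sharp regularity of the singular boundary term $(1-x)^{\alpha-1}$: exactly as in Remark~\ref{rmk:regrl}, it belongs to $H^{\alpha-1+\beta}\II$ for $\beta<1/2$ but no better, and it is what caps the attainable exponent at $\alpha-1+\beta$. Verifying the borderline Sobolev membership (and failure at $\beta=1/2$) through the interpolation/Gagliardo seminorm characterization is the genuinely technical point; the existence, uniqueness, and the bootstrap against $qw$ are routine transcriptions, under reflection, of the Riemann--Liouville source-problem argument of Theorem~\ref{thm:regrl}.
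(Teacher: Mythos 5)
Your proposal is correct and follows essentially the same route as the paper: the paper likewise deduces existence and uniqueness from the inf--sup condition supplied by Assumption~\ref{ass:bilin}, writes down the reflected representation $w=-{_xI_1^\alpha}f+({_xI_1^\alpha}f)(0)(1-x)^{\alpha-1}$ for $q=0$, and then ``repeats the arguments in the proof of Theorem~\ref{thm:regrl}'' with $f-qw$ as the effective datum to get the $H^{\alpha-1+\beta}\II$ pickup. Your treatment is in fact somewhat more explicit than the paper's (which leaves the reflection of Theorem~\ref{thm:difint} and the splitting of the two terms implicit), and your identification of the $(1-x)^{\alpha-1}$ term as the regularity bottleneck matches Remark~\ref{rmk:regrl}.
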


Now we turn to the Caputo case, and have the following regularity result.
\begin{theorem}[Caputo derivative] \label{thm:regcap}
Let Assumption \ref{ass:bilin} hold and $q\in L^\infty\II$. Then for any  $f\in L^2\II$,
there exists a unique weak solution $u\in \Hd{\alpha/2}\II$ to \eqref{eqn:bilinform}.
Further, let $\beta\in[0,1]$ with $\alpha+\beta>3/2$ and $f\in \Hd\beta\II$, and
 $q\in L^\infty\II\cap H_0^\beta\II$. Then the weak solution $u$ solves \eqref{strongp} and
satisfies
\begin{equation*}
  \|u\|_{H^{\alpha+\beta}\II}\leq c\|f\|_{\Hd\beta\II}.
\end{equation*}
\end{theorem}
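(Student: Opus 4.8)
My plan mirrors the three stages of the proof of Theorem~\ref{thm:regrl}, with two genuinely new features specific to the Caputo setting: the solution and test spaces differ ($U=\Hd{\alpha/2}\II\ne V$), so coercivity is not available, and the target regularity $\alpha+\beta$ exceeds $\alpha/2$, which forces an iteration in the regularity step.

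For existence and uniqueness I would start from the fact, recorded in the Remark after Assumption~\ref{ass:bilin}, that for $q=0$ the form $A(\cdot,\cdot)$ already satisfies the inf--sup conditions on $U\times V$. Writing $a(u,v)=A(u,v)+(qu,v)$, the extra term defines a compact perturbation because $q\in L^\infty\II$ and the embedding $\Hd{\alpha/2}\II\hookrightarrow L^2\II$ is compact (Rellich). Hence the operator associated with $a(\cdot,\cdot)$ is a compact perturbation of an isomorphism, i.e.\ Fredholm of index zero, and the Fredholm alternative together with Assumption~\ref{ass:bilin}(a) and $(\mathrm{a}^\ast)$ (triviality of the two kernels) upgrades injectivity to bijectivity; this yields the inf--sup condition for $a(\cdot,\cdot)$ and a unique weak solution $u\in\Hd{\alpha/2}\II$ with $\|u\|_{\Hd{\alpha/2}\II}\le c\|f\|_{L^2\II}$.

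Next I would identify this $u$ with an explicit representation. Rewriting the weak equation as $A(u,v)=(f-qu,v)$ for all $v\in V$ exhibits $u$ as the $q=0$ Caputo solution for the datum $F:=f-qu\in L^2\II$; by the construction of Section~\ref{sec:prelim} applied with right-hand side $F$, the unique such solution is
\begin{equation*}
u=-g+g(1)\,x,\qquad g:={_0I_x^\alpha}F={_0I_x^\alpha}(f-qu),
\end{equation*}
and one checks $g(0)=0$, $u(1)=0$, so the homogeneous boundary conditions hold. This representation drives both the regularity estimate and the verification of the strong equation.

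The last stage is a bootstrap, and it is here that the real work lies. The engine is a multiplication estimate: splitting $q(x)u(x)-q(y)u(y)=q(x)\bigl(u(x)-u(y)\bigr)+u(y)\bigl(q(x)-q(y)\bigr)$ in the Gagliardo seminorm gives, for $0\le s\le\beta$,
\begin{equation*}
|qu|_{H^s\II}\le \|q\|_{L^\infty\II}\,|u|_{H^s\II}+\|u\|_{L^\infty\II}\,|q|_{H^s\II};
\end{equation*}
since $\alpha/2>1/2$ we have $u\in\Hd{\alpha/2}\II\hookrightarrow L^\infty\II$, and because $q\in H_0^\beta\II\cap L^\infty\II$ and both $q$ and $u$ vanish at the endpoints, the zero extension of $qu$ lands in $\Hd s\II$ whenever $u\in H^s\II$ with $s\le\beta$. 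Starting from $u\in\Hd{\alpha/2}\II$, this gives $F=f-qu\in\Hd{\min(\beta,\alpha/2)}\II$, hence $g\in\Hdi 0{\alpha+\min(\beta,\alpha/2)}$ by Theorem~\ref{thm:difint}(b) and thus $u\in H^{\alpha+\min(\beta,\alpha/2)}\II$ (the polynomial $g(1)x$ being smooth). If $\beta\le\alpha/2$ this is already $H^{\alpha+\beta}\II$; otherwise $u\in H^{3\alpha/2}\II$ and, since $3\alpha/2>3/2\ge\beta$, a second pass with $s=\beta$ yields $F\in\Hd\beta\II$, $g\in\Hdi 0{\alpha+\beta}$, and finally $u\in H^{\alpha+\beta}\II$ with $\|u\|_{H^{\alpha+\beta}\II}\le c\|f\|_{\Hd\beta\II}$ after tracking constants through the well-posedness bound. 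Finally, because $\alpha+\beta>3/2$ the Caputo and Riemann--Liouville derivatives coincide on $\Hdi 0{\alpha+\beta}$, so applying $\DDC0\alpha$ to $u=-g+g(1)x$ and using $\DDC0\alpha\,{_0I_x^\alpha}F=F$ together with $\DDC0\alpha x=0$ returns $-\DDC0\alpha u+qu=f$; hence $u$ solves \eqref{strongp}. I expect the fractional-Sobolev multiplication step to be the main obstacle: one must confirm that $qu$ lies in the zero-extension space $\Hd s\II$ rather than merely $H^s\II$, which hinges on the endpoint vanishing of $q$ and $u$ and on the embedding $\Hd{\alpha/2}\II\hookrightarrow L^\infty\II$, and one must check that the bootstrap terminates, which uses $\beta\le 1<3\alpha/2$.
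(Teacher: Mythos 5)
Your proposal is correct and follows essentially the same route as the paper's own (sketched) proof: inf--sup well-posedness via the results of the companion paper, the explicit representation $u=-{_0I_x^\alpha}(f-qu)+g(1)x$, and a bootstrap based on $\widetilde f=f-qu\in \Hd{\min(\beta,\alpha/2)}\II$. You merely fill in the details the paper compresses into ``a standard bootstrap argument'' (the product estimate for $qu$ and the two-pass termination), and these details check out.
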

\begin{proof}
Again we only sketch the proof. First, by Assumption \ref{ass:bilin}, the bilinear form $a(u,v)$ satisfies
the inf-sup condition \cite[Theorem 4.5]{JinLazarovPasciak:2013a}, and thus problem \eqref{eqn:bilinform}
has a unique weak solution $u\in \Hd{\alpha/2}\II$. To derive
the regularity, we first consider the case $q=0$. Then direct computation shows that the function
\begin{equation}\label{eqn:caprep}
   u=({_0I_x^\alpha}f)(1)x-{_0I_x^\alpha f}(x)
\end{equation}
satisfies the differential equation under the condition $f\in \Hd{\beta}\II$ with $\alpha+\beta>3/2$, and the
boundary condition, and thus it is the solution to \eqref{strongp}. The first term in the representation is smooth,
and the second term belongs to $H^{\alpha+\beta}\II$, and thus $u\in H^{\alpha+\beta}\II$. In the case of a general $q$,
we rewrite the equation as $-\DDR0\alpha = \widetilde{f}$, with $\widetilde{f}=f-qu\in \Hd{\min(\beta,\alpha/2)}\II$
under the given condition. This together with a standard bootstrap argument concludes the proof of the theorem.
\end{proof}

\begin{remark}
The representation \eqref{eqn:caprep}  of the strong solution in the Caputo case is valid only under
the condition $f\in \Hd\beta\II$ with $\alpha+\beta>3/2$. If it is not satisfied, there is no known
solution representation. Note also the drastic difference in the solution regularity for the
Riemann-Liouville and Caputo cases: for the former, the solution is generally at best in $H^{\alpha-1+
\beta}\II$, for any $\beta\in[0,1/2)$, irrespective of the smoothness of the source $f$, whereas for
the latter, it can be made arbitrarily smooth if the potential $q$ and the source $f$ are sufficiently smooth.
The condition $q\in H_0^\beta\II \cap L^\infty\II$ in Theorem \ref{thm:regcap} can be relaxed to $q\in H^\beta
\II\cap L^\infty\II$ if $\beta\neq1/2$.
\end{remark}

Finally we turn to the adjoint problem in the Caputo case: find $w\in V$ such that
\begin{equation*}
  a(v,w) = (v, f)\quad \mbox{ for all } v \in U,
\end{equation*}
for some $f\in L^2\II$. The strong form reads $-\DDR1\alpha w + qw = f$, with $w(1)=0$ and $(x^{1-\alpha},w)=0$.
By repeating the preceding arguments, we deduce that for the case $q=0$, the solution $w$ can be written into
\begin{equation*}
  w =  - {_xI_1^\alpha} f + \frac{(x,f)}{\Gamma(\alpha)}(1-x)^{\alpha-1}.
\end{equation*}
Therefore, we have the following regularity estimate.
\begin{theorem}\label{thm:regcapadj}
Let Assumption \ref{ass:bilin} hold and $q\in L^\infty\II$. Then for
 $f\in L^2\II$ the solution $w$ to \eqref{eqn:bilinadj} is in $H^{\alpha-1+\beta}\II$
for any $\beta\in[0,1/2)$, and satisfies
\begin{equation*}
  \|w\|_{H^{\al -1 +\beta}\II} \le c \|f\|_{L^2\II}.
\end{equation*}
\end{theorem}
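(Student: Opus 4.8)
The plan is to follow the three-step template already used for the Riemann--Liouville adjoint result, Theorem~\ref{thm:regadjrl}, since the representation of $w$ displayed just before the statement has exactly the same structure as there: a fractional-integral part plus a single singular boundary layer $(1-x)^{\alpha-1}$, only with a different coefficient.

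First I would settle existence and uniqueness. By Assumption~\ref{ass:bilin}, the bilinear form $a(\cdot,\cdot)$ satisfies the inf-sup condition on $U\times V$ exactly as in the proof of Theorem~\ref{thm:regcap}; since this condition is symmetric in its two arguments, it governs the adjoint equation \eqref{eqn:bilinadj} as well. Hence there is a unique $w\in V\subset\Hd{\alpha/2}\II$, with the a priori bound $\|w\|_{\Hd{\alpha/2}\II}\le c\|f\|_{L^2\II}$.

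Second, I would treat $q=0$ through the explicit formula. I would first confirm that the proposed $w$ lies in the test space $V$: the boundary value $w(1)=0$ holds automatically since $\alpha>1$ makes $(1-x)^{\alpha-1}$ vanish at $x=1$ and ${_xI_1^\alpha}f$ vanishes there too, while the constraint $(x^{1-\alpha},w)=0$ pins down the coefficient of the singular term, which one checks equals $(x,f)/\Gamma(\alpha)$. I would also verify $-\DDR1\alpha w=f$ by repeating the integration-by-parts and semigroup manipulations of \eqref{rlvar}--\eqref{adef} in their right-sided form. The regularity then splits cleanly: the right-sided analogue of Theorem~\ref{thm:difint}(b) with $s=0$ places ${_xI_1^\alpha}f$ in $\Hdi1\alpha\subset H^\alpha\II$ with norm bounded by $\|f\|_{L^2\II}$, whereas a direct computation shows $(1-x)^{\alpha-1}\in H^{\alpha-1+\beta}\II$ precisely for $\beta\in[0,1/2)$. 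Summing gives $w\in H^{\alpha-1+\beta}\II$ with the stated bound.

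Finally, I would remove the restriction $q=0$ by perturbation. Since the unique solution satisfies $w\in\Hd{\alpha/2}\II\subset L^2\II$ and $q\in L^\infty\II$, the product $qw$ lies in $L^2\II$, so $w$ solves the $q=0$ adjoint equation with data $\widetilde f:=f-qw\in L^2\II$. The $q=0$ estimate then yields $\|w\|_{H^{\alpha-1+\beta}\II}\le c\|\widetilde f\|_{L^2\II}\le c(\|f\|_{L^2\II}+\|q\|_{L^\infty\II}\|w\|_{L^2\II})\le c\|f\|_{L^2\II}$, the last inequality using the $\Hd{\alpha/2}\II$ stability from the first step. I expect the only genuine obstacle to be the sharp Sobolev regularity of the boundary layer: exactly as in Remark~\ref{rmk:regrl}, the term $(1-x)^{\alpha-1}$ is what caps the smoothness at $H^{\alpha-1+\beta}\II$ with $\beta<1/2$, and no amount of regularity of $f$ can improve it. No iterated bootstrap is needed because $\alpha-1+\beta<3/2$, so $\widetilde f$ never acquires more than $L^2\II$ smoothness.
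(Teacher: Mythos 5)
Your proposal is correct and follows essentially the same route as the paper: existence and uniqueness from the inf-sup condition supplied by Assumption \ref{ass:bilin}, the explicit representation $w=-{_xI_1^\alpha}f+\frac{(x,f)}{\Gamma(\alpha)}(1-x)^{\alpha-1}$ for $q=0$ (with the coefficient fixed by the constraint $(x^{1-\alpha},w)=0$, exactly as you compute), and the regularity read off from the two pieces, followed by absorbing $qw$ into the right-hand side for general $q$. The paper merely compresses all of this into ``by repeating the preceding arguments''; your write-up fills in the same steps in more detail.
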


\section{Finite Element Method}\label{sec:fem}

Now we turn to the finite element formulation of the eigenvalue problem \eqref{eigen-strong}, based
on variational formulations described in Section \ref{sec:prelim}, and finite-dimensional subspaces
$U_h\subset U$ and $V_h\subset V$. Then the details of its efficient implementation will be presented.
Finally, we shall apply the abstract convergence theory due to Buba\v{s}ka-Osborn
\cite{Babuska-Osborn} to derive preliminary error estimates for the approximate eigenvalues.
\subsection{Finite element spaces}
In the finite element method, we first divide the unit interval $D$ into  a (not
necessarily uniform) mesh, with the grid points $0=x_0<\ldots<x_{m+1}=1$. We
denote by $h_i=x_i-x_{i-1}$, $i=1,\ldots,m+1$, the local mesh sizes, and $h=
\max_i h_i$ the (global) mesh size. Then we define the finite dimensional space
$$
X_h=\{v: ~~v \in C(\overline{D}),~~\mbox{linear on} ~~[x_{i-1},x_i],~~ i=1, \dots, m+1\}.
$$
The nodal basis for $X_h$ will be the standard ``hat functions'', denoted by $\phi_j(x)$, $j=0, \dots,m+1$.
Then the solution finite element space $U_h$ is defined as
\begin{equation}\label{eqn:Uh}
U_h =\{ v \in X_h: ~~ v(0)=v(1)=0\}.
\end{equation}

The test space $V_h$ depends on the type of the fractional derivative. It can be taken to be
$V_h=U_h$ for the Riemann-Liouville derivative. For the Caputo derivative, we define
a finite element subspace $V_h\subset V$ as
\begin{equation}\label{eqn:Vh}
   V_h=    \{v\in X_h: v(1)=0, ~~~(x^{1-\alpha}, v)=0\}.
\end{equation}
To construct a basis for $V$, we consider the candidate set $\{\widetilde{\phi}_j\}_{j=0}^m$,
with $\widetilde{\phi}_j=\phi_j-\gamma_j(1-x)$, where the constants $\{\gamma_j\}$ are determined
to satisfy the integral constraint $(x^{1-\alpha}, \widetilde{\phi}_j)=0$, i.e.,
$\gamma_i= \int_0^1x^{1-\alpha}\phi_i(x)dx/ \int_0^1(1-x)x^{1-\alpha}dx.$
Since $\int_0^1(1-x)x^{1-\alpha}dx={\frac{1}{2-\alpha}-\frac{1}{3-\alpha}}$ and for $i=1,\ldots,m$,
\begin{equation*}
  \begin{aligned}
    \int_0^1x^{1-\alpha}\phi_i(x)dx
        & = \int_{x_{i-1}}^{x_i}x^{1-\alpha}\frac{x-x_{i-1}}{h_i}dx + \int_{x_i}^{x_{i+1}}x^{1-\alpha}\frac{x_{i+1}-x}{h_{i+1}}dx\\
        &=\left(\frac{1}{2-\alpha}-\frac{1}{3-\alpha}\right)\left(\frac{x_{i-1}^{3-\alpha}}{h_{i}}+\frac{x_{i+1}^{3-\alpha}}{h_{i+1}}
          -\frac{x_i^{3-\alpha}}{h_i}-\frac{x_i^{3-\alpha}}{h_{i+1}}\right),
  \end{aligned}
\end{equation*}
the coefficient $\gamma_i$ is given by
\begin{equation*}
\gamma_i = \frac{x_{i-1}^{3-\alpha}}{h_i}+\frac{x_{i+1}^{3-\alpha}}{h_{i+1}}
-\frac{x_i^{3-\alpha}}{h_i}-\frac{x_i^{3-\alpha}}{h_{i+1}}, \quad i=1,\ldots,m.
\end{equation*}
Similarly, the coefficient $\gamma_0$ is given by $\gamma_0=h_1^{2-\alpha}$.
In particular, on a uniform mesh, i.e., $h_i=h$ and $x_i=ih$, the expression for $\gamma_i$
simplifies to $\gamma_i = h^{2-\alpha}((i-1)^{3-\alpha}+(i+1)^{3-\alpha}-2i^{3-\alpha})$.
By definition, the sequence $\{\gamma_i\}$ is strictly positive.
Clearly the set $\{\widetilde{\phi}_j\}_{j=0}^m$ spans the subspace $V_h$, however, the functions
$\widetilde{\phi}_j, j=0, \dots, m$ are linearly dependent. To see this, we observe that by the identity
$\sum_{j=0}^m(1-x_j)\phi_j(x) = 1-x$, there holds $\sum_{j=0}^m(1-x_j)\gamma_j = 1$. Thus,
\begin{equation*}
   \sum_{j=0}^m (1-x_j)\widetilde{\phi}_j = \sum_{j=0}^m (1-x_j)\phi_j(x) - (1-x)\sum_{j=0}^m\gamma_j(1-x_j)=0,
\end{equation*}
i.e., $\widetilde{\phi}_0 = -\sum_{j=1}^m(1-x_j)\widetilde{\phi}_j(x)$. In our computation,
we use the basis set $\{\widetilde{\phi}_j\}_{j=1}^m$.

These finite element spaces $U_h$ and $V_h$ satisfy the following approximation
properties \cite[Lemma 5.1]{JinLazarovPasciak:2013a}.
\begin{lemma}\label{lem:fem}
Let the mesh ${\mathcal T}_h$ be quasi-uniform. If $u \in H^\gamma\II \cap \Hd {\al/2}\II$ with
$ \alpha/2 \le \gamma \le 2$, then
\begin{equation*}
\inf_{v \in U_h} \| u -v \|_{H^{\al/2}\II} \le c h^{\gamma -\alpha/2} \|u\|_{H^\gamma\II}.
\end{equation*}
Further, if $u \in H^\gamma\II\cap V$, then
\begin{equation*}
\inf_{v \in V_h} \|u -v\|_{H^{\alpha/2}\II} \le c h^{\gamma -{\al/2}} \|u\|_{H^\gamma\II}.
\end{equation*}
\end{lemma}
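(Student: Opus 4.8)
The plan is to reduce both bounds to a single boundary-condition-free approximation estimate for the nodal (Lagrange) interpolant $I_h\colon C(\overline D)\to X_h$, and to impose the constraints defining $U_h$ and $V_h$ only at the very end. Because $\gamma\ge\alpha/2>1/2$, every admissible $u$ is continuous, so $I_h u$ is well defined. The core claim I would establish is that, on a quasi-uniform mesh,
\[
\|u-I_h u\|_{H^{\alpha/2}\II}\le c\,h^{\gamma-\alpha/2}\|u\|_{H^\gamma\II}\qquad\text{for all }u\in H^\gamma\II,\ \tfrac12<\alpha/2\le\gamma\le2 ,
\]
with no boundary conditions required; the two assertions of the lemma then follow by bounding the infimum by $\|u-I_h u\|_{H^{\alpha/2}\II}$, respectively by the error of a corrected interpolant.

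To prove the core claim I would argue by two successive interpolations. At the top of the smoothness range, the classical one-dimensional estimates $\|u-I_h u\|_{L^2\II}\le c h^2\|u\|_{H^2\II}$ and $\|u-I_h u\|_{H^1\II}\le c h\|u\|_{H^2\II}$, combined through the real-interpolation identity $[L^2\II,H^1\II]_{\alpha/2}=H^{\alpha/2}\II$ (valid since $\alpha/2\in(1/2,1)$), give $\|u-I_h u\|_{H^{\alpha/2}\II}\le c h^{2-\alpha/2}\|u\|_{H^2\II}$. At the bottom of the range I would use the $H^{\alpha/2}\II$-stability of $I_h$ on quasi-uniform meshes, which yields $\|u-I_h u\|_{H^{\alpha/2}\II}\le c\|u\|_{H^{\alpha/2}\II}$. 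Interpolating these two operator bounds in the source index via $[H^{\alpha/2}\II,H^2\II]_\theta=H^\gamma\II$ with $\gamma=\alpha/2+\theta(2-\alpha/2)$ produces the core estimate for the whole range $\gamma\in[\alpha/2,2]$. Since $u\in\Hd{\alpha/2}\II$ forces $u(0)=u(1)=0$, we have $I_h u\in U_h$, and the first assertion is immediate.

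For the second assertion the difficulty is that $I_h u$ need not satisfy the integral constraint in $V_h$, so I would correct it by
\[
\Pi_h u:=I_h u-c_h(1-x),\qquad c_h:=\frac{(x^{1-\alpha},\,I_h u)}{(x^{1-\alpha},\,1-x)} .
\]
The denominator equals $\tfrac1{2-\alpha}-\tfrac1{3-\alpha}>0$ for $\alpha\in(1,2)$, so $c_h$ is well defined; moreover $\Pi_h u\in X_h$, $\Pi_h u(1)=u(1)=0$, and $(x^{1-\alpha},\Pi_h u)=0$, whence $\Pi_h u\in V_h$. Using $(x^{1-\alpha},u)=0$ for $u\in V$ gives $c_h(x^{1-\alpha},1-x)=(x^{1-\alpha},I_h u-u)$; because $H^{\alpha/2}\II\hookrightarrow C(\overline D)$ and $x^{1-\alpha}\in L^1\II$ (as $1-\alpha>-1$), the functional $w\mapsto(x^{1-\alpha},w)$ is bounded on $H^{\alpha/2}\II$, so $|c_h|\le c\|u-I_h u\|_{H^{\alpha/2}\II}$. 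Combining with the triangle inequality $\|u-\Pi_h u\|_{H^{\alpha/2}\II}\le\|u-I_h u\|_{H^{\alpha/2}\II}+|c_h|\,\|1-x\|_{H^{\alpha/2}\II}$ and the core estimate yields the second bound.

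The step I expect to be the main obstacle is the pair of interpolation-space identifications together with the fractional stability $I_h\colon H^{\alpha/2}\II\to H^{\alpha/2}\II$. The delicacy is that $\alpha/2$ sits just above the critical index $1/2$, precisely where nodal point values become borderline admissible and where the zero-extension spaces $\Hd s\II$ start to differ from $H^s\II$; confirming stability on quasi-uniform meshes and the reiteration $[H^{\alpha/2}\II,H^2\II]_\theta=H^\gamma\II$ at this index is the technical heart, after which the remaining estimates are routine.
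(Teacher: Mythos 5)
The paper itself contains no proof of this lemma: it is quoted, with its label and citation, directly from \cite{JinLazarovPasciak:2013a} (Lemma 5.1 there), and the argument given in that reference is essentially the one you propose --- nodal interpolation, endpoint estimates combined by operator interpolation in the source index, and, for $V_h$, a rank-one correction by a multiple of $1-x$ chosen so that the constraint $(x^{1-\alpha},v)=0$ is restored, with the correction coefficient controlled via $(x^{1-\alpha},u)=0$ and the embedding $H^{\alpha/2}\II\hookrightarrow C(\overline D)$. Your write-up is correct, and the one ingredient you leave as a black box (the $H^{\alpha/2}\II$-stability of the nodal interpolant on quasi-uniform meshes for $\alpha/2>1/2$, together with the reiteration identities) is the same standard fact the cited proof relies on.
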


\subsection{Implementation details}
With the finite dimensional subspaces $U_h \subset U$ and $V_h \subset V$ at hand, we
can now develop the finite element formulation of the eigenvalue problem \eqref{eigen-strong}:
find $u_h \in U_h$ and $\lambda_h \in \CC$ such that
\begin{equation*}
   a(u_h,v_h)= \lambda_h (u_h,v_h) \quad \forall v_h \in V_h.
\end{equation*}
Upon expanding the approximate eigenfunction $u_h$ into the canonical basis $\phi_j$ (with
$\mathbf{u}$ being the expansion coefficient vector) and choosing $v_h=\phi_i$ for the
Riemann-Liouville derivative (respectively, $v_h=\widetilde{\phi}_i$ for the Caputo derivative),
we arrive at the the following finite dimensional generalized eigenvalue problem
\begin{equation}\label{eqn:eigfem}
   \mathbf{Au} = \lambda_h\mathbf{Mu},
\end{equation}
where the stiffness matrix $\mathbf{A}=[a_{ij}]\in\mathbb{R}^{m\times m}$ and the mass matrix
$\mathbf{M}=[b_{ij}]\in\mathbb{R}^{m\times m}$ are respectively defined by
\begin{equation*}
  a_{ij}= \left\{\begin{aligned}
      (\DDR0{\alpha/2} \phi_j,\,\DDR1{\alpha/2}\phi_i)+(q\phi_j,\phi_i), &\quad \text{Riemann-Liouville case},\\
      (\DDR0{\alpha/2} \phi_j,\,\DDR1{\alpha/2}\widetilde{\phi}_i)+(q\phi_j,\widetilde{\phi}_i), & \quad \text{Caputo case},
    \end{aligned}\right.
\end{equation*}
and
\begin{equation*}
b_{ij}= \left\{\begin{aligned}
      (\phi_j,\,\phi_i),&\quad \text{Riemann-Liouville case},\\
      (\phi_j,\,\widetilde{\phi}_i), & \quad \text{Caputo case}.
    \end{aligned}\right.
\end{equation*}

Next we give explicit formulas for computing the stiffness matrix $\mathbf{A}$ and mass matrix $\mathbf{M}$.
We first consider the Riemann-Liouville case. We note that here the computation of the integrals involving
the potential term and the mass matrix is rather straightforward. Hence we shall focus our derivation on the
leading term, and for any order $\gamma=\alpha/2\in(1/2,1)$, we evaluate
\begin{equation}\label{eqn:intriem}
  -(\DDR0 \gamma \phi_j,\, \DDR 1 \gamma\phi_i),
\end{equation}
where $\phi_i$ and $\phi_j$ are the hat basis functions associated to the $i$th and
$j$th interior grid points, respectively. We note that $\phi_i'$ and $\phi_j'$ are
both piecewise constant, each with a support on two neighboring elements. Without loss of generality,
we can consider an interval $[x_{k-1}, x_k]$. We introduce the following two functions:
\begin{equation*}
  I_k(x)= 
    {_0I_x^{1-\gamma}}\chi_{[x_{k-1},x_k]}
\quad\mbox{  and  }\quad
\widetilde{I}_k(x)={_xI_1^{1-\gamma}}\chi_{[x_{k-1},x_k]}, 
\end{equation*}
where $\chi_S(x)$ refers to the characteristic function of the set $S$.
 Simple calculations yield (with $c_\gamma=1/\Gamma(2-\gamma)$)
\begin{equation*}
I_k(x)=\left\{\begin{array}{ll}
    0, &\quad x\leq x_{x_{k-1}},\\
    c_\gamma(x-x_{k-1})^{1-\gamma}, &\quad x_{k-1} \leq x\leq x_k,\\
    c_\gamma\left((x-x_{k-1})^{1-\gamma}-(x-x_k)^{1-\gamma}\right), & \quad x>x_k,
  \end{array}\right.
\end{equation*}
and
\begin{equation*}
  \widetilde{I}_k(x)= \left\{\begin{array}{ll}
    0, &\quad x\geq x_{x_k},\\
    c_\gamma(x_k-x)^{1-\gamma}, &\quad x_{k-1} \leq x\leq x_k,\\
    c_\gamma\left((x_k-x)^{1-\gamma}-(x_{k-1}-x)^{1-\gamma}\right), & \quad x<x_{k-1}.
  \end{array}\right.
\end{equation*}
To further simplify the notation, we introduce the functions $f_k(x)$ and $g_k(x)$ by
\begin{equation*}
   f_k(x) = c_\gamma(x-x_k)^{1-\gamma}\chi_{[x_k,1]} \quad \text{and}\quad
   g_k(x)  = c_\gamma(x_k-x)^{1-\gamma}\chi_{[0,x_k]}.
\end{equation*}
Consequently, we can succinctly express $I_k$ and $\tilde{I}_k$ as
\begin{equation*}
      I_k(x)=f_{k-1}(x)-f_k(x)\quad\mbox{and} \quad
      \widetilde{I}_k(x)=g_{k}(x)-g_{k-1}(x).
\end{equation*}

Now we can evaluate the integral \eqref{eqn:intriem}. For each interior node $x_i$,
$\phi_i'=\frac{1}{h_i}\chi_{[x_{i-1},x_i]}-\frac{1}{h_{i+1}}\chi_{[x_i,x_{i+1}]}$,
($h_i=x_i-x_{i-1}$ is the local mesh size). Hence, for any two interior nodes
$x_i$ and $x_j$, there holds
\begin{equation}\label{eqn:riementry}
   -(\DDR0\gamma \phi_j,\,\DDR1\gamma\phi_i)   = \int_0^1\left(\tfrac{1}{h_j}I_j-\tfrac{1}{h_{j+1}}I_{j+1}\right) \left(\tfrac{1}{h_i}\widetilde{I}_i-\tfrac{1}{h_{i+1}}\widetilde{I}_{i+1}\right)dx.
\end{equation}
Therefore, it suffices to evaluate
\begin{equation*}
\begin{split}
    \int_0^1I_j\widetilde{I}_idx & =\int_0^1 (f_{j-1}-f_j)(g_i-g_{i-1})dx \\
        & =\int_0^1 (f_{j-1}g_i -f_{j-1}g_{i-1} -f_jg_i+f_jg_{i-1}) dx.
\end{split}
\end{equation*}
Each of these four terms can be expressed in closed form using the Gamma function $\Gamma(\cdot)$
as follows: if $x_i\leq x_j$, $\int_0^1f_j(x)g_i(x) dx=0$, and if $x_i> x_j$
\begin{equation}\label{eqn:fg}
  \begin{aligned}
    \int_0^1f_j(x)g_i(x) dx&= c_\gamma^2\int_{x_j}^{x_i}(x-x_i)^{1-\gamma}(x_i-x)^{1-\gamma}dx\\
      &=c_\gamma^2(x_i-x_j)^{3-2\gamma}\int_0^1s^{1-\gamma}(1-s)^{1-\gamma}ds
      =\frac{(x_i-x_j)^{3-\alpha}}{\Gamma(4-\alpha)}.
  \end{aligned}
\end{equation}
Here the last line follows from the definition of Beta function and its relation to Gamma function, i.e.,
$$\int_0^1s^{1-\gamma}(1-s)^{1-\gamma}ds=B(2-\gamma,2-\gamma)=\frac{\Gamma(2-\gamma)\Gamma(2-\gamma)}{\Gamma(4-2\gamma)}.$$

The matrix $\mathbf{A}_0=\left[-(\DDR0{\alpha/2}\phi_j,\ \DDR1{\alpha/2}\phi_i)\right]\in\mathbb{R}^{m\times m}$ has the
following structure.
\begin{proposition}
The matrix $\mathbf{A}_0$ is of lower Hessenberg form, and on a uniform mesh, it is Toeplitz.
\end{proposition}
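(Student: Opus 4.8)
The plan is to read off both structural claims directly from the support properties and the closed-form expression for the building-block integrals derived above. By \eqref{eqn:riementry} the entry $(\mathbf{A}_0)_{ij}=-(\DDR0\gamma\phi_j,\DDR1\gamma\phi_i)$ is a fixed linear combination of the four integrals $\int_0^1 I_p\widetilde{I}_q\,dx$ with $p\in\{j,j+1\}$ and $q\in\{i,i+1\}$, and each of these is in turn, via $I_p=f_{p-1}-f_p$ and $\widetilde{I}_q=g_q-g_{q-1}$, a combination of the products $\int_0^1 f_a g_b\,dx$. So everything reduces to understanding these elementary products.

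For the lower Hessenberg form I would argue purely by supports. Since $f_a$ is supported on $[x_a,1]$ and $g_b$ on $[0,x_b]$, the product $f_a g_b$ is supported on $[x_a,x_b]$, which has positive measure exactly when $x_a<x_b$, i.e. $a<b$; this is the dichotomy already recorded in \eqref{eqn:fg}. Consequently $\int_0^1 I_p\widetilde{I}_q\,dx$ can be nonzero only when $q\ge p$. Applying this criterion to the four index pairs $(p,q)$ with $p\in\{j,j+1\}$, $q\in\{i,i+1\}$, one sees that as soon as $j\ge i+2$ every one of them vanishes, so $(\mathbf{A}_0)_{ij}=0$ for $j>i+1$, which is precisely the lower Hessenberg property. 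The same bookkeeping shows the superdiagonal entry $j=i+1$ is not forced to vanish, so the matrix is genuinely Hessenberg and not merely lower triangular.

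For the Toeplitz structure on a uniform mesh I would use the explicit value in \eqref{eqn:fg}. With $x_i=ih$ and $h_i\equiv h$ it gives $\int_0^1 f_a g_b\,dx=h^{3-\alpha}(b-a)^{3-\alpha}/\Gamma(4-\alpha)$ for $b>a$ and $0$ otherwise; in particular this depends on $a$ and $b$ only through the difference $b-a$. Writing $\psi(k)$ for this common value, the four-term expansion of $\int_0^1 I_j\widetilde{I}_i\,dx$ collapses to $\psi(i-j+1)-2\psi(i-j)+\psi(i-j-1)$, a function of $i-j$ alone. Since on a uniform mesh the prefactors $1/h_i$ in \eqref{eqn:riementry} are all equal to $1/h$, the entry $(\mathbf{A}_0)_{ij}$ is a fixed second-difference combination of such quantities evaluated at $i-j$ and $i-j\pm1$, hence again a function of $i-j$ only. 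This is the Toeplitz property.

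The argument is essentially bookkeeping, and I do not expect any genuine analytic obstacle. The only point demanding care is the accurate tracking of the four index shifts $p\in\{j,j+1\}$, $q\in\{i,i+1\}$ through the support criterion $a<b$ and through the difference $b-a$; getting these shifts right is what pins down both the exact Hessenberg bandwidth and the precise second-difference form of the Toeplitz symbol.
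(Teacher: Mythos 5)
Your proof is correct and follows essentially the same route as the paper: the lower Hessenberg property is read off from supports (the paper argues directly with the supports of $I_k$ and $\widetilde{I}_k$, while you descend one level further to the products $f_a g_b$, which is equivalent), and the Toeplitz property comes from the closed form \eqref{eqn:fg} depending only on $x_i-x_j=(i-j)h$. Your explicit second-difference formula $\psi(i-j+1)-2\psi(i-j)+\psi(i-j-1)$ is a harmless (and correct) refinement of the paper's one-line observation.
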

\begin{proof}
First we show that $\tilde{a}_{i,j}=-(\DDR0{\alpha/2}\phi_j,\
\DDR1{\alpha/2}\phi_i)$ vanishes if $j>i+1$. To this end, we note that the functions $I_k(x)$
and $\widetilde{I}_k(x)$ vanish on the interval $[0, x_{k-1}]$ and $[x_k,1]$, respectively.
Hence, for index $j>i+1$, in the integral \eqref{eqn:riementry}, the first term of the
integrand has a support $\text{supp}(\frac{1}{h_j}I_j-\frac{1}{h_{j+1}}I_{j+1})\subset [x_j,1]$,
and the second term has a support $\text{supp}(\frac{1}{h_i}\widetilde{I}_i-\frac{1}{h_{i+1}}
\widetilde{I}_{i+1})\subset [0,x_{i+1}]$, i.e., the integrand in \eqref{eqn:riementry} vanishes
identically for $j>i+1$, which shows the first assertion. According to \eqref{eqn:fg},
on a uniform mesh, the integral $\int_0^1f_jg_idx$ depends only on the difference of the indices,
i.e., $x_i-x_j=(i-j)h$, and so is the entry $\tilde{a}_{i,j}$. Hence the matrix $\mathbf{A}_0$ is Toeplitz.
\end{proof}

\begin{remark}
By the Toeplitz structure, on a uniform mesh, the matrix $\mathbf{A}_0$ can be constructed by computing
only the first row and first column and requires only two vectors (actually only $m+1$ nonzeros). Further,
solving linear systems with Toeplitz matrices can be carried out efficiently via fast Fourier transform.
\end{remark}

Now we turn to the Caputo derivative $\DDC 0 \alpha$. By the construction of the basis functions
$\{\widetilde{\phi}_j\}$, the leading term
\begin{equation*}
     - (\DDR0\gamma \phi_j,\, \DDR1 \gamma \widetilde{\phi}_i) = -(\DDR0\gamma \phi_j,\, \DDR1
     \gamma \phi_i) + \gamma_i(\DDR0\gamma\phi_j,\,\DDR1\gamma(1-x))
\end{equation*}
It follows directly from this identity that the stiffness matrix $\mathbf{A}_0^c\in\mathbb{R}^{m\times m}$ for
the Caputo case (with $q=0$) is a rank-one perturbation of the Riemann-Liouville stiffness matrix $\mathbf{A}_0$, i.e.,
\begin{equation*}
  \mathbf{A}_0^{c}=\mathbf{A}_0+\boldsymbol{\gamma}\mathbf{b}^\mathrm{t},
\end{equation*}
where the column vectors $\boldsymbol{\gamma}\in\mathbb{R}^m$ and $\mathbf{b}\in\mathbb{R}^m$ are given by
\begin{equation*}
    \boldsymbol{\gamma}  = (\gamma_1,\gamma_2,\ldots,\gamma_m)^\mathrm{t}\quad \mbox{and}\quad
    \mathbf{b} = (b_1,b_2,\ldots,b_m)^\mathrm{t}.
\end{equation*}
Further, the $j$th entry $b_j:=(\DDR0\gamma\phi_j,\,\DDR1\gamma(1-x))$ of the vector $\mathbf{b}$ can be explicitly evaluated as
\begin{equation*}
  \begin{aligned}
     b_j&=-({_0I_x^{1-\gamma}}(\tfrac{1}{h_j}\chi_{[x_{j-1},x_j]}-\tfrac{1}{h_{j+1}}\chi_{[x_j,x_{j+1}]}),\, {_xI_1^{1-\gamma}} 1)\\
        &= -(\tfrac{1}{h_j}\chi_{[x_{j-1},x_j]}-\tfrac{1}{h_{j+1}}\chi_{[x_j,x_{j+1}]},\, {_xI_1^{2-\alpha}}1)\\
        &= -\tfrac{1}{\Gamma(4-\alpha)}\left[\tfrac{(1-x_j)^{3-\alpha}}{h_j}+\tfrac{(1-x_j)^{3-\alpha}}{h_{j+1}}-
         \tfrac{(1-x_{j-1})^{3-\alpha}}{h_j}-\tfrac{(1-x_{j+1})^{3-\alpha}}{h_{j+1}}\right],
  \end{aligned}
\end{equation*}
where the second line follows from \eqref{eqn:intpart} and the semigroup property  \eqref{semig}.
Similarly, the matrix involving the potential term and the mass matrix in the Caputo case can be
constructed via a simple rank-one perturbation from the Riemann-Liouville counterparts.
\subsection{Error analysis for the eigenvalue problem}\label{ssec:FEM-eigen}
In this part, we provide preliminary error analysis of the finite element approximations.
We shall follow the notation and use some fundamental results from
\cite{Babuska-Osborn}. To this end, we introduce the operator $T: L^2\II\to H_0^{\alpha/2}\II$ defined by
\begin{equation}\label{T-operator}
T f \in H_0^{\alpha/2}\II, \quad a(Tf,v) = (f,v) \quad \forall v \in V.
\end{equation}
Obviously, $T$ is the solution operator of the source problem \eqref{strongp}. According to Theorems
\ref{thm:regrl} and \ref{thm:regcap}, the solution operator $T$ satisfies the following smoothing property:
$$
\| Tf \|_{H^{\alpha/2}\II} \le c\|f\|_{L^2\II}.
$$
Since the space $ H_0^{\alpha/2}\II$ is compactly embedded into $ L^2\II$ \cite{AdamsFournier:2003}, we deduce that
$T:~ L^2\II\to L^2\II$ is a compact operator. Meanwhile, by viewing $T$ as an operator on the space
$ H^{\alpha/2}\II$ and using the regularity pickup established in Theorems \ref{thm:regrl} and \ref{thm:regcap}
we can show that $T: ~~H^{\alpha/2}\II \to H^{\alpha/2}\II$ is compact. Then it follows immediately
from \eqref{T-operator} that $(\lambda, u)$ is an eigenpair
of \eqref{eigen-weak} if and only if
\begin{equation*}
 Tu= \lambda^{-1} u, \quad u \not = 0,
\end{equation*}
i.e., if and only if $ (\mu=1/\lambda, u)$ is an eigenpair of $T$. With the help of this correspondence,
the properties of the eigenvalue problem \eqref{eigen-weak} can be derived from the spectral theory for
compact operators \cite{Dunford-Schwartz}. Let $\sigma(T) \subset \CC$ be the set of all eigenvalues of
$T$ (or its spectrum), which is known to be a countable set with no nonzero limit points. Due to
Assumption \ref{ass:bilin} on the bilinear form $a(u,v)$, zero is not an eigenvalue of $T$. Furthermore,
for any $\mu \in \sigma(T)$, the space $N(\mu I -T)$, where $N$ denotes the null space, of eigenvectors
corresponding to $\mu$ is finite dimensional.

Now let $T_h: ~U_h \to U_h$ be a family of operators for $0<h<1$ defined by
\begin{equation}\label{Th-operator}
T_h f \in U_h, \quad a(T_hf,v) = (f,v)\quad \forall v \in V_h. 
\end{equation}

Next we apply the abstract approximation theory for the spectrum of variationally formulated eigenvalue problems
\cite[Section 8]{Babuska-Osborn}. To this end, we need to establish the approximation properties of the finite element
method \eqref{eigen-weak-h}. Let $\lambda^{-1} \in \sigma(T) $ be an eigenvalue of $T$ with algebraic
multiplicity $m$. Since $T_h \to T$ in norm, $m$ eigenvalues $\lambda^1_h, \dots, \lambda^m_h$ of $T_h$ will
converge to $\lambda$. The eigenvalues $ \lambda^j_h$ are counted according to the algebraic multiplicity of
$\mu_h^j=1/\lambda^j_h$ as eigenvalues of $T_h$. Further, we define the following finite dimensional spaces
\begin{equation*}
  \begin{aligned}
     M(\lambda)&=\left\{u:\ \|u\|_{H^{\alpha/2}\II}=1
       ~~\mbox{a generalized eigenvector of \eqref{eigen-weak} corresponding to} ~~\lambda\right\},\\
     M^*(\lambda)& =\left \{u:\ \|u\|_{H^{\alpha/2}\II}=1
       ~~\mbox{a generalized adjoint eigenvector of \eqref{eigen-weak} corresponding to} ~~\lambda\right\},
  \end{aligned}
\end{equation*}
and the following quantities:
\begin{equation*}
  \begin{aligned}
    \epsilon_h&=\epsilon_h(\lambda) = \sup_{u \in M(\lambda)} \inf_{v \in U_h} \|u-v\|_{H^{\alpha/2}\II},\\
    \epsilon^*_h&=\epsilon^*_h(\lambda) = \sup_{u \in M^*(\lambda)} \inf_{v \in V_h} \|u-v\|_{H^{\alpha/2}\II}.
  \end{aligned}
\end{equation*}

Then we have the following estimates on $\epsilon_h$ and $\epsilon_h^\ast$.
\begin{lemma}\label{lem:femeps}
Let Assumption \ref{ass:bilin} hold, $r\in (0,{\alpha/2}-1/2)$, and the finite element spaces $U_h$
and $V_h$ be defined by \eqref{eqn:Uh} and \eqref{eqn:Vh}, respectively. Then the following error
bounds for $\epsilon_h$ and $\epsilon_h^\ast$ are valid.
\begin{itemize}
  \item[(a)] For Caputo derivative: if  $q\in H_0^s\II\cap L^\infty\II$, $0\le s\le1$, such that
   $\alpha+s>3/2$, then $\epsilon_h \le c h^{\min(\alpha+s,2)-\alpha/2}$ and $\epsilon^*_h \le C h^r$.
  \item[(b)] For Riemann-Liouville derivative: $\epsilon_h \le c h^r$ and $\epsilon^*_h \le C h^r$.
\end{itemize}
\end{lemma}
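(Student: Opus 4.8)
The plan is to reduce both bounds to the finite element approximation estimate of Lemma~\ref{lem:fem}, after first establishing the Sobolev regularity of the (generalized) eigenvectors. Recall from the correspondence $Tu=\mu u$ with $\mu=1/\lambda$ that a generalized eigenvector of \eqref{eigen-weak} is a generalized eigenvector of the compact solution operator $T$; since the associated spaces are finite dimensional, the suprema defining $\epsilon_h$ and $\epsilon_h^\ast$ are taken over finite-dimensional sets, so it suffices to bound $\inf_{v\in U_h}\|u-v\|_{\Hd{\alpha/2}\II}$ (resp. over $V_h$) for a single normalized (generalized) eigenvector $u$, with a constant depending only on $\lambda$. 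I would treat genuine eigenvectors first and then propagate the regularity along a Jordan chain.

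For the Riemann--Liouville case (part (b)), a genuine eigenvector satisfies $u=T(\lambda u)$ with $u\in\Hd{\alpha/2}\II\subset L^2\II$, so Theorem~\ref{thm:regrl} gives $u\in H^{\alpha-1+\beta}\II$ for every $\beta\in[0,1/2)$. For a generalized eigenvector I would use $(T-\mu I)u=u_{\mathrm{prev}}$ for a lower-rank chain element $u_{\mathrm{prev}}$, i.e. $u=\mu^{-1}(Tu-u_{\mathrm{prev}})$; here $Tu\in H^{\alpha-1+\beta}\II$ by Theorem~\ref{thm:regrl} (as $u\in L^2\II$) and $u_{\mathrm{prev}}\in H^{\alpha-1+\beta}\II$ by the induction hypothesis, whence $u\in H^{\alpha-1+\beta}\II$. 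The adjoint (generalized) eigenvectors are handled identically, replacing Theorem~\ref{thm:regrl} by Theorem~\ref{thm:regadjrl}. Applying Lemma~\ref{lem:fem} with $\gamma=\alpha-1+\beta$ then yields
\[
\inf_{v\in U_h}\|u-v\|_{\Hd{\alpha/2}\II}\le c\,h^{\alpha/2-1+\beta},
\]
and, since $\beta<1/2$ is arbitrary, for any $r<\alpha/2-1/2$ one may choose $\beta$ with $\alpha/2-1+\beta>r$, giving $\epsilon_h\le ch^r$ and likewise $\epsilon_h^\ast\le Ch^r$.

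For the Caputo case (part (a)), the adjoint problem has the right-sided Riemann--Liouville structure $-\DDR1\alpha w+qw=f$, so Theorem~\ref{thm:regcapadj} again places the adjoint (generalized) eigenvectors in $H^{\alpha-1+\beta}\II$, $\beta<1/2$, and the preceding argument gives $\epsilon_h^\ast\le Ch^r$. For the primal eigenvectors I would rewrite the eigenvalue equation $-\DDC0\alpha u+qu=\lambda u$ as $-\DDR0\alpha u=(\lambda-q)u$ and bootstrap: starting from $u\in\Hd{\alpha/2}\II$, a fractional Sobolev multiplication estimate (using $q\in H_0^s\II\cap L^\infty\II$) gives $(\lambda-q)u\in\Hd{\min(s,\alpha/2)}\II$; since $\alpha+\min(s,\alpha/2)>3/2$, Theorem~\ref{thm:regcap} raises the regularity of $u$ to $H^{\alpha+\min(s,\alpha/2)}\II$, and a second pass (where the product is now limited only by $s$) yields $u\in H^{\alpha+s}\II$. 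Because $U_h$ consists of piecewise-linear functions, Lemma~\ref{lem:fem} is applicable only up to $\gamma=2$, so taking $\gamma=\min(\alpha+s,2)$ gives $\epsilon_h\le ch^{\min(\alpha+s,2)-\alpha/2}$; the same Jordan-chain induction extends this to generalized eigenvectors.

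The routine part is the final insertion into Lemma~\ref{lem:fem}. The delicate points, where I expect the main work, are twofold: first, propagating regularity along Jordan chains for genuine \emph{generalized} eigenvectors, which the inductive identity $u=\mu^{-1}(Tu-u_{\mathrm{prev}})$ resolves but must be arranged so that each $Tu$ is controlled by the regularity theorems; and second, the Caputo bootstrap, where one must justify the multiplication estimate for $(\lambda-q)u$, verify that the product remains in the \emph{tilde} space $\Hd{\min(s,\alpha/2)}\II$ (so that Theorem~\ref{thm:regcap} genuinely applies, using the vanishing boundary behaviour of $u$ and $q$), and check that $\alpha+\min(s,\alpha/2)>3/2$ holds at each pass so that the regularity pickup is legitimate.
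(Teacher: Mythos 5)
Your proposal is correct and follows essentially the same route as the paper: derive Sobolev regularity of the (adjoint) eigenvectors from the source-problem regularity results (Theorems \ref{thm:regrl}, \ref{thm:regadjrl}, \ref{thm:regcap}, \ref{thm:regcapadj}) with right-hand side $\lambda u$, then insert into the approximation bounds of Lemma \ref{lem:fem}. You are in fact somewhat more careful than the paper, which silently asserts $u\in H^{\alpha+s}\II$ in the Caputo case without the bootstrap and does not explicitly treat generalized eigenvectors; your Jordan-chain induction and two-pass bootstrap fill exactly those gaps.
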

\begin{proof}
The needed regularity for the Caputo case is a simple consequence of
Theorems \ref{thm:regcap}. We consider the only the case $q\in H_0^s\II\cap L^\infty\II$.
Since the right hand side of the problem \eqref{eigen-strong} is $\lambda u \in\Hd{\alpha/2}\II$, we have
$ u \in \Hd {\alpha/2}\II\cap H^{\alpha+s}\II$ in view of Theorem \ref{thm:regcap}. Next
let $\Pi_h u \in U_h$ be the finite element interpolant of $u(x)$. Then by Lemma \ref{lem:fem}
we deduce that
\begin{equation*}
\| u - \Pi_h u \|_{\Hd \gamma\II} \le h^{\min(\alpha+s,2)-\gamma} \|u\|_{H^{\min(\alpha+s,2)}\II}
\end{equation*}
for any $0 \le \gamma \le 1$. Therefore, by $\|u\|_{H^{\alpha+s}\II} \le c$, we get
\begin{equation*}
\epsilon_h = \sup_{u \in M(\lambda)} \inf_{v \in U_h} \|u-v\|_{H^{\alpha/2}\II} \le
\sup_{u \in M(\lambda)}\|u- \Pi_h u\|_{H^{\alpha/2}\II} \le c h^{\min(\alpha+s,2)-{\alpha/2}}.
\end{equation*}
The estimate $\epsilon_h^\ast$ follows from Theorem \ref{thm:regcapadj} and Lemma \ref{lem:fem}.
The Riemann-Liouville case follows analogously from regularity estimates for the source
problem and adjoint source problem in Theorems \ref{thm:regrl} and \ref{thm:regadjrl}, respectively.
\end{proof}

Now we state the main result for the approximation error of the eigenvalues of problem
\eqref{eigen-strong}. It follows immediately from \cite[Theorem 8.3 and pp. 683--714]{Babuska-Osborn}
and Lemma \ref{lem:femeps}.
\begin{theorem}\label{eigencval-error}
Let Assumption \ref{ass:bilin} hold. For $\mu \in \sigma(T)$, let $\delta$ be the smallest integer $k$ such that $N((\mu-T)^k) =
N((\mu -T)^{k+1})$. Suppose that for each $h$ there is unit vector $w_h^j$ satisfying
$((\lambda_h^j)^{-1} - T_h)^k w_h^j=0$, $j=1,\dots, m$ for some integer $k \le \delta$.
\begin{itemize}
  \item[(a)] For Caputo derivative, if $q\in H_0^s\II\cap
   L^\infty\II$ for some $0\le s\le1$ and $\alpha+s>3/2$, then for any
   $\gamma<\min(\alpha+s,2)-1/2$, there holds $|\lambda - \lambda_h^j| \le c h^{\gamma/\delta}$.
  \item[(b)] For Riemann-Liouville derivative, if $q\in L^\infty\II$, then for any $\gamma<\alpha-1$, there holds $|\lambda - \lambda_h^j| \le ch^{\gamma/\delta}$.
\end{itemize}
\end{theorem}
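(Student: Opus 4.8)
The plan is to reduce the eigenvalue error to the abstract spectral approximation theory of Babu\v{s}ka--Osborn \cite{Babuska-Osborn}, whose two essential ingredients---norm convergence $T_h\to T$ and the best-approximation quantities $\epsilon_h,\epsilon_h^\ast$---are already available from the preceding development. First I would record the reciprocal correspondence $\mu=1/\lambda\in\sigma(T)$ and $\mu_h^j=1/\lambda_h^j\in\sigma(T_h)$ between the eigenpairs of \eqref{eigen-weak}--\eqref{eigen-weak-h} and those of the solution operators $T$ and $T_h$ defined in \eqref{T-operator}--\eqref{Th-operator}. Because $\mu_h^j\to\mu\neq0$, the elementary identity $|\lambda-\lambda_h^j|=|\mu-\mu_h^j|/(|\mu|\,|\mu_h^j|)$ shows that $|\lambda-\lambda_h^j|\le c\,|\mu-\mu_h^j|$ for all sufficiently small $h$, so it suffices to bound the error of the reciprocal eigenvalues of the compact operators.

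Next I would verify the hypotheses of \cite[Section 8]{Babuska-Osborn}. The crucial point is \emph{norm} (not merely pointwise) convergence $\|T-T_h\|_{\mathcal L(H^{\alpha/2}\II)}\to0$. This follows from the quasi-optimality of the Galerkin solution together with the regularity pickup: for $f\in H^{\alpha/2}\II\hookrightarrow L^2\II$, Theorems \ref{thm:regrl} and \ref{thm:regcap} place $Tf$ in a space strictly smoother than $H^{\alpha/2}\II$ (for instance $H^{\alpha-1+\beta}\II$ with $\beta\in(1-\alpha/2,1/2)$ in the Riemann--Liouville case, where $\alpha>1$ makes this interval nonempty), while $T_hf$ is its Galerkin projection; hence quasi-optimality and Lemma \ref{lem:fem} give $\|(T-T_h)f\|_{H^{\alpha/2}\II}\le c\,h^{\kappa}\|f\|_{H^{\alpha/2}\II}$ with $\kappa>0$, uniformly on the unit ball. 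Norm convergence then guarantees that exactly $m$ eigenvalues $\mu_h^1,\dots,\mu_h^m$ of $T_h$, counted with algebraic multiplicity, cluster at $\mu$, and that the ascent $\delta=\delta(\mu)$ of the statement governs the splitting of the cluster.

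With these ingredients in place, the abstract eigenvalue estimate of \cite[Theorem 8.3]{Babuska-Osborn} applies and yields, for each $j$,
$$
|\mu-\mu_h^j|\le c\,\big(\epsilon_h\,\epsilon_h^\ast\big)^{1/\delta},
$$
where $\epsilon_h$ and $\epsilon_h^\ast$ are the best-approximation quantities over the direct and adjoint generalized eigenspaces $M(\lambda)$ and $M^\ast(\lambda)$. The product structure encodes the non-symmetry of $a(\cdot,\cdot)$: one factor measures how well $U_h$ captures $M(\lambda)$ and the other how well $V_h$ captures $M^\ast(\lambda)$, the latter resting on the adjoint regularity of Theorems \ref{thm:regadjrl} and \ref{thm:regcapadj}. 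Substituting the rates of Lemma \ref{lem:femeps} and transferring back via $|\lambda-\lambda_h^j|\le c\,|\mu-\mu_h^j|$ then completes the argument: in the Caputo case $\epsilon_h\epsilon_h^\ast\le c\,h^{[\min(\alpha+s,2)-\alpha/2]+r}$ with $r<\alpha/2-1/2$, so the exponent can be pushed up to any $\gamma<\min(\alpha+s,2)-1/2$; in the Riemann--Liouville case $\epsilon_h\epsilon_h^\ast\le c\,h^{2r}$, giving any $\gamma<\alpha-1$. Raising to the power $1/\delta$ produces the stated rate $h^{\gamma/\delta}$.

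The step I expect to demand the most care is not the final substitution but the correct invocation of the abstract theory when $\delta>1$: one must read $\epsilon_h,\epsilon_h^\ast$ as suprema over the full \emph{generalized} eigenspaces (as in the definitions preceding Lemma \ref{lem:femeps}), confirm that the regularity estimates of Section \ref{ssec:reg} apply to every generalized eigenvector rather than only to genuine eigenfunctions, and check that the ascent bookkeeping of \cite{Babuska-Osborn} indeed places the power $1/\delta$ on the product $\epsilon_h\epsilon_h^\ast$ and not on a single factor. Once these hypotheses are verified, the bound on $|\mu-\mu_h^j|$ is a direct quotation of \cite[Theorem 8.3]{Babuska-Osborn}.
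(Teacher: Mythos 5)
Your proposal is correct and follows essentially the same route as the paper, whose proof of Theorem \ref{eigencval-error} consists precisely of citing \cite[Theorem 8.3 and pp. 683--714]{Babuska-Osborn} together with Lemma \ref{lem:femeps}; you have simply made explicit the reciprocal correspondence $\mu=1/\lambda$, the norm convergence $T_h\to T$, and the substitution of the rates for $\epsilon_h$ and $\epsilon_h^\ast$, all of which the paper leaves implicit. The exponent bookkeeping in both cases matches the stated rates exactly.
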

\begin{remark}
In the case of a zero potential $q \equiv0$, it is known that the eigenvalues $\lambda$ are
zeros of the Mittag-Leffler functions $E_{\alpha,\alpha}(-\lambda)$ and $E_{\alpha,2}(-\lambda)$
for the Riemann-Liouville and Caputo case, respectively. This connection together with the
exponential asymptotics of Mittag-Leffler functions allows one to show that eigenvalues with sufficiently
large magnitudes are simple \cite{Djrbashian:1993,Sedletskii:2000,JinRundell:2012}, and hence
the multiplicity $\delta=1$. In our computations we observed that for all potential $q$
the eigenvalues are simple, i.e., $\delta=1$.
\end{remark}

\begin{remark}
The (theoretical) rate of convergence for the case of Riemann-Liouville fractional derivative is lower than
that for the Caputo case. This is due to limited smoothing property for the Riemann-Liouville fractional
derivative operator. It naturally suggests that an adaptive procedure involving a proper grid refinement
or an augmentation of the solution and test spaces should be used. Nonetheless, a second-order
convergence is observed for the eigenvalue approximations, even though the eigenfunction approximations
in $H^{\alpha/2}\II$-norm are less accurate due to the conceived singularity, especially for $\alpha$ close
to unity. In other words, the abstract theory gives only suboptimal convergence rates.
\end{remark}

\section{Numerical experiments}\label{sec:numerics}

In this part we present numerical experiments for the FSLP to illustrate the performance  of the
finite element method. We consider the following three different potentials:
\begin{itemize}
  \item[(a)] a zero potential $q_1(x)=0$;
  \item[(b)] a smooth potential $q_2(x)=20x^3(1-x)e^{-x}$;
  \item[(c)] a discontinuous potential $q_3(x)=-2x\chi_{[0,1/5]}+(-4/5+2x)\chi_{[1/5,2/5]}+\chi_{[3/5,4/5]}$.
\end{itemize}
The potentials $q_1$ and $q_2$ are smooth and belong to the space $H^1_0\II$, while the potential $q_3$
is piecewise smooth and it belongs to the space $H_0^s\II$ for any $s<1/2$. The profiles of
the potentials $q_2$ and $q_3$ are shown in Fig. \ref{fig:pot}.  These examples are used to
illustrate the influence of the potential on the convergence behavior of the finite
element method.

As was mentioned in Section \ref{sec:introduction}, in the case of a zero potential $q(x)\equiv 0$,
the eigenvalues are known to be the zeros of the Mittag-Leffler function $E_{\alpha,2}(-\lambda)$
for the Caputo derivative (respectively $E_{\alpha,\alpha}(-\lambda)$ for the Riemann-Liouville
derivative). This can be numerically verified directly, cf. Fig. \ref{fig:mitlef}. Nonetheless,
there is no known accurate solver for locating these zeros since accurately evaluating Mittag-Leffler
functions is highly nontrivial, and further, it does not cover the interesting case of a general $q$.

\begin{figure}[h!]
  \centering
  \includegraphics[trim = .5cm 0cm 1cm 0cm, clip=true,width=7cm]{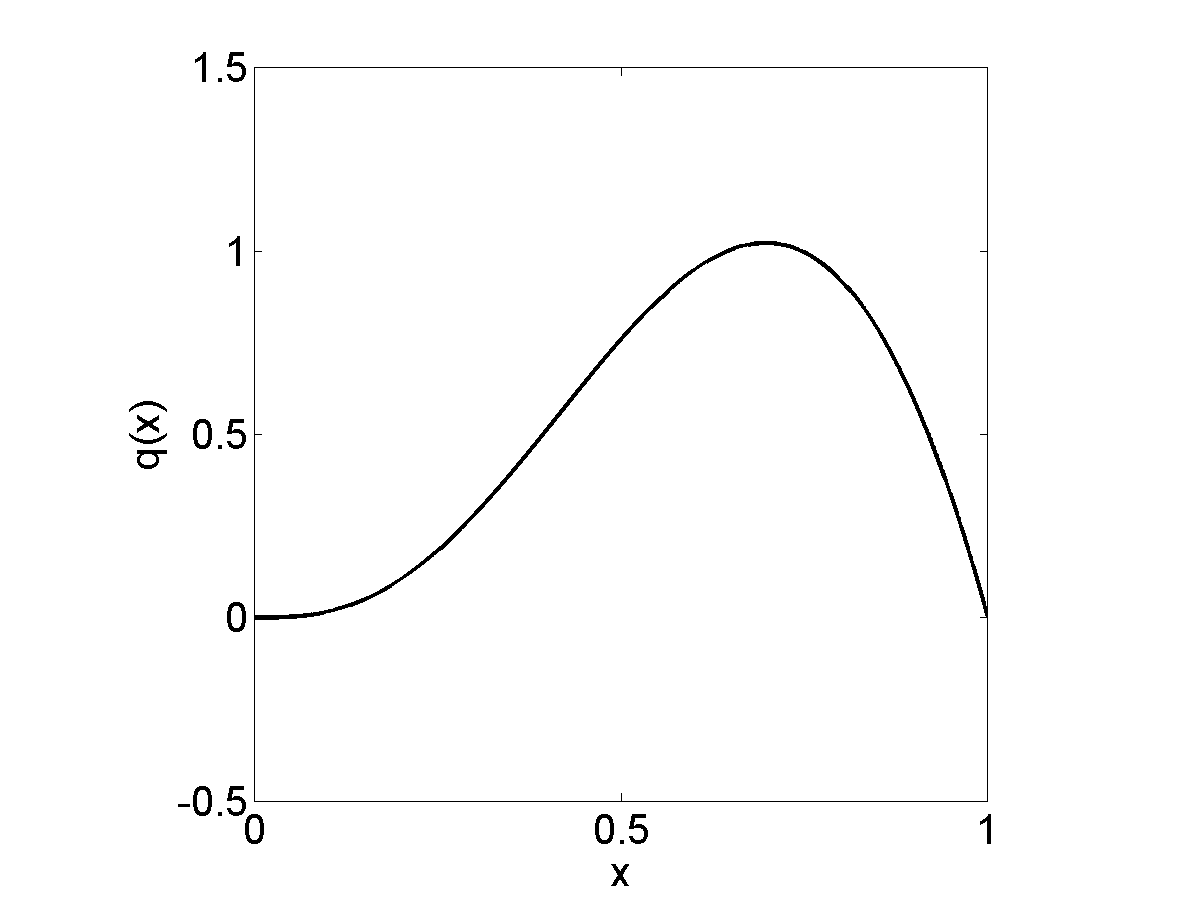}
  \includegraphics[trim = .5cm 0cm 1cm 0cm, clip=true,width=7cm]{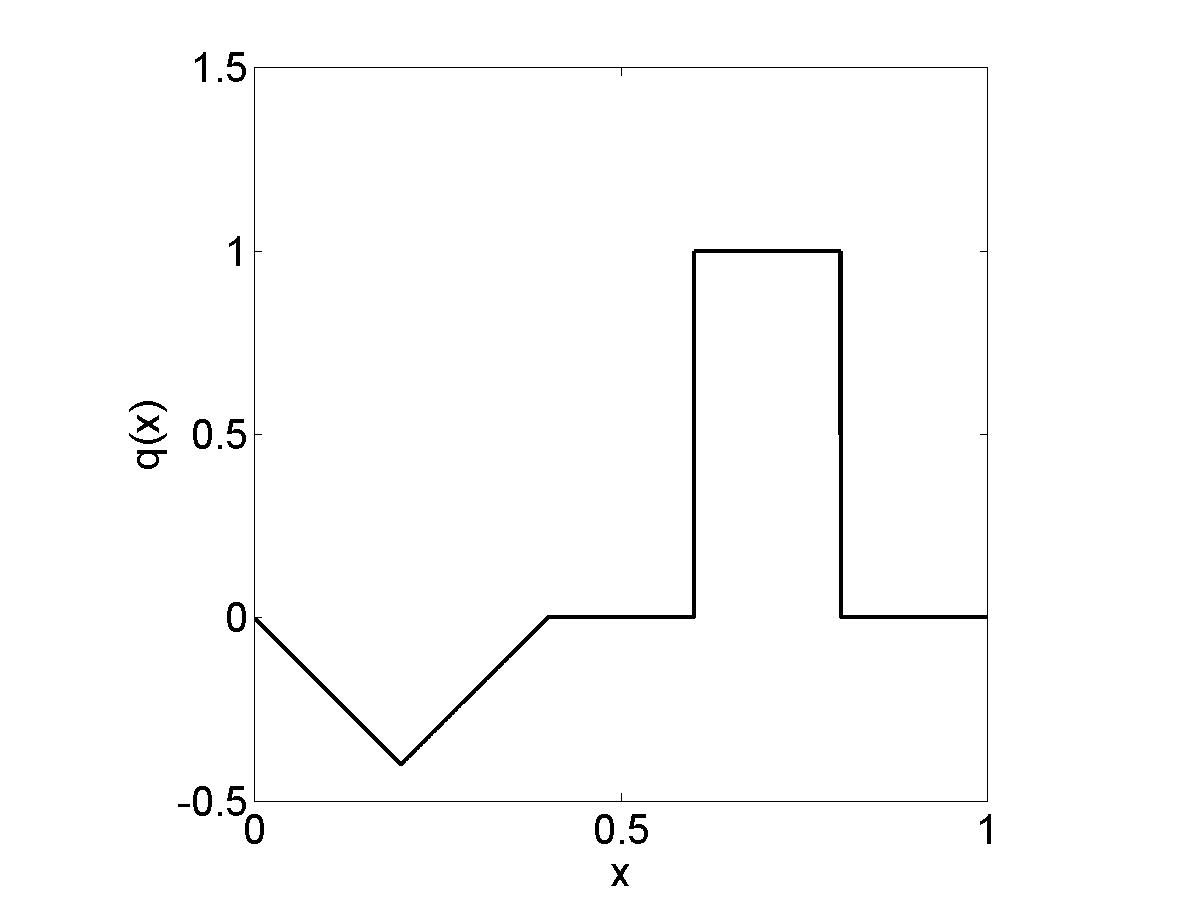}
  \caption{The profile of the potentials $q_2$ (left) and $q_3$ (right).}\label{fig:pot}
\end{figure}

\begin{figure}[h!]
  \centering
  \begin{tabular}{cc}
  \includegraphics[trim = 1cm 0cm 1cm 0cm, clip=true,width=7cm]{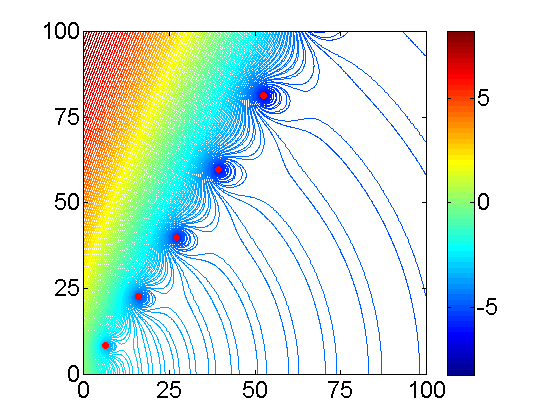}&
  \includegraphics[trim = 1cm 0cm 1cm 0cm, clip=true,width=7cm]{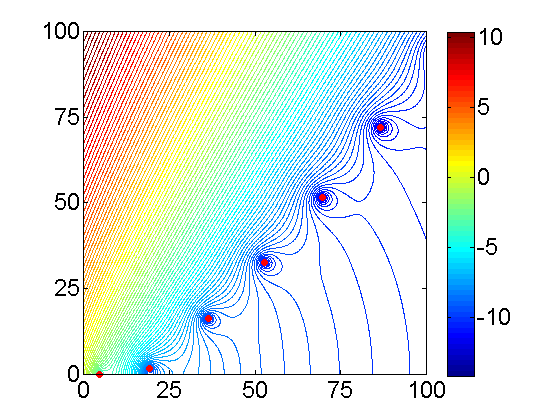}\\
     Caputo case & Riemann-Liouville case
  \end{tabular}
  \caption{Zeros of Mittag-Leffler function $E_{\alpha,2}(-\lambda)$ and the variational eigenvalues
  for $q=0$ for $\alpha=4/3$. The red dots are the eigenvalues computed by the finite element method, and
  the contour lines plot the value of the functions $\log|E_{\alpha,2}(-\lambda)|$ (left) and
  $\log|E_{\alpha,\alpha}(-\lambda)|$ (right) over the complex domain. The red dots lie in the
  deep blue wells, which correspond to zeros of the Mittag-Leffler functions.}\label{fig:mitlef}
\end{figure}

Throughout we partition the domain into a uniform mesh with $m$ equal subintervals,
i.e., a mesh size $h=1/m$. We measure the accuracy of an FEM approximation $\lambda_h$,
by the absolute error, i.e., $e(\lambda_h) = |\lambda-\lambda_h|$ and the reference value
$\lambda$ is computed on a very refined mesh with $m=10240$ and checked against that computed by
the quasi-Newton method developed in \cite{JinRundell:2012}. All the computations were performed using
\texttt{MATLAB R2009c} on a laptop with a dual-core $6.00$G RAM memory. The generalized eigenvalue
problems \eqref{eqn:eigfem} were solved by built-in \texttt{MATLAB} function \texttt{eigs} with a
default tolerance. Below we shall discuss the cases of Caputo and Riemann-Liouville derivative separately,
since their eigenfunctions have very different regularity and hence, one naturally expects that the
approximations exhibit different convergence behavior. Finally, we discuss possible extensions
and as an application of the finite element method, we study also the behavior of the fractional SLP with a
Riemann-Liouville derivative.

\subsection{Caputo derivative case}

By the solution theory in Section \ref{sec:prelim}, for the smooth potentials $q_1$ and $q_2$, the
eigenfunctions are in the space $H^{\al+1}\II\cap \Hd{\alpha/2}\II$, whereas for the discontinuous
potential $q_3$, the eigenfunctions lie in the space $H^{\alpha+s}\II\cap \Hd{\alpha/2}\II$ for
any $s\in[0,1/2)$. Hence they can be well approximated by uniform meshes. Further, these enhanced
regularity estimates predict a convergence rate of order $\min(\alpha+s,2)-1/2$ (with $s\in [0,1]$
being an exponent such that $q\in H_0^\beta\II$) for the approximate eigenvalues. In
particular, theoretically, the best possible convergence rate is $O(h^{3/2})$.

In Tables \ref{tab:al66capq1}-\ref{tab:al66capq3} we present the errors of the first ten eigenvalues for
$\alpha=5/3$ and different mesh sizes, where the empirical convergence rates are also shown. For all three
potentials, there are only two real eigenvalues, and the rest appears as complex conjugate pairs. All the computed
eigenvalues are simple.
Numerically, a second-order convergence is observed for all the eigenvalues. Further, the presence of a
potential term influences the errors very little: they are almost identical for all three potentials, as are the convergence rates.
The empirical rate is at least one half order higher than the theoretical one. The mechanism of
the ``superconvergence'' phenomenon of the method still awaits explanation.

These observations were also confirmed by the numerical results for other $\alpha$ values; see Fig. \ref{fig:alcap}
for the convergence behavior for four different $\alpha$ values, in case of the discontinuous potential $q_3$.
As the $\alpha$ value increases towards two, the number of real eigenvalues (which appear always in pair) also
increases accordingly. The overall convergence seems relatively independent of the $\alpha$ values,
except for $\alpha=7/4$, for which there are four real eigenvalues. Here the convergence rates for the first
and second eigenvalues are different, with one slightly below two and the other slightly above two; see
also Table \ref{tab:al75capq3}. A similar behavior can be observed for the third and fourth eigenvalues, although
the difference is less dramatic. The rest of the eigenvalues exhibits a second-order convergence, consistent
with other cases. Hence the abnormality is only observed for the ``newly'' emerged real eigenvalues.
The cause of the abnormal convergence behavior in the transient region is still unclear.

\begin{table}[h!]
\centering
\caption{The errors of the first ten eigenvalues for $\alpha=5/3$, $q_1$, Caputo derivative,
computed on a uniform mesh of mesh size $1/(10\times2^k)$, $k=3,\ldots,8$. The first two
eigenvalues are real and the rest consists of complex conjugates.}\label{tab:al66capq1}
\begin{tabular}{ccccccccc}
\hline
  $e\backslash k$ & 3 & 4 & 5 & 6 & 7 & 8 & rate\\
\hline
  $\lambda_1$     & 1.03e-3 & 2.55e-4 & 6.33e-5 & 1.57e-5 & 3.89e-6 & 9.37e-7 & 2.01\\
  $\lambda_2$     & 1.64e-3 & 3.33e-4 & 6.80e-5 & 1.39e-5 & 2.87e-6 & 6.02e-7 & 2.28\\
  $\lambda_{3,4}$ & 3.75e-2 & 8.19e-3 & 1.82e-3 & 4.11e-4 & 9.35e-5 & 2.07e-5 & 2.15\\
  $\lambda_{5,6}$ & 1.58e-1 & 3.32e-2 & 7.08e-3 & 1.53e-3 & 3.34e-4 & 7.18e-5 & 2.20\\
  $\lambda_{7,8}$ & 4.87e-1 & 1.00e-1 & 2.09e-2 & 4.40e-3 & 9.35e-4 & 1.95e-4 & 2.24\\
  $\lambda_{9,10}$& 1.18e0  & 2.39e-1 & 4.92e-2 & 1.02e-2 & 2.13e-3 & 4.37e-4 & 2.27\\
\hline
\end{tabular}
\end{table}

\begin{table}[h!]
\centering
\caption{The errors of the first 10 eigenvalues for $\alpha=5/3$, $q_2$, Caputo derivative,
computed on a uniform mesh of mesh size $1/(10\times2^k)$, $k=3,\ldots,8$. The first two
eigenvalues are real and the rest consists of complex conjugates.}\label{tab:al66capq2}
\begin{tabular}{ccccccccc}
\hline
  $e\backslash k$ & 3 & 4 & 5 & 6 & 7 & 8 & rate\\
\hline
  $\lambda_1$     & 1.11e-3 & 2.75e-4 & 6.79e-5 & 1.68e-5 & 4.14e-6 & 9.95e-7 & 2.02\\
  $\lambda_2$     & 1.75e-3 & 3.59e-4 & 7.38e-5 & 1.53e-5 & 3.18e-6 & 6.74e-7 & 2.27\\
  $\lambda_{3,4}$ & 3.75e-2 & 8.21e-3 & 1.83e-3 & 4.13e-4 & 9.39e-5 & 2.08e-5 & 2.15\\
  $\lambda_{5,6}$ & 1.58e-1 & 3.33e-2 & 7.09e-3 & 1.53e-3 & 3.35e-4 & 7.20e-5 & 2.21\\
  $\lambda_{7,8}$ & 4.87e-1 & 1.00e-1 & 2.09e-2 & 4.40e-3 & 9.35e-4 & 1.96e-4 & 2.24\\
  $\lambda_{9,10}$& 1.18e0  & 2.39e-1 & 4.92e-2 & 1.02e-2 & 2.13e-3 & 4.37e-4 & 2.27\\
\hline
\end{tabular}
\end{table}

\begin{table}[h!]
\centering
\caption{The error of the first ten eigenvalues for $\alpha=5/3$, $q_3$, Caputo derivative,
computed on a uniform mesh of mesh size $1/(10\times2^k)$, $k=3,\ldots,8$. The first two
eigenvalues are real and the rest consists of complex conjugates.}\label{tab:al66capq3}
\begin{tabular}{ccccccccc}
\hline
  $e\backslash k$ & 3 & 4 & 5 & 6 & 7 & 8 & rate\\
\hline
  $\lambda_1$     & 1.10e-3 & 2.71e-4 & 6.69e-5 & 1.66e-5 & 4.08e-6 & 9.81e-7 & 2.02\\
  $\lambda_2$     & 1.73e-3 & 3.53e-4 & 7.23e-5 & 1.48e-5 & 3.08e-6 & 6.47e-7 & 2.27\\
  $\lambda_{3,4}$ & 3.79e-2 & 8.31e-3 & 1.85e-3 & 4.18e-4 & 9.52e-5 & 2.11e-5 & 2.14\\
  $\lambda_{5,6}$ & 1.58e-1 & 3.33e-2 & 7.09e-3 & 1.53e-3 & 3.35e-4 & 7.20e-5 & 2.20\\
  $\lambda_{7,8}$ & 4.87e-1 & 1.00e-1 & 2.08e-2 & 4.40e-3 & 9.34e-4 & 1.95e-4 & 2.24\\
  $\lambda_{9,10}$& 1.18e0  & 2.39e-1 & 4.92e-2 & 1.02e-2 & 2.13e-3 & 4.37e-4 & 2.27\\
\hline
\end{tabular}
\end{table}

\begin{figure}[h!]
  \centering
  \begin{tabular}{cc}
  \includegraphics[trim = .5cm 0cm 1cm 0cm, clip=true,width=8cm]{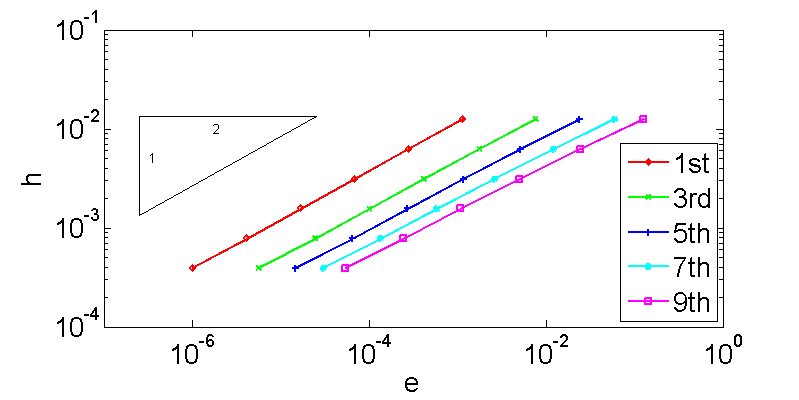}
  &\includegraphics[trim = .5cm 0cm 1cm 0cm, clip=true,width=8cm]{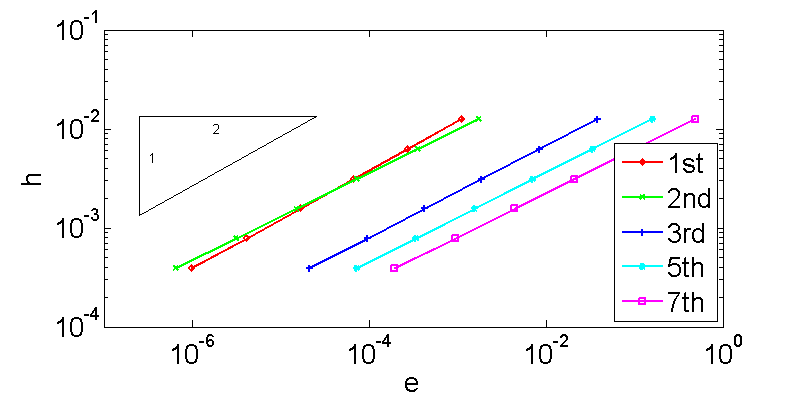}\\
    $\alpha = 4/3$ & $\alpha=5/3$\\
    \includegraphics[trim = .5cm 0cm 1cm 0cm, clip=true,width=8cm]{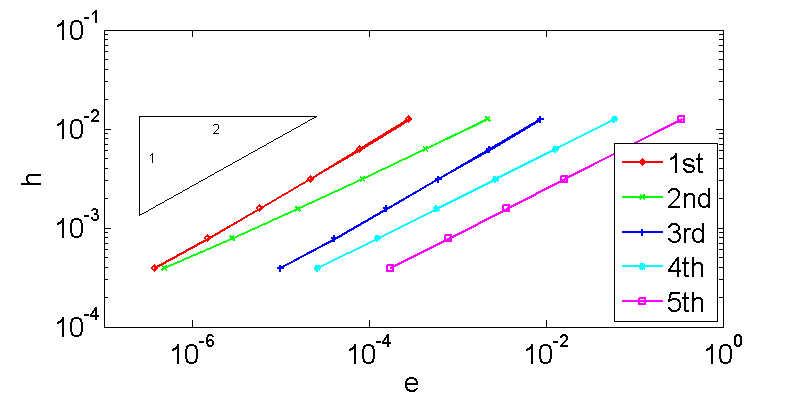}
  &\includegraphics[trim = .5cm 0cm 1cm 0cm, clip=true,width=8cm]{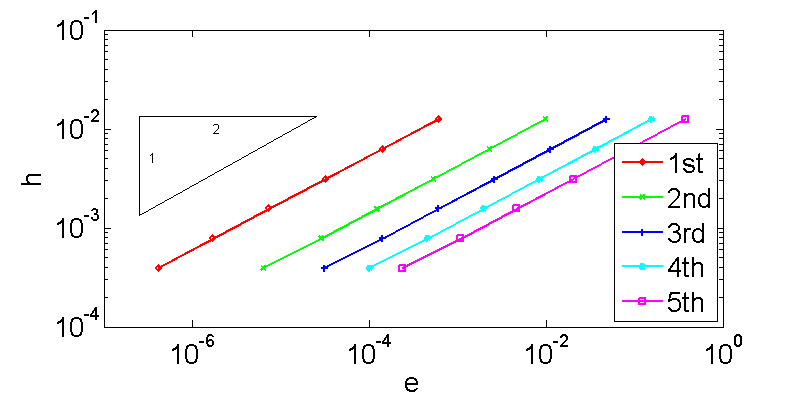}\\
  $\alpha= 7/4$ & $\alpha = 19/10$
  \end{tabular}
  \caption{The convergence of the finite element approximations of the eigenvalues with the potential $q_3$,
    for Caputo derivative with $\alpha=4/3$, $5/3$, $7/4$, and $19/10$. }\label{fig:alcap}
\end{figure}

\begin{table}[h!]
\centering
\caption{The errors of the first ten eigenvalues for $\alpha=7/4$, $q_3$, Caputo derivative,
computed on a uniform mesh of mesh size $1/(10\times2^k)$, $k=3,\ldots,8$. The first four
eigenvalues are real and the rest consists of complex conjugates.}\label{tab:al75capq3}
\begin{tabular}{ccccccccc}
\hline
  $e\backslash k$ & 3 & 4 & 5 & 6 & 7 & 8 & rate\\
  \hline
    $\lambda_1$     & 2.79e-4 & 7.84e-5 & 2.14e-5 & 5.73e-6 & 1.50e-6 & 3.74e-7 & 1.92\\
    $\lambda_2$     & 2.19e-3 & 4.30e-4 & 8.27e-5 & 1.54e-5 & 2.77e-6 & 4.79e-7 & 2.40\\
    $\lambda_3$     & 8.63e-3 & 2.29e-3 & 6.05e-4 & 1.58e-4 & 4.06e-5 & 9.85e-6 & 1.95\\
    $\lambda_4$     & 5.89e-2 & 1.26e-2 & 2.71e-3 & 5.80e-4 & 1.24e-4 & 2.57e-5 & 2.22\\
    $\lambda_{5,6}$ & 3.44e-1 & 7.44e-2 & 1.63e-2 & 3.60e-3 & 7.96e-4 & 1.72e-4 & 2.18\\
    $\lambda_{7,8}$ & 8.46e-1 & 1.81e-1 & 3.89e-2 & 8.41e-3 & 1.81e-3 & 3.82e-4 & 2.21\\
    $\lambda_{9,10}$& 2.06e0  & 4.38e-1 & 9.35e-2 & 2.00e-2 & 4.28e-3 & 8.93e-4 & 2.22\\
  \hline
\end{tabular}
\end{table}

As was mentioned above, the Caputo eigenfunctions are fairly regular. We illustrate this in
Fig. \ref{fig:eigfcn:cap}, where the first, fifth and tenth eigenfunctions for three different
$\alpha$ values, i.e., $4/3$, $5/3$ and $19/10$, were shown. The eigenfunctions are normalized
to have unit $L^2\II$-norm. The profiles of the eigenfunctions generally resemble sinusoidal
functions, but the magnitudes are attenuated around $x=1$, which is related to the asymptotics
of the Mittag-Leffler function $xE_{\alpha,2}(-\lambda_n x^\alpha)$, the eigenfunction in the
case of a zero potential. The degree of attenuation depends
crucially on the order $\alpha$ of the fractional derivative. Further, as the eigenvalue increases,
the corresponding eigenfunctions are increasingly oscillatory, and the number of interior
zero differs by one for the real and imaginary parts of a genuinely complex eigenfunction.

\begin{figure}[htp!]
  \centering
  \begin{tabular}{cc}
    \includegraphics[trim = 1cm 0cm 2cm 0cm, clip=true,width=6cm]{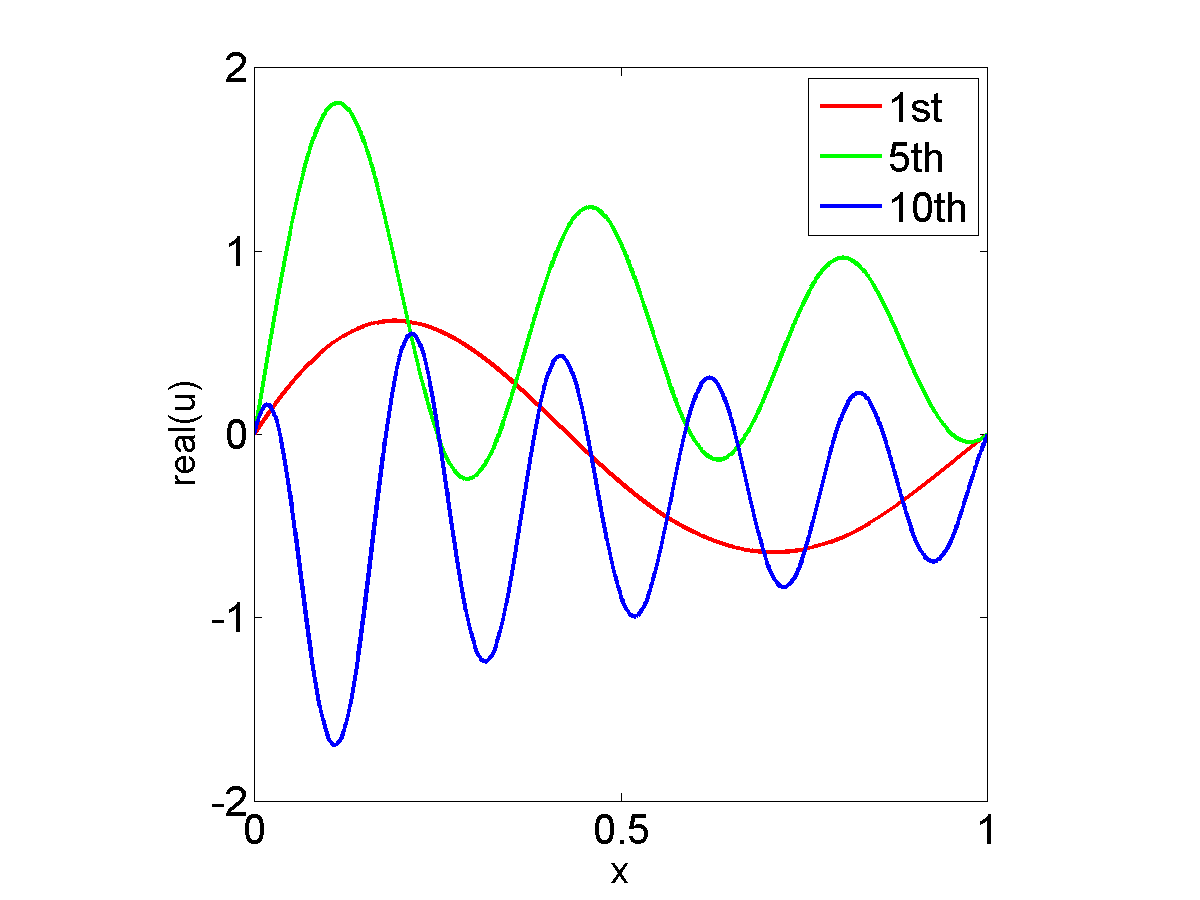} &
    \includegraphics[trim = 1cm 0cm 2cm 0cm, clip=true,width=6cm]{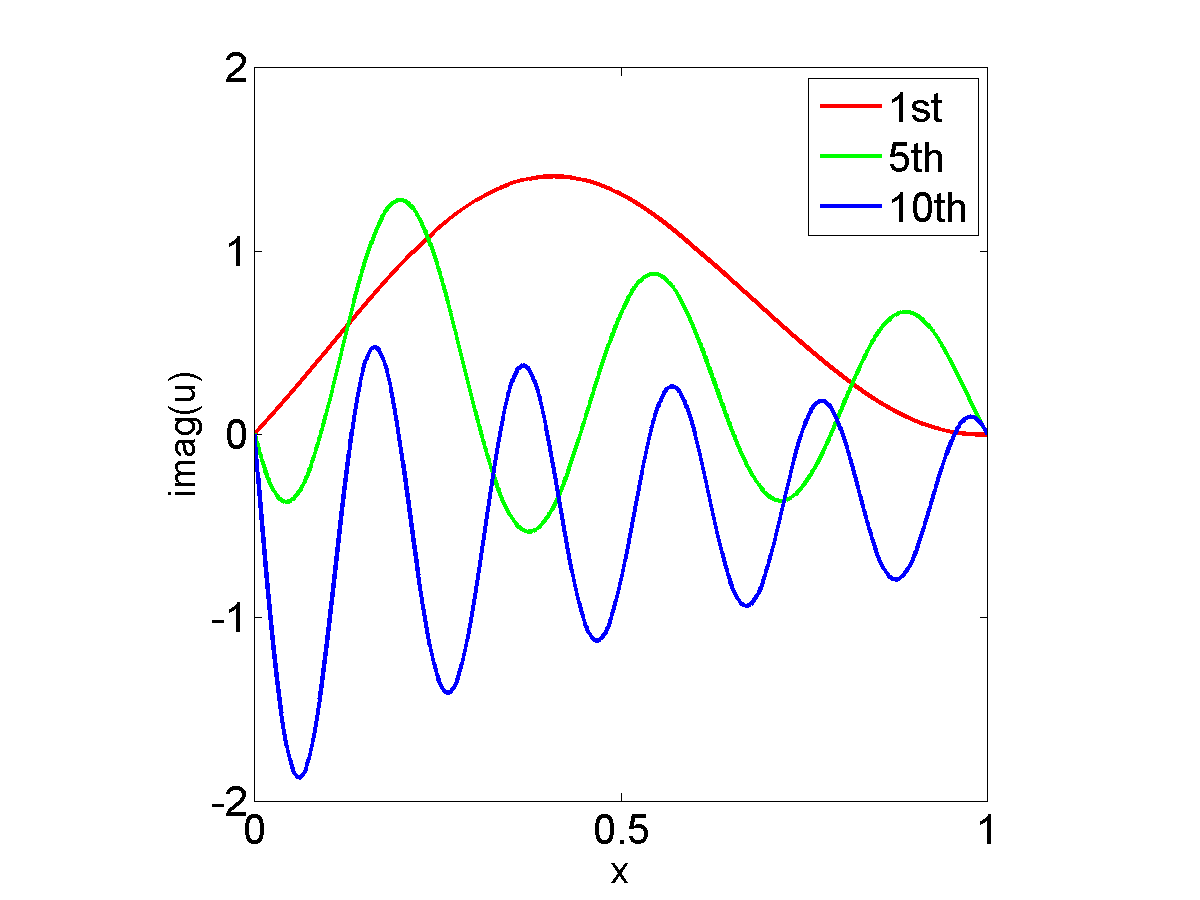}\\
    \includegraphics[trim = 1cm 0cm 2cm 0cm, clip=true,width=6cm]{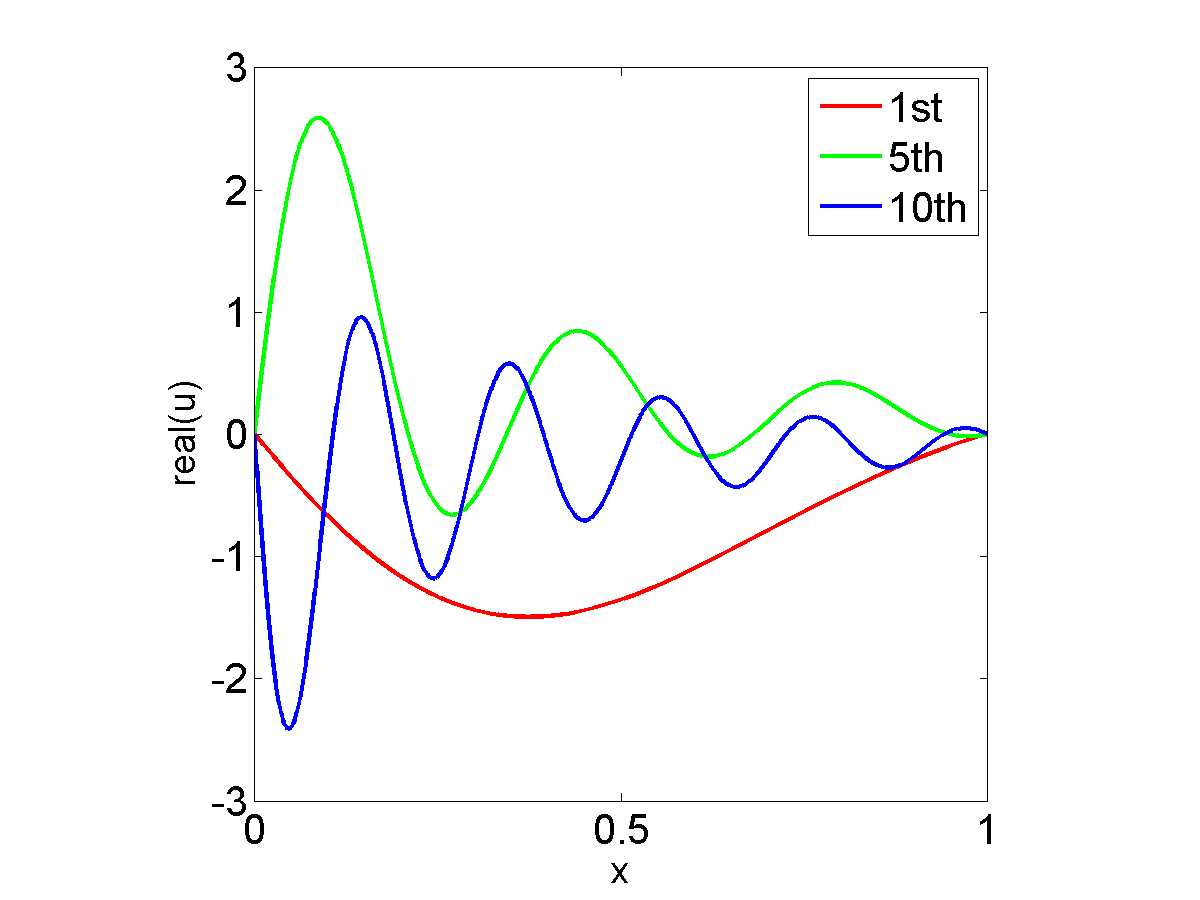} &
    \includegraphics[trim = 1cm 0cm 2cm 0cm, clip=true,width=6cm]{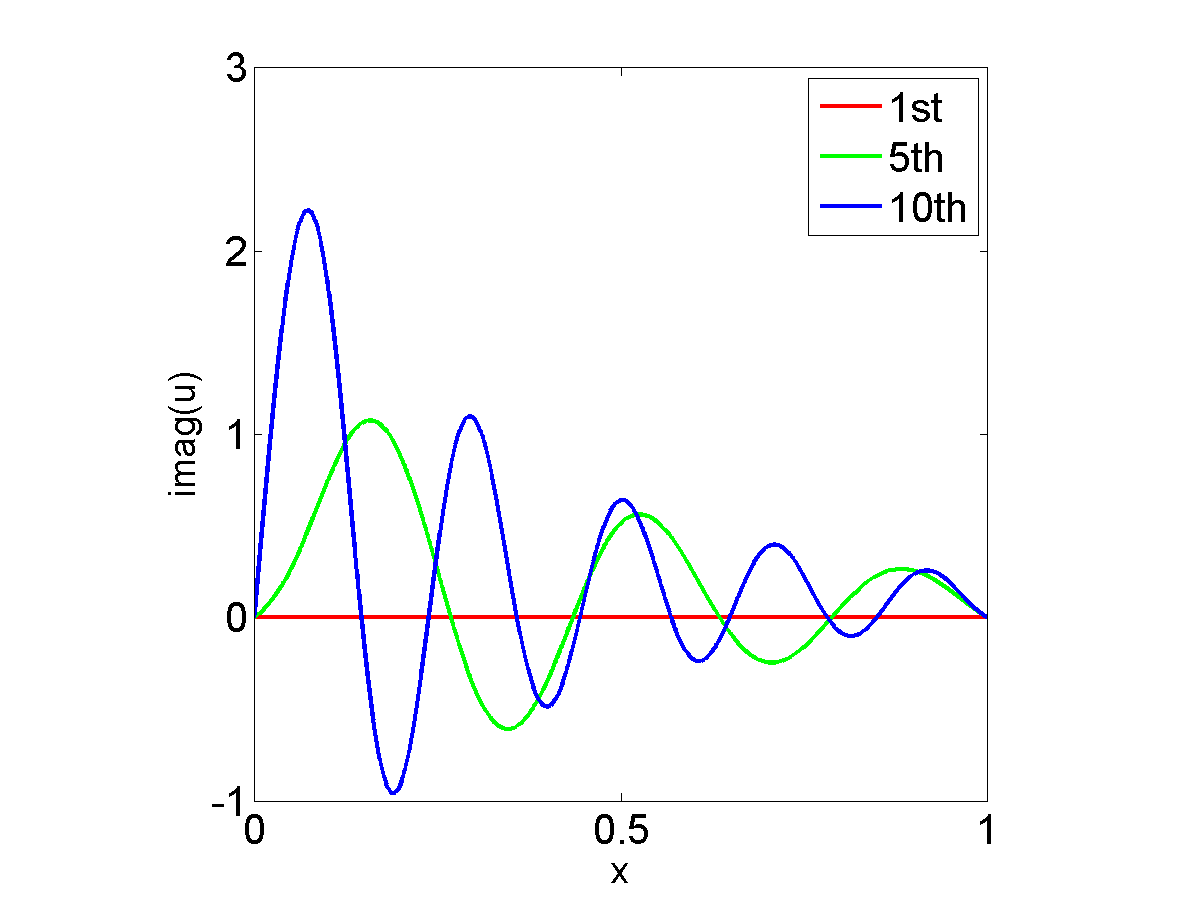}\\
    \includegraphics[trim = 1cm 0cm 2cm 0cm, clip=true,width=6cm]{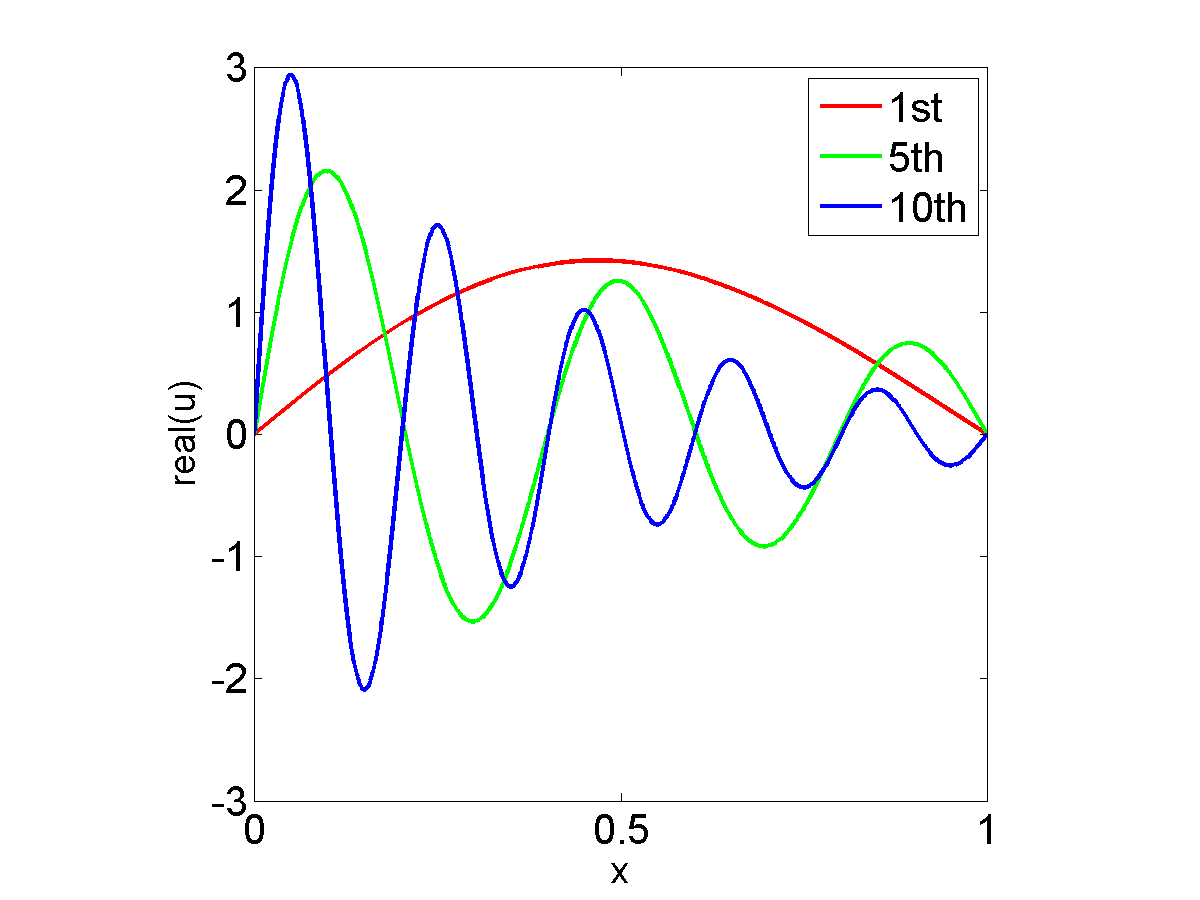} &
    \includegraphics[trim = 1cm 0cm 2cm 0cm, clip=true,width=6cm]{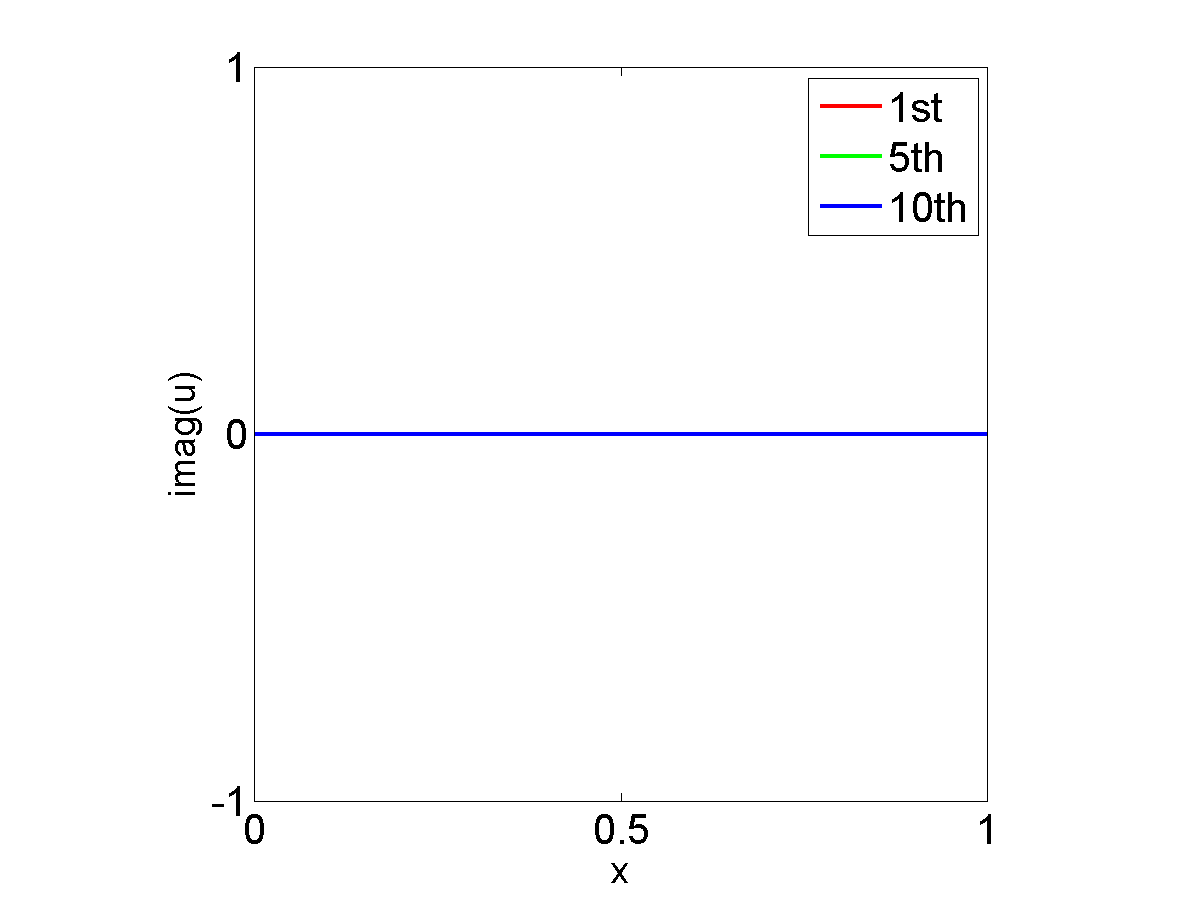}\\
     real part & imaginary part
  \end{tabular}
  \caption{The profile of the first, fifth and tenth eigenfunctions in case of $q_3$, $\alpha=4/3$
  (top row), $\alpha=5/3$ (middle row) and $\alpha=19/10$ (bottom row), Caputo case.}\label{fig:eigfcn:cap}
\end{figure}

\subsection{Riemann-Liouville derivative case}
Now we turn to the numerically more challenging case of Riemann-Liouville derivative. By the regularity theory
in Section \ref{ssec:reg}, the eigenfunctions are less regular, with an inherent singularity concentrated at
the origin and the degree of singularity is precisely of order $x^{\alpha-1}$. Hence, one would naturally expect
a slow convergence of the finite element approximations with a uniform mesh. Our experiments indicate that
finite element approximations of the eigenfunctions for $\alpha$ close to unity indeed suffer from pronounced
oscillations near the origin. The finite element method does converge, and hence the oscillations will go away
as the mesh refines. Nonetheless, the oscillations still cast doubt into the accuracy of the eigenvalue approximations.

The numerical results for $\alpha=5/3$ with the three potentials are presented in Tables
\ref{tab:al66riemq1}-\ref{tab:al66riemq3}. For all three potentials, the first nine eigenvalues are
real and the rest appears as conjugate pairs, and in the tables, we show the convergence results only
for the first six eigenvalues. Surprisingly, the eigenvalue approximations exhibit
consistently a second-order convergence, identical with that for the Caputo derivative.
Like before, the convergence rate is almost independent of the presence of a potential term.
The convergence behavior is further illustrated in Fig. \ref{fig:alriem} for different $\alpha$ values
in the case of the discontinuous potential $q_3$. The preceding observation remains largely true,
except within a ``transient'' region $1.70\leq \alpha\leq 1.85$ to which the value $\alpha=7/4$
belongs: the method still converges almost at the same rate, but the convergence is not as steady
as for other cases. However, although not presented, we would like to remark that the convergence
for larger eigenvalues is rather steady. We also emphasize that the good convergence of the
eigenvalue approximations, especially for $\alpha$ close to one, has not been theoretically backed up.

\begin{table}[h!]
\centering
\caption{The errors of the first six eigenvalues, which are all real, for $\alpha=5/3$, $q_1$,
Riemann-Liouville derivative, computed on a uniform mesh of mesh size $1/(10\times2^k)$,
$k=3,\ldots,8$.}\label{tab:al66riemq1}
\begin{tabular}{ccccccccc}
\hline
  $e\backslash k$ & 3 & 4 & 5 & 6 & 7 & 8 & rate\\
\hline
  $\lambda_1$ & 3.53e-4 & 9.36e-5 & 2.44e-5 & 6.31e-6 & 1.60e-6 & 3.95e-7 & 1.95\\
  $\lambda_2$ & 2.73e-3 & 7.30e-4 & 1.92e-4 & 4.98e-5 & 1.27e-5 & 3.09e-6 & 1.96\\
  $\lambda_3$ & 7.33e-3 & 1.99e-3 & 5.32e-4 & 1.39e-4 & 3.58e-5 & 8.64e-6 & 1.96\\
  $\lambda_4$ & 1.81e-2 & 4.80e-3 & 1.27e-3 & 3.31e-4 & 8.46e-5 & 2.05e-5 & 1.96\\
  $\lambda_5$ & 2.62e-2 & 7.11e-3 & 1.93e-3 & 5.16e-4 & 1.35e-4 & 3.39e-5 & 1.95\\
  $\lambda_6$ & 6.59e-2 & 1.65e-2 & 4.24e-3 & 1.09e-3 & 2.77e-4 & 6.81e-5 & 1.98\\
\hline
\end{tabular}
\end{table}

\begin{table}[h!]
\centering
\caption{The errors of the first six eigenvalues, which are all real, for $\alpha=5/3$, $q_2$,
Riemann-Liouville derivative, computed on a uniform mesh of mesh size $1/(10\times2^k)$,
$k=3,\ldots,8$.}\label{tab:al66riemq2}
\begin{tabular}{ccccccccc}
\hline
  $e\backslash k$ & 3 & 4 & 5 & 6 & 7 & 8 & rate\\
\hline
  $\lambda_1$ & 3.68e-4 & 9.78e-5 & 2.56e-5 & 6.61e-6 & 1.68e-6 & 4.13e-7 & 1.96\\
  $\lambda_2$ & 2.78e-3 & 7.43e-4 & 1.96e-4 & 5.08e-5 & 1.29e-5 & 3.14e-6 & 1.96\\
  $\lambda_3$ & 7.41e-3 & 2.01e-3 & 5.38e-4 & 1.41e-4 & 3.62e-5 & 8.75e-6 & 1.95\\
  $\lambda_4$ & 1.82e-2 & 4.83e-3 & 1.27e-3 & 3.33e-4 & 8.52e-5 & 2.06e-5 & 1.95\\
  $\lambda_5$ & 2.63e-2 & 7.15e-3 & 1.94e-3 & 5.20e-4 & 1.36e-4 & 3.41e-5 & 1.94\\
  $\lambda_6$ & 6.60e-2 & 1.65e-2 & 4.25e-3 & 1.09e-3 & 2.78e-4 & 6.83e-5 & 1.97\\
\hline
\end{tabular}
\end{table}

\begin{table}[h!]
\centering
\caption{The errors of the first six eigenvalues, which are all real, for $\alpha=5/3$, $q_3$,
Riemann-Liouville derivative, computed on a uniform mesh of a mesh size $1/(10\times2^k)$,
$k=3,\ldots,8$.}\label{tab:al66riemq3}
\begin{tabular}{ccccccccc}
\hline
  $e\backslash k$ & 3 & 4 & 5 & 6 & 7 & 8 & rate\\
\hline
  $\lambda_1$ & 3.60e-4 & 9.57e-5 & 2.50e-5 & 6.46e-6 & 1.64e-6 & 4.04e-7 & 1.96\\
  $\lambda_2$ & 2.78e-3 & 7.44e-4 & 1.96e-4 & 5.08e-5 & 1.29e-5 & 3.14e-6 & 1.95\\
  $\lambda_3$ & 7.35e-3 & 1.99e-3 & 5.34e-4 & 1.40e-4 & 3.59e-5 & 8.68e-6 & 1.95\\
  $\lambda_4$ & 1.80e-2 & 4.79e-3 & 1.27e-3 & 3.31e-4 & 8.45e-5 & 2.05e-5 & 1.95\\
  $\lambda_5$ & 2.63e-2 & 7.13e-3 & 1.94e-3 & 5.18e-4 & 1.35e-4 & 3.40e-5 & 1.94\\
  $\lambda_6$ & 6.59e-2 & 1.65e-2 & 4.24e-3 & 1.09e-3 & 2.77e-4 & 6.81e-5 & 1.97\\
\hline
\end{tabular}
\end{table}

\begin{figure}[h!]
  \centering
  \begin{tabular}{cc}
   \includegraphics[trim = .5cm 0cm 1cm 0cm, clip=true,width=8cm]{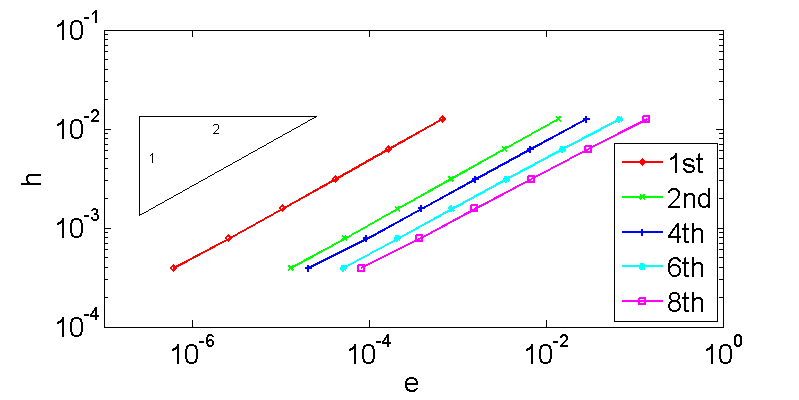}
  &\includegraphics[trim = .5cm 0cm 1cm 0cm, clip=true,width=8cm]{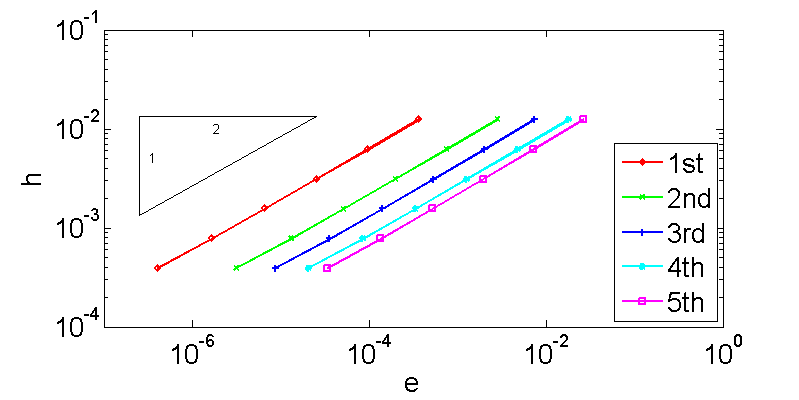}\\
    $\alpha = 4/3$ & $\alpha=5/3$\\
   \includegraphics[trim = .5cm 0cm 1cm 0cm, clip=true,width=8cm]{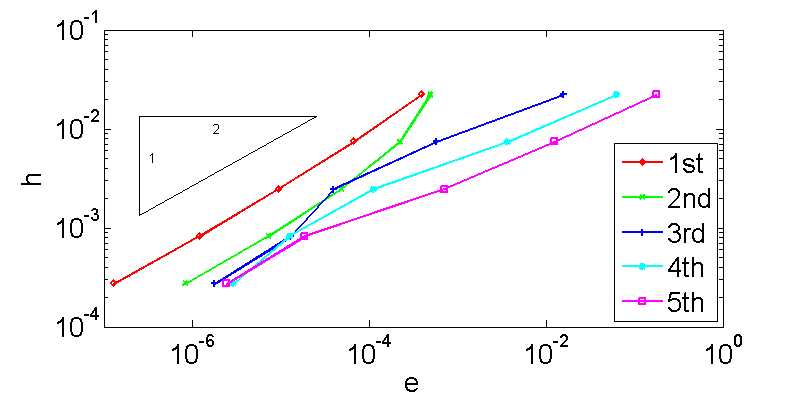}
  &\includegraphics[trim = .5cm 0cm 1cm 0cm, clip=true,width=8cm]{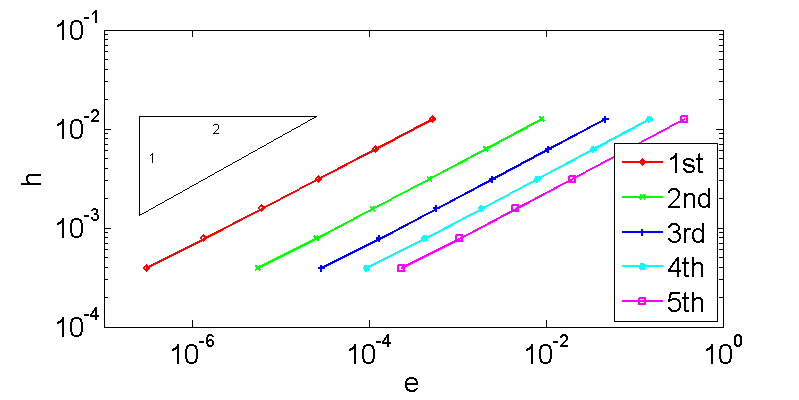}\\
  $\alpha= 7/4$ & $\alpha = 19/10$
  \end{tabular}
  \caption{The convergence of the finite element approximations of the eigenvalues with the potential $q_3$,
    for Riemann-Liouville derivative with $\alpha=4/3$, $5/3$, $7/4$, and $19/10$. }\label{fig:alriem}
\end{figure}

Our theory predicts that the eigenfunctions for the Caputo and Riemann-Liouville derivative
behave very differently. To confirm this, we plot eigenfunctions for the latter in Fig.
\ref{fig:eigfcn:riem}. One can observe from Figs. \ref{fig:eigfcn:cap} and \ref{fig:eigfcn:riem} that
apart from a stronger singularity at the origin, the Riemann-Liouville eigenfunctions are also
far more significantly attenuated towards $x=1$. In case of $q=0$, this
might be explained by the exponential asymtotics of the Mittag-Leffler function: the eigenfunctions
in the Caputo and Riemann-Liouville cases are given by $xE_{\alpha,2}(-\lambda_n x^\alpha)$ and
$x^{\alpha-1}E_{\alpha,\alpha}(-\lambda_n x^\alpha)$, respectively; the former decays only linearly,
whereas the latter decays quadratically \cite[pp. 43]{KilbasSrivastavaTrujillo:2006}; This can also
be deduced from Fig. \ref{fig:mitlef}: the wells, which correspond to zeros of the Mittag-Leffler functions,
run much deeper for Riemann-Liouville case. The difference in the decay behavior may reflect
completely different physics behind the two fractional derivatives.

\begin{figure}[htp!]
  \centering
  \begin{tabular}{cc}
    \includegraphics[trim = 1cm 0cm 2cm 0cm, clip=true,width=6cm]{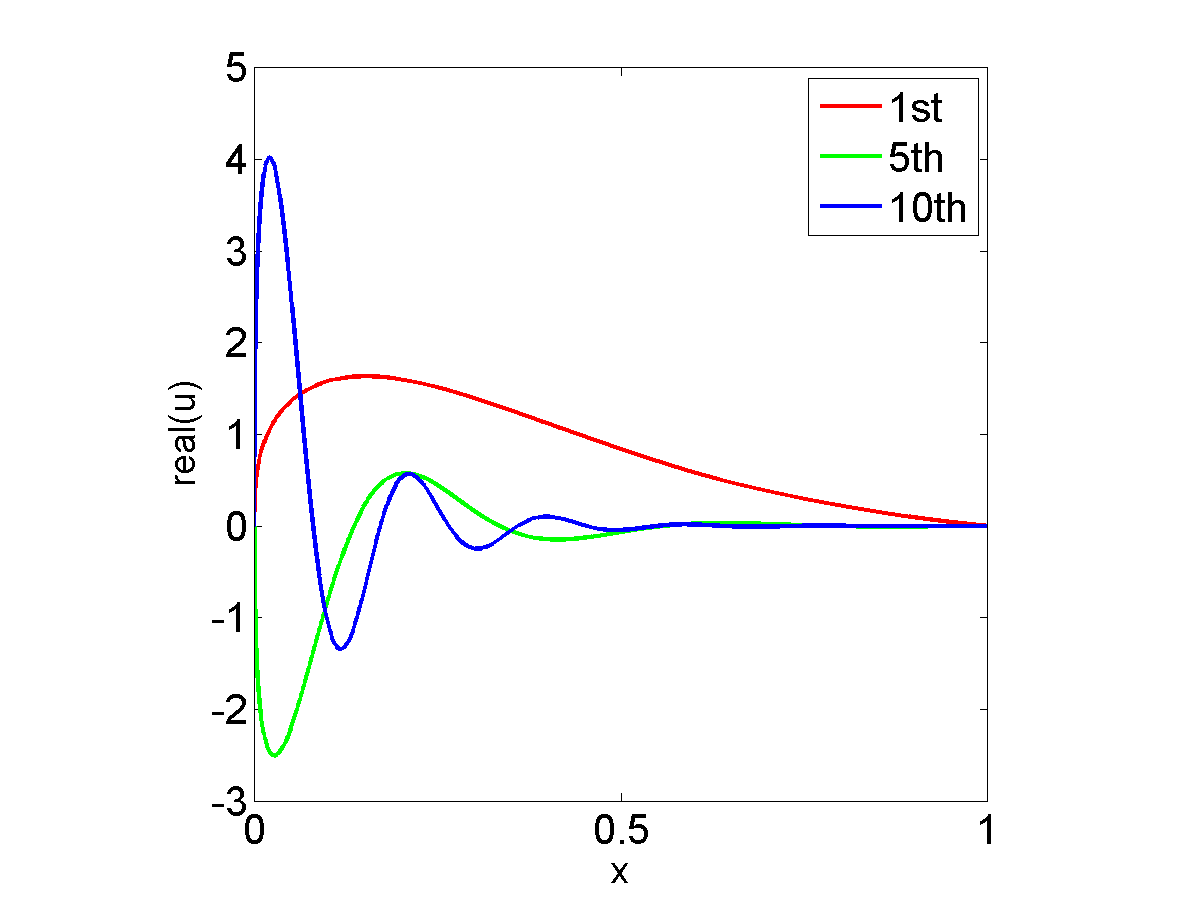} &
    \includegraphics[trim = 1cm 0cm 2cm 0cm, clip=true,width=6cm]{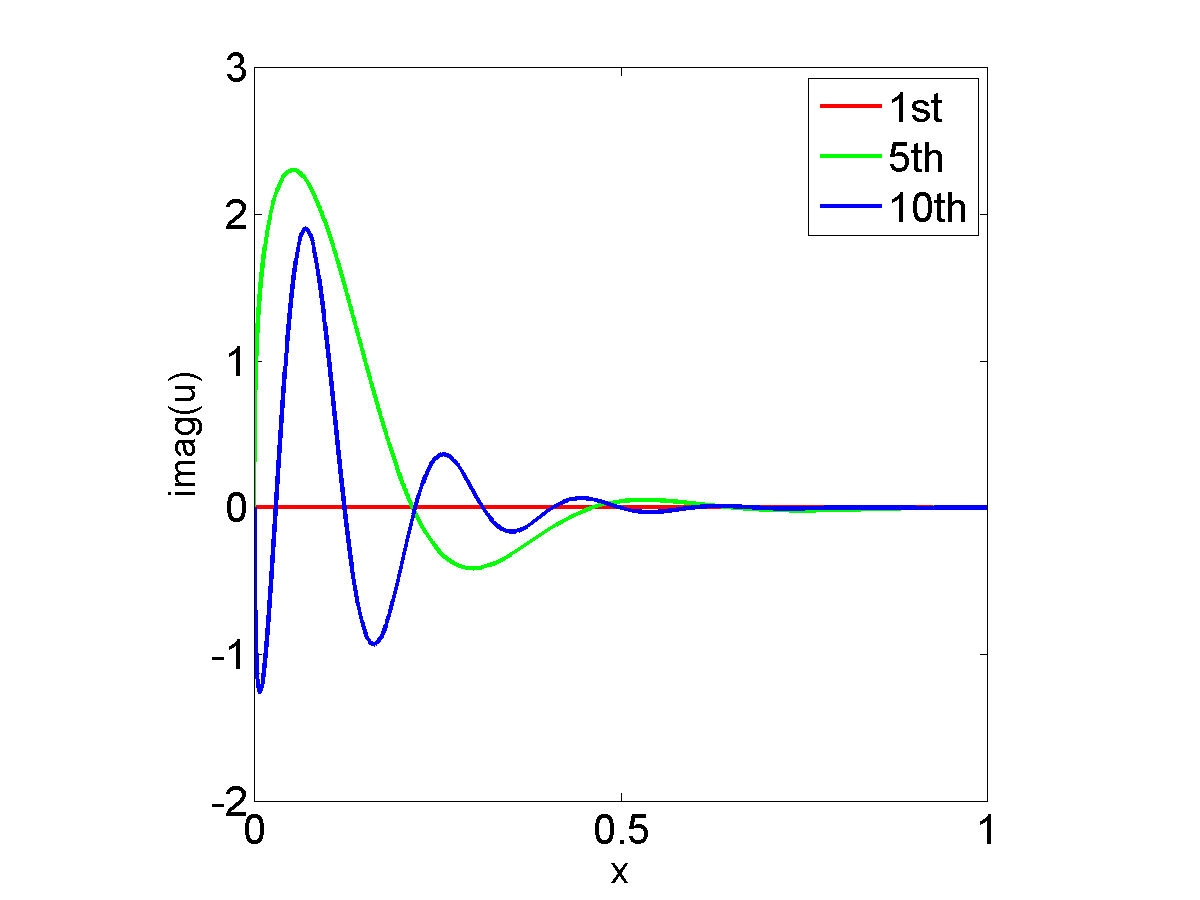}\\
    \includegraphics[trim = 1cm 0cm 2cm 0cm, clip=true,width=6cm]{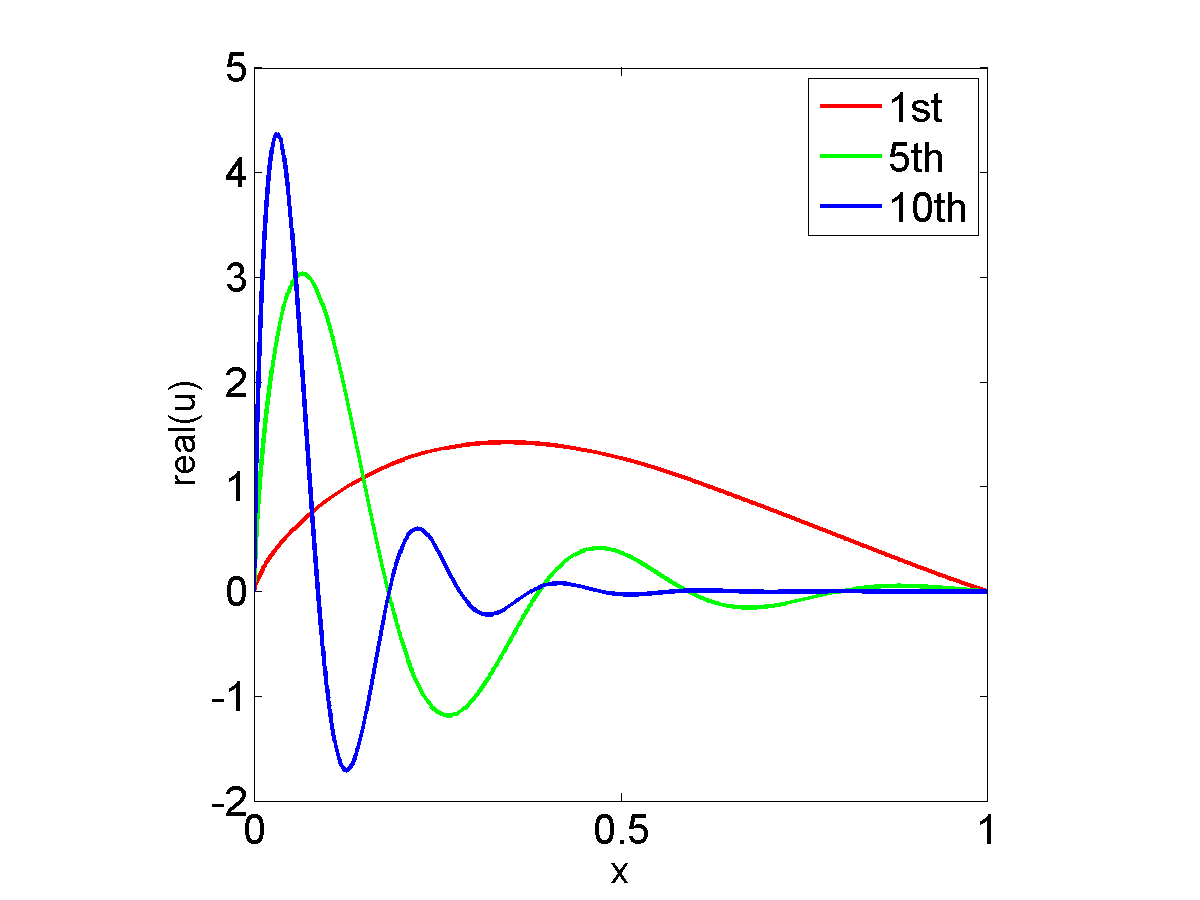} &
    \includegraphics[trim = 1cm 0cm 2cm 0cm, clip=true,width=6cm]{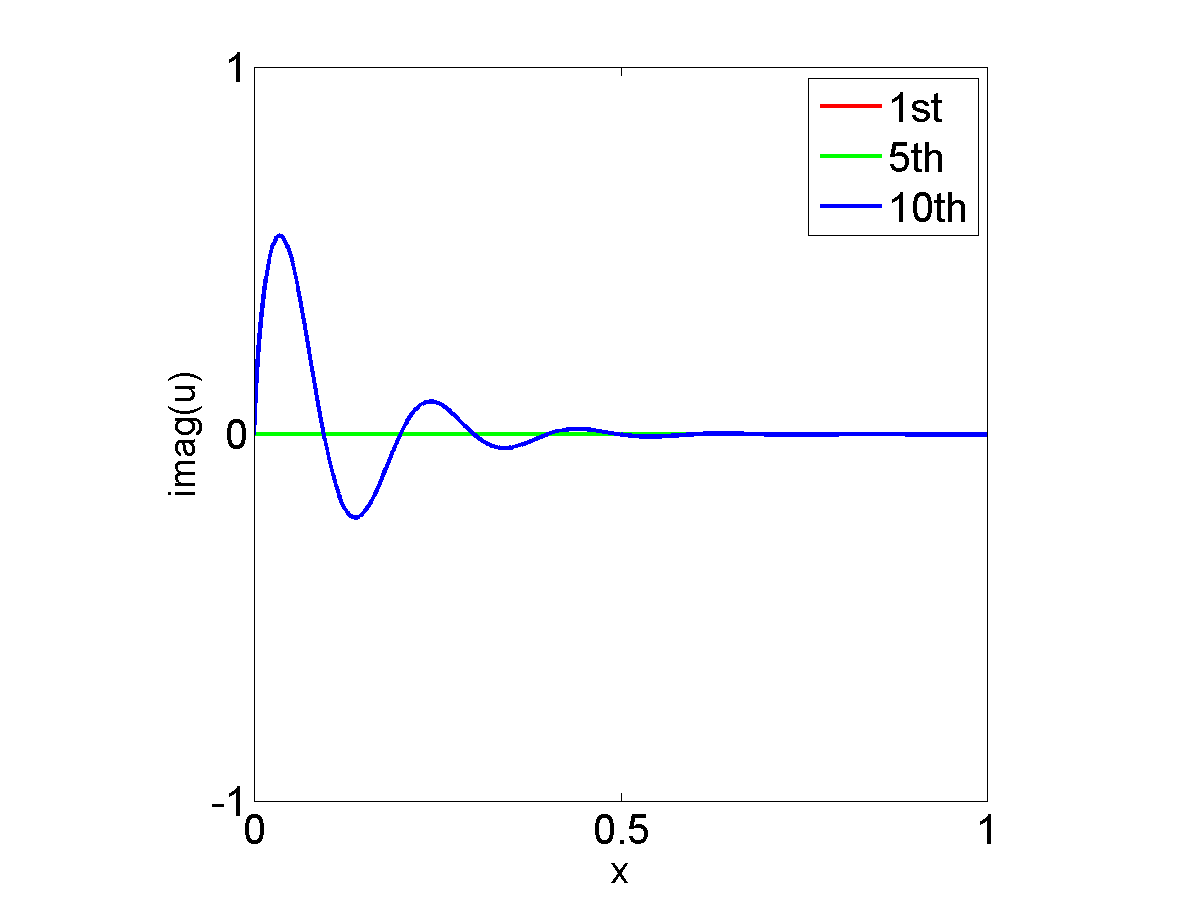}\\
    \includegraphics[trim = 1cm 0cm 2cm 0cm, clip=true,width=6cm]{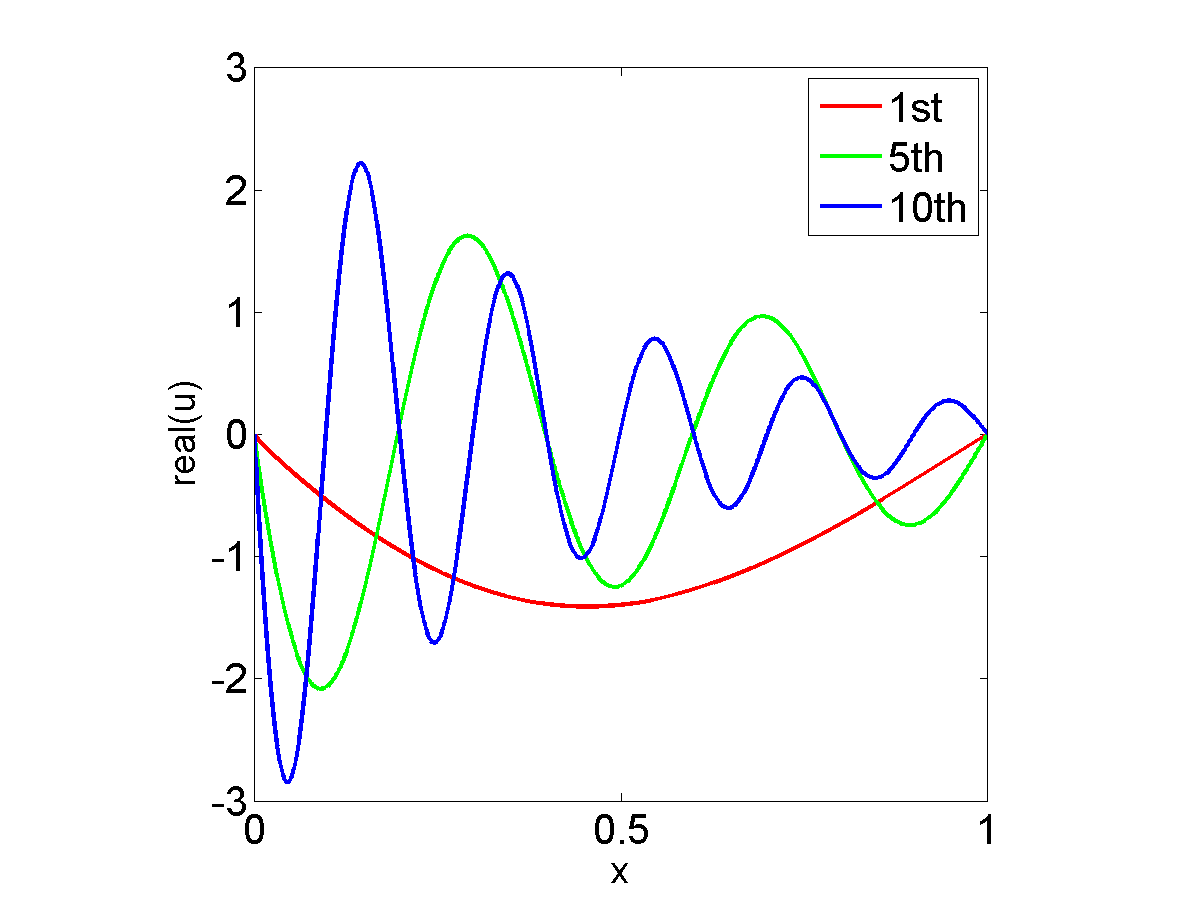} &
    \includegraphics[trim = 1cm 0cm 2cm 0cm, clip=true,width=6cm]{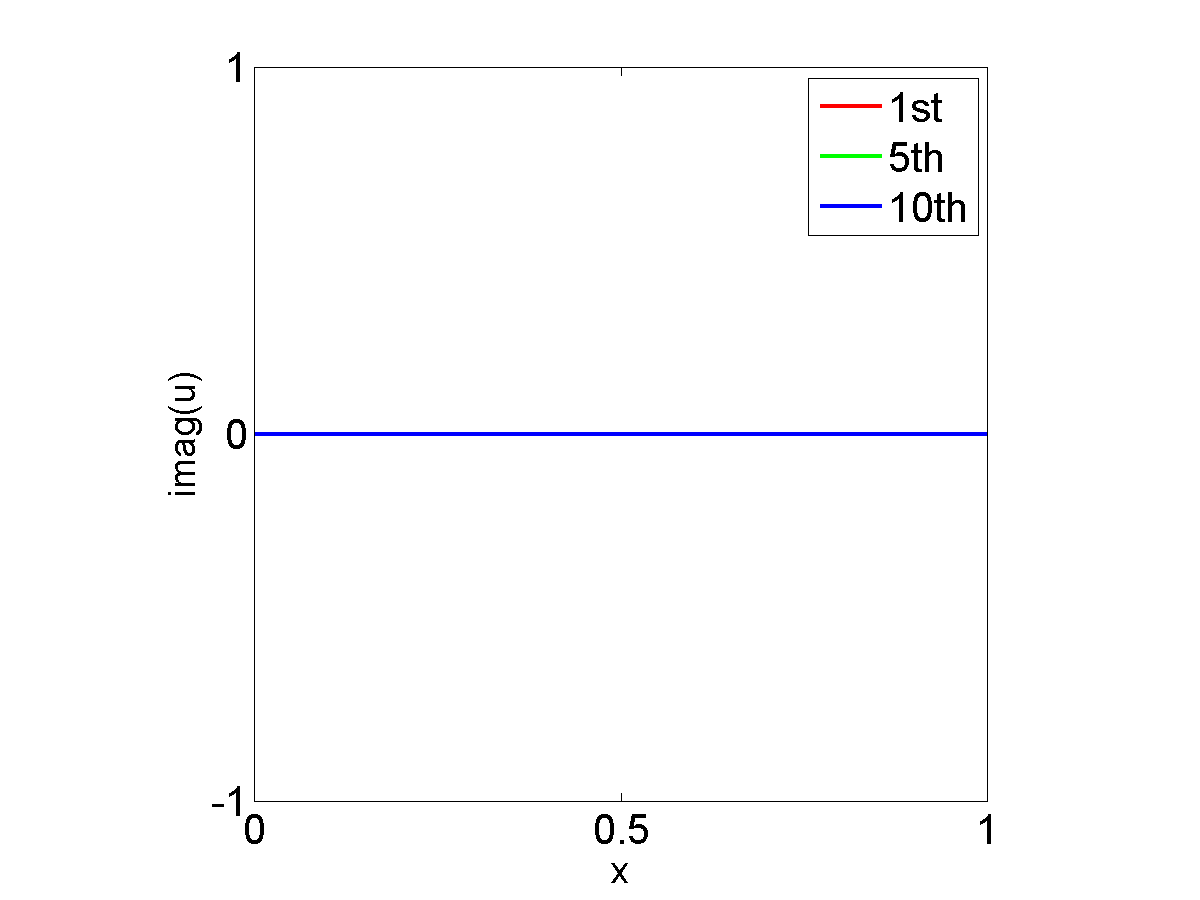}\\
     real part & imaginary part
  \end{tabular}
  \caption{The profile of the first, fifth and tenth eigenfunctions in case of $q_3$, $\alpha=4/3$
  (top row), $\alpha=5/3$ (middle row) and $\alpha=19/10$ (bottom row), Riemann-Liouville case.}\label{fig:eigfcn:riem}
\end{figure}

\subsection{Two extensions} In this part, we discuss two possible extensions of the finite element
formulation.

A first natural idea of extension is to pursue other boundary conditions, e.g., Neumann type, Robin
type or mixed type. The derivation of the variational formulation in Section \ref{sec:prelim} requires
highly nontrivial modifications for these variations. As an illustration, we make the following straightforward attempt
for the Riemann-Liouville case with mixed boundary conditions:
\begin{equation}\label{strongp:rlmixed}
  - \DDR0\alpha u + qu = f \ \ \mbox{ in } D, \qquad \DDR0{\alpha-1}u(0)=u(1)=0,
\end{equation}
where $f\in L^2\II$. Let $g={_0I_x^\alpha} f\in\Hdi0\alpha$, then $\DDR0\alpha g=f$. We thus find that
\begin{equation*}
u= -{_0I_x^\alpha} f +({_0I_x^\alpha} f)(1) x^{\alpha-2}
\end{equation*}
is a solution of \eqref{strongp:rlmixed} in the Riemann-Louiville case (for $q=0$) since
it satisfies the boundary conditions and $\DDR0\alpha x^{\alpha-2}=
(c_\alpha)^{\prime\prime}=0$. 
The representation formula is very suggestive. First, the most ``natural'' variational formulation for
\eqref{strongp:rlmixed}, i.e., find $u\in \Hdi1{\alpha/2}$ such that
\begin{equation*}
  a(u,v) = (f,v)\quad \forall v\in \Hdi1{\alpha/2},
\end{equation*}
with a bilinear form $a(u,v) = -(\DDR0{{\alpha/2}}u,\,
\DDR1{{\alpha/2}}v) + (qu,v)$, cannot be stable on the space $\Hdi1{\alpha/2}$, since in general the
solution $u$ does not admit the $\Hdi1{\alpha/2}$ regularity, due to the presence of
the singular term $x^{\alpha-2}$. Hence, alternative formulations, e.g., of Petrov-Galerkin type,
should be sought for. Second, the solution $u$ can have only very limited Sobolev regularity, and a special
treatment at the left end point is necessary. This clearly illustrates the delicacy of properly
treating boundary conditions in fractional differential equations, which has not received due
attention in the literature.

Due to the one-sidedness of the fractional derivative, one naturally
expects that a Neumann-type boundary condition on the left end point ($x=0$) would influence the
problem structure differently from that on the right end point ($x=1$). Indeed, this seems to be
generally the case. However, there is one special case for the Caputo derivative, where the
spectrum cannot even distinguish the boundary conditions. With a vanishing potential $q=0$, the eigenvalues are both
given by the zeros of the Mittag-Leffler function $E_{\alpha,1}(-\lambda)$ for either Neumann
boundary condition $u'(0)=0$ or $u'(1)=0$. To see this, let $u$ and $v$ be solution to the
following initial value problems:
\begin{equation*}
  \begin{aligned}
    -\DDC0\alpha u = \lambda u\quad \mbox{ in } \II,\ \ u(0)=0,\ \ u'(0) = 1,\\
    -\DDC0\alpha v = \lambda v\quad \mbox{ in } \II,\ \ v(0)=1,\ \ v'(0) = 0.
  \end{aligned}
\end{equation*}
Then following the construction in Section \ref{sec:prelim} (see also \cite{KilbasSrivastavaTrujillo:2006}),
the solutions $u$ and $v$ can be respectively represented by
\begin{equation*}
  u(x) = xE_{\alpha,2}(-\lambda x^\alpha)\quad \mbox{and}\quad v(x) = E_{\alpha,1}(-\lambda x^\alpha).
\end{equation*}
Now in order for $u$ to be an eigenvalue to the fractional SLP $-\DDC0\alpha u =\lambda u$, $u(0)=u'(1)=0$,
$\lambda$ must be a zero of $\frac{d}{dx}xE_{\alpha,2}(-\lambda x^\alpha)|_{x=1} = E_{\alpha,1}(-\lambda x)
|_{x=1}=E_{\alpha,1}(-\lambda)$. Similarly, for $v$ to be an eigenvalue to the fractional SLP $-\DDC0\alpha
v =\lambda v$, $v'(0)=v(1)=0$, $\lambda$ must be a zero of $E_{\alpha,1}(-\lambda)$. This shows the desired
assertion. In particular, this observation indicates the potential nonuniqueness issue for the related
inverse Sturm-Liouville problem with a zero potential: given the complete spectrum, one may not even be able to
determine the boundary condition. This is a bit surprising in view of the one-sidedness of the Caputo derivative.

Throughout we have exclusively focused our discussions on the left-sided Riemann-Liouville and Caputo derivatives.
There are several alternative choices of the spatial derivative, depending on the specific applications.
For example, one may also consider a mixed derivative $\mathbf{D}_\theta^\alpha$ defined by
\begin{equation*}
  \mathbf{D}_\theta^\alpha u = \theta\, \DDR0\alpha u + (1-\theta)\, \DDR1\alpha u\quad \mbox{or}\quad
  \mathbf{D}_\theta^\alpha u = \theta \DDC0\alpha u + (1-\theta)\DDC1\alpha u,
\end{equation*}
where $\theta\in [0,1]$ is a weight. The mixed derivative has been very popular in the mathematical modeling of
spatial fractional diffusion. However, for such mixed derivative, there seems no known variational formulation,
solution representation formula and the regularity pickup. Formally, for the fractional SLP with a mixed
Riemann-Liouville derivative and a zero Dirichlet boundary condition, one would naturally expect that the
respective weak formulation reads: find $u\in \Hd{\alpha/2}\II$ and $\lambda\in \mathbb{C}$ such that
\begin{equation*}
   \theta(\DDR0{\alpha/2} u, \, \DDR1{\alpha/2} v) + (1-\theta)(\DDR1{\alpha/2} u,\, \DDR0{\alpha/2}v) = \lambda(u,v)\quad \forall v\in \Hd{\alpha/2}\II.
\end{equation*}
However, it is still unclear whether this does represent the proper variational formulation, due to a
lack of the solution regularity, especially around the end points. Our numerical experiments
with the variational formulation indicate that the eigenfunctions have singularity only at one end
point, depending on the value of the weight $\theta$: for $\theta>1/2$, the singularity is at the left
end point, whereas for $\theta<1/2$, it is at the right end point. Due to the presence of fractional
derivatives from both end points, the presence of only one single singularity seems counterintuitive.
Nonetheless, in view of the empirically observed solution regularity, the numerical experiments do
confirm a posteriori that the variational formulation in the Riemann-Liouville case seems plausible. However,
a complete mathematical justification of the formulation is still missing. In contrast,
the case of a mixed Caputo derivative is completely unclear.

\subsection{Fractional SLP with a Riemann-Liouville derivative}

In this part, we present a preliminary numerical study of the fractional SLP with a Riemann-Liouville
derivative using the finite element method, since the Caputo case has been studied earlier in
\cite{JinRundell:2012}.

In Section 4.2, we have observed that in the Riemann-Liouville case, there is at least one real eigenvalue,
irrespective of the value of the order $\alpha$.
Then new real eigenvalues always emerge in pairs, and thus the number of real eigenvalues is always odd.
The existence of one real eigenvalue for the case $q=0$ can be rigorously established by appealing to the
Krein-Rutman theorem \cite[Theorem 19.2]{Deimling:1985}; see Appendix \ref{app:existence-real} for details.
In contrast, in the Caputo case, a real eigenvalue exists only if the fractional order $\alpha$ is
sufficiently large (with the critical value lying between $1.59$ and $1.60$, cf. \cite{JinRundell:2012}).
Generally, the structure of eigenvalues and eigenfunctions (for both fractional derivatives) is fairly
elusive. For example, it is not known that the real eigenvalues always appear before complex conjugate
pairs show up, and that the number of real eigenvalues is nondecreasing as the fractional order $\alpha$
increases, albeit both are observed in our numerical experiments.

One naturally wonders how the smallest eigenvalue $\lambda_1(\alpha)$ would vary with the fractional order
$\alpha$. It is tempting to conjecture that the real eigenvalue $\lambda_1(\alpha)$ might be monotonically
increasing in $\alpha$, in view of the observation that asymptotically, the magnitude of the eigenvalues
grows like $(2n\pi)^\alpha$. This is however only partially correct; see Fig. \ref{fig:riem:first}(a) for an illustration.
It is observed that with the increase of the $\alpha$ value, the eigenvalue $\lambda_1(\alpha)$ actually
first monotonically decreases for $\alpha$ up to $1.27$, and then it is monotonically increasing.

\begin{figure}[htp!]
  \centering
  \begin{tabular}{cc}
  \includegraphics[trim = 4cm 0cm 4cm 0cm, clip=true,width=6cm]{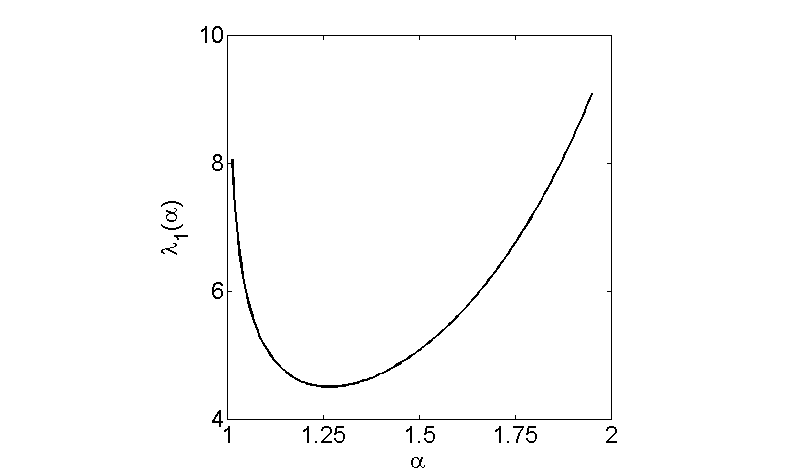}&
  \includegraphics[trim = 4cm 0cm 4cm 0cm, clip=true,width=6cm]{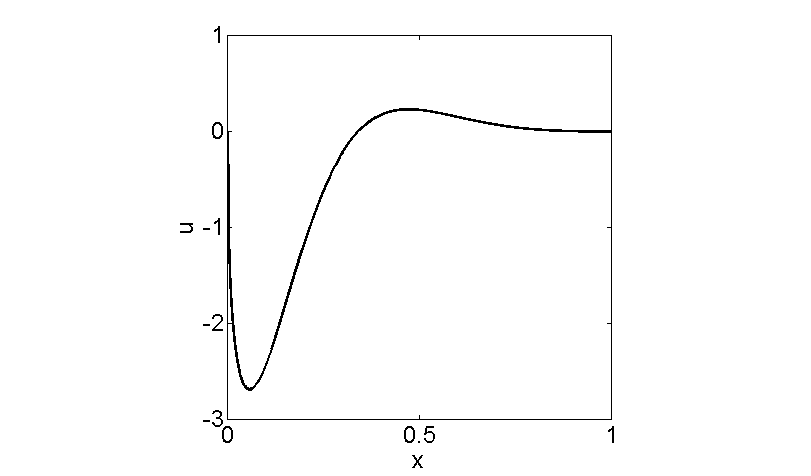}\\
  (a) & (b)
  \end{tabular}
  \caption{(a) The variation of the first eigenvalue $\lambda_1$ with $\alpha$, $q=0$, Riemann-Liouville case.
  (b) The second eigenfunction for $\alpha=1.3396$, $q=0$, Riemann-Liouville case.}\label{fig:riem:first}
\end{figure}

In case of a zero potential, the second and third real eigenvalues appear when the fractional order
$\alpha$ increases from $1.3395$ to $1.3396$, where the complex conjugate pair $19.379372\pm0.170620
\mathrm{i}$ splits into two real eigenvalues $19.283648$ and $19.482320$. Further refinement indicates
these two real eigenvalues are genuinely simple, and with the second eigenfunction has one interior
zero, and the third eigenfunction has two interior zeros, with the second zero located at 0.993, i.e.,
they are linearly independent. The second eigenfunction is shown in Fig. \ref{fig:riem:first}(b), and
the third one is graphically indistinguishable  from the second one. Hence for all practical purposes,
the third eigenfunction does not provide any new information relative to the second one. Further, these two
eigenfunctions are fairly close to each other, and thus the usual interval condition is not valid: actually
the third interior zero of the real part of the next (complex) eigenfunction is located at $0.791$, which
is much smaller than $0.993$. We note that as the order $\alpha$ increases from $1.3395$ to $1.3396$, the
rest of eigenvalues remains fairly stable. It is unclear whether the eigenvalue(s) at bifurcation point
is geometrically/algebraically simple. Naturally, the bifurcation is not stable under the perturbation
of a potential term. For example, in the presence of the potential $q_2$ and at $\alpha=1.3395$, the complex
conjugate pair splits into  $19.085265$ and $20.791554$, and the eigenfunctions are noticeably different.
Such bifurcation behavior can affect greatly the convergence of relevant numerical schemes. For example, in
the frozen Newton method for the inverse Sturm-Liouville problem at the bifurcation point, an inadvertent
choice of the frozen Jacobian at $q=0$ can lead to nonconvergence.

In summary, the behavior of the fractional SLP is fairly intricate. Hence, not surprisingly, it is
very difficult to obtain analytical results. The finite element method developed in this paper provides
an invaluable tool for numerically investigating various ``conjectures'' on the analytical properties.
\section{Concluding remarks}
We have developed a finite element method for fractional Sturm-Liouville problems involving either
the Caputo or Riemann-Liouville derivatives. It is based on novel variational formulations
for fractional differential operators, and rigorous (but suboptimal) error bounds are provided for
the approximate eigenvalues. Numerically, it is observed that the method converges at a second-order
rate for both fractional derivatives, and can provide accurate estimates of multiple eigenvalues in
the presence of either a smooth or nonsmooth potential term. Further, some properties
of the eigenvalues and eigenfunctions are numerically studied.

This work represents only a first step towards rigorous numerics for fractional Sturm-Liouville
problems. There are many possible extensions of the proposed method. First, the case of mixed
left-sided and right-sided Caputo/Riemann-Liouville fractional derivatives occurs often in
practice. However, the proper variational formulation and solution theory, especially regularity
pickup, for such models are still unclear. Second, this work is exclusively concerned with Dirichlet
eigenvalues. It is natural to pursue other boundary conditions, e.g., Neumann or Robin type boundary
conditions. Third, a complete theoretical justification of the superior empirical performances of
the finite element method is of immense interest.

\section*{Acknowledgments}
The research of B. Jin and W. Rundell has been supported by NSF Grant DMS-1319052, R. Lazarov
was supported in parts by NSF Grant DMS-1016525, and J. Pasciak has been supported by NSF Grant
DMS-1216551. The work of all authors has been  supported in parts also  by Award No.
KUS-C1-016-04, made by King Abdullah University of Science and Technology (KAUST).

\appendix
\section{Existence of a real eigenvalue eigenvalue}\label{app:existence-real}

In this appendix, we show that the lowest Dirichlet eigenvalue in the case of a Riemann-Liouville
derivative, with a zero potential, is always positive. To this end, we consider
the solution operator $T: C_0\II\to C_0\II$, $f\to Tf$, with $Tf$ defined by
\begin{equation*}
  Tf = ({_0I_x^\alpha}f)(1)x^{\alpha-1} - {_0I_x^\alpha}f(x).
\end{equation*}
By Theorem \ref{thm:regrl}, the operator $T: C_0\II\to C_0\II$ is compact. Let $K$ be the
set of nonnegative functions in $C_0\II$. Next we show that the operator $T$ is positive on $K$.
Let $f\in C_0\II$, and $f\geq0$. Then
\begin{equation*}
  \begin{aligned}
    Tf(x) =& \frac{1}{\Gamma(\alpha)}\int_0^1(1-t)^{\alpha-1}f(t)dtx^{\alpha-1} - \frac{1}{\Gamma(\alpha)}\int_0^x (x-t)^{\alpha-1}f(t)dt\\
       = & \frac{1}{\Gamma(\alpha)}\int_x^1 (1-t)^{\alpha-1}f(t)dtx^{\alpha-1} + \frac{1}{\Gamma(\alpha)}\int_0^x ((x-xt)^{\alpha-1}-(x-t)^{\alpha-1})f(t)dt.
  \end{aligned}
\end{equation*}
Clearly, for any $x\in D$, the first integral is nonnegative. Similarly, $(x-xt)^{\alpha-1}>
(x-t)^{\alpha-1}$ holds for all $t\in (0,x)$, and thus the second integral is also nonnegative.
Hence, $Tf\in K$, i.e., the operator $T$ is positive. Now it follows directly from the Krein-Rutman theorem
\cite[Theorem 19.2]{Deimling:1985} that the spectral radius of $T$ is an eigenvalue of $T$, and
an eigenfunction $u\in K\setminus\{0\}$.

\bibliographystyle{abbrv}
\bibliography{frac}

\begin{thebibliography}{10}

\bibitem{AdamsFournier:2003}
R.~A. Adams and J.~J.~F. Fournier.
\newblock {\em Sobolev {S}paces}.
\newblock Elsevier/Academic Press, Amsterdam, second edition, 2003.

\bibitem{AlMdallal:2009}
Q.~M. {Al-Mdallal}.
\newblock An efficient method for solving fractional {S}turm-{L}iouville
  problems.
\newblock {\em Chaos, Solitons \& Fractals}, 40(1):183--189, 2009.

\bibitem{Babuska-Osborn}
I.~Babu{\v{s}}ka and J.~Osborn.
\newblock Eigenvalue problems.
\newblock In {\em Handbook of {N}umerical {A}nalysis, {V}ol.\ {II}}, pages
  641--787. North-Holland, Amsterdam, 1991.

\bibitem{BensonWheatcraftMeerschaert:2000}
D.~A. Benson, S.~W. Wheatcraft, and M.~M. Meerschaert.
\newblock The fractional-order governing equation of {L}\'{e}vy motion.
\newblock {\em Water Resour. Res.}, 36(6):1413--1424, 2000.

\bibitem{ChadanColton:1997}
K.~Chadan, D.~Colton, L.~P\"aiv\"arinta, and W.~Rundell.
\newblock {\em An Introduction to Inverse Scattering and Inverse Spectral
  Problems}.
\newblock SIAM, Philadelphia, 1997.

\bibitem{Deimling:1985}
K.~Deimling.
\newblock {\em Nonlinear {F}unctional {A}nalysis}.
\newblock Springer, Berlin, 1985.

\bibitem{Djrbashian:1993}
M.~M. Djrbashian.
\newblock {\em Harmonic {A}nalysis and {B}oundary {V}alue {P}roblems in the
  {C}omplex {D}omain}.
\newblock Birkh\"auser, Basel, 1993.

\bibitem{Dunford-Schwartz}
N.~Dunford and J.~T. Schwartz.
\newblock {\em Linear {O}perators. {I}. {G}eneral {T}heory}.
\newblock Interscience Publishers, Inc., New York, 1958.

\bibitem{Dzrbasjan:1970}
M.~M. D{\v{z}}rba{\v{s}}jan.
\newblock A boundary value problem for a {S}turm-{L}iouville type differential
  operator of fractional order.
\newblock {\em Izv. Akad. Nauk Armjan. SSR Ser. Mat.}, 5(2):71--96, 1970.

\bibitem{ErvinRoop:2006}
V.~J. Ervin and J.~Roop.
\newblock Variational formulation for the stationary fractional advection
  dispersion equation.
\newblock {\em Numer. Methods Partial Diff. Eq.}, 22(3):558--576, 2006.

\bibitem{JinLazarovPasciak:2013a}
B.~Jin, R.~Lazarov, and J.~Pasciak.
\newblock Variational formulation of problems involving fractional order
  differential operators.
\newblock preprint, 2013.

\bibitem{JinRundell:2012}
B.~Jin and W.~Rundell.
\newblock An inverse {S}turm-{L}iouville problem with a fractional derivative.
\newblock {\em J. Comput. Phys.}, 231(14):4954--4966, 2012.

\bibitem{KilbasSrivastavaTrujillo:2006}
A.~A. Kilbas, H.~M. Srivastava, and J.~J. Trujillo.
\newblock {\em Theory and {A}pplications of {F}ractional {D}ifferential
  {E}quations}.
\newblock Elsevier, Amsterdam, 2006.

\bibitem{MetzlerKlafter:2000}
R.~Metzler and J.~Klafter.
\newblock The random walk's guide to anomalous diffusion: a fractional dynamics
  approach.
\newblock {\em Phys. Rep.}, 339(1):1--77, 2000.

\bibitem{Nahusev:1977}
A.~M. Nahu{\v{s}}ev.
\newblock The {S}turm-{L}iouville problem for a second order ordinary
  differential equation with fractional derivatives in the lower terms.
\newblock {\em Dokl. Akad. Nauk SSSR}, 234(2):308--311, 1977.

\bibitem{Podlubny_book}
I.~Podlubny.
\newblock {\em Fractional {D}ifferential {E}quations}.
\newblock Academic Press, San Diego, CA, 1999.

\bibitem{Sedletskii:2000}
A.~M. Sedletski{\u\i}.
\newblock On the zeros of a function of {M}ittag-{L}effler type.
\newblock {\em Mat. Zametki}, 68(5):710--724, 2000.

\bibitem{Seybold:2008}
H.~Seybold and R.~Hilfer.
\newblock Numerical algorithm for calculating the generalized
  {M}ittag-{L}effler function.
\newblock {\em SIAM J. Numer. Anal.}, 47(1):69--88, 2008/09.

\end{thebibliography}

\end{document}